\newtheorem{theorem}{Theorem}[section]
\newtheorem{corollary}[theorem]{Corollary}
\newtheorem{main}{Main Theorem}
\newtheorem*{main*}{Main Theorem}
\newtheorem{lemma}[theorem]{Lemma}
\newtheorem{proposition}[theorem]{Proposition}
\theoremstyle{definition}
\newtheorem{definition}[theorem]{Definition}
\newtheorem{remark}[theorem]{Remark}
\title[Running heading with forty characters or less]
      {Higher rank rigidity for Berwald spaces}
\author[]{Weisheng Wu}
\subjclass{}
 \keywords{Higher rank rigidity, geodesic flow, Berwald spaces, Finsler spaces, nonpositive flag curvature}
\address{Department of Applied Mathematics, College of Science, China Agricultural University, Beijing, 100083, P.R. China}
 \email{wuweisheng@cau.edu.cn}
\begin{document}
\maketitle
\markboth{Higher rank rigidity for Berwald spaces}
{Weisheng Wu}
\renewcommand{\sectionmark}[1]{}

\begin{abstract}
We generalize the higher rank rigidity theorem to a class of Finsler spaces, i.e. Berwald spaces. More precisely, we prove that a complete connected Berwald space of finite volume and bounded nonpositive flag curvature with rank at least $2$ whose universal cover is irreducible, is a locally symmetric space or a locally Minkowski space.
\end{abstract}

\section{Introduction}
In 1980's, based on the work in Ballmann-Brin-Eberlein \cite{BBE} and Ballmann-Brin-Spatzier \cite{BBS}, Ballmann and Burns-Spatzier proved the following higher rank rigidity theorem independently in \cite{Ba} and \cite{BS2}.

\begin{theorem} [Cf. \cite{Ba}, \cite{BS2}]
Let $M$ be a complete connected Riemannian manifold of finite volume and bounded nonpositive sectional curvature. Then the universal cover $\widetilde{M}$ of $M$ is a flat Euclidean space, a symmetric space of noncompact type, a space of rank 1 or a product of the above types.
\end{theorem}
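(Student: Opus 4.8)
The plan is to follow the approach of \cite{BBE}, \cite{BBS}, \cite{Ba} and \cite{BS2}. First I would reduce to the irreducible case. By the de~Rham decomposition theorem the universal cover splits isometrically as $\tilde{M} \cong \mathbf{R}^{k} \times M_1 \times \cdots \times M_\ell$ with each $M_i$ complete, simply connected, irreducible, of nonpositive curvature and not flat. A standard reduction --- using that the de~Rham factors are permuted by $\pi_1(M)$ up to a finite-index subgroup, and that finite volume of $M$ makes the geodesic flow on the unit tangent bundle $SM$ recurrent by the Poincar\'e recurrence theorem --- shows it suffices to prove that each irreducible non-flat factor $N$ is either a rank-one manifold or a Riemannian symmetric space of noncompact type. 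Since the rank-one alternative already appears in the stated list, it remains to treat an irreducible $N$ with $r := \operatorname{rank}(N) \geq 2$ and to show that $N$ is symmetric of noncompact type.

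Recall that the rank of a geodesic $\gamma$ is the dimension of the space of parallel Jacobi fields along $\gamma$, and that the set $\mathcal R$ of unit vectors whose geodesics have rank exactly $r$ is open, dense, and, by a measure-theoretic argument combined with the recurrence, of full measure. The crucial geometric step is to promote the parallel Jacobi fields along a recurrent regular geodesic to a genuine $r$-dimensional flat: using recurrence under the geodesic flow together with the flat strip lemma, one shows that through each vector of $\mathcal R$ there passes a unique totally geodesic, isometrically embedded flat $F \cong \mathbf{R}^{r}$, and that the assignment $v \mapsto F_v$ is continuous on $\mathcal R$. I expect this to be the main obstacle: it requires delicate limiting arguments, in particular convergence of flat strips of bounded width as the geodesic returns near itself, and uniform control on how the parallel Jacobi fields vary along nearby regular geodesics.

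With flats available through all regular geodesics, I would transport the structure to the sphere at infinity $N(\infty)$ equipped with its Tits (angular) metric. The boundaries $\partial_\infty F$ of the $r$-flats form a family of $(r-1)$-spheres, the regular/singular stratification of geodesics yields a wall-and-chamber decomposition, and one verifies the axioms of a thick, irreducible, topological spherical building of rank $r \geq 2$ on $N(\infty)$, on which $\pi_1(M)$ acts by automorphisms. Finally, by Tits' classification of irreducible spherical buildings of rank at least $2$, combined with the Burns--Spatzier argument reconstructing an algebraic (hence symmetric-space) structure from the topological building together with the nonpositively curved geometry, one concludes that $N$ is a Riemannian symmetric space of noncompact type. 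Reassembling the de~Rham factors then produces exactly the stated list: a flat Euclidean space, a symmetric space of noncompact type, a rank-one space, or a product of these.
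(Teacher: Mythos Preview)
This theorem is stated in the paper as a known result from the literature (due independently to Ballmann \cite{Ba} and Burns--Spatzier \cite{BS2}) and is not proved there; the paper's own contribution is to adapt the scheme of \cite{BBE}, \cite{BBS}, \cite{BS2} to Berwald spaces. Your outline follows the Burns--Spatzier route --- de Rham reduction to the irreducible case, construction of $k$-flats through regular recurrent vectors via the flat strip lemma, Weyl chambers, the spherical Tits building on $\tilde M(\infty)$, and the classification of topological buildings --- which is exactly the strategy the paper carries out in Section~4 for its Main Theorem on Berwald spaces.
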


If we assume that $\widetilde{M}$ is irreducible, i.e. not a Riemannian product of other two Riemannian manifolds of lower dimensions, we have:

\begin{theorem}[Cf. \cite{Ba}, \cite{BS2}]
Let $M$ be a complete connected Riemannian manifold of finite volume and bounded nonpositive sectional curvature with rank at least $2$, whose universal cover $\widetilde{M}$ is irreducible. Then $M$ is locally symmetric.
\end{theorem}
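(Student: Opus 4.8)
\emph{Proof sketch.} The quickest route is to deduce this from the structure theorem stated above: its hypotheses are exactly finite volume and bounded nonpositive sectional curvature, so it applies and exhibits $\tilde M$ as a flat Euclidean space, a symmetric space of noncompact type, a space of rank $1$, or a product of these. Irreducibility rules out every nontrivial product; it also rules out a flat space of dimension $\ge 2$, since $\mathbb{R}^n\cong\mathbb{R}\times\mathbb{R}^{n-1}$ is such a product, while a flat space of dimension $\le 1$ has rank $\le 1$; and the rank hypothesis rules out a space of rank $1$. What remains is that $\tilde M$ is a symmetric space of noncompact type, whence $M=\tilde M/\pi_1(M)$ is locally symmetric.

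Since the purpose of this paper is to rebuild this circle of ideas for Berwald spaces, let me indicate how I would prove the substantive input — the structure theorem — from which the irreducible form then follows as above. The plan is to study the geodesic flow on the unit tangent bundle $SM$: finite volume and nonpositive curvature make the Liouville measure finite and the flow measure preserving, so the nonwandering set has full measure, and via a duality argument (valid because $\pi_1(M)$ has finite covolume) this promotes geometric conclusions from a full-measure set to a dense $\pi_1(M)$-invariant subset of $S\tilde M$. One defines the rank of a geodesic as the dimension of its space of parallel Jacobi fields, sets $k=\operatorname{rank}(M)$ equal to the minimum over all geodesics, and calls a unit vector \emph{regular} if its geodesic has rank exactly $k$ and \emph{singular} otherwise. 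The first block of work is the local theory of flats: prove a flat strip lemma (two bi-asymptotic geodesics bound a flat strip, using convexity of $t\mapsto d(c_1(t),c_2(t))$), deduce that the regular set is open and dense, and then show that each regular vector lies in a unique $k$-flat, that these flats vary continuously, and — by taking limits along the flow and using density — that a $k$-flat passes through \emph{every} vector.

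With this abundance of flats, the plan is to pass to the boundary sphere $\tilde M(\infty)$ with its Tits metric. Each $k$-flat $F$ contributes an apartment $\partial_\infty F\cong S^{k-1}$; singular directions cut these spheres into Weyl chambers and walls; verifying the incidence axioms exhibits $\tilde M(\infty)$ as a topological spherical building of rank $k\ge 2$, irreducible since $\tilde M$ is. The hard final step is the Burns--Spatzier recognition theorem \cite{BS2}: such an irreducible topological spherical building of rank $\ge 2$ is the building of a real semisimple Lie group $G$ of rank $k$, which rests on Tits's classification of thick irreducible buildings together with ruling out the degenerate case in which the building is a single apartment — precisely the case that would force $\tilde M$ to split. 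One then transfers the $G$-action to $\tilde M$, identifies $\tilde M$ with $G/K$, and concludes that $M$ is locally symmetric. I expect this recognition step — manufacturing the algebraic structure on $\tilde M(\infty)$ and invoking the classification — to be the principal obstacle. For the Berwald generalization the additional care is concentrated in the flat strip lemma, the notion of angle, and the stable and unstable horospherical manifolds of the geodesic flow, where Riemannian comparison must be replaced by flag-curvature comparison, while the linearity of a Berwald connection — shared, by Szab\'o's theorem, with an auxiliary Riemannian metric — keeps parallel transport and the geometry of flats under control.
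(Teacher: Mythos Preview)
Your proposal is correct and matches the paper's approach. Note that the paper does not give an independent proof of this Riemannian statement---it is cited from \cite{Ba} and \cite{BS2}---but the paper's proof of its Main Theorem (the Berwald generalization, which specializes to the Riemannian case) follows precisely the strategy you outline in your second and third paragraphs: the flat strip lemma and convexity of distance, regular vectors and the existence of $k$-flats through every vector, Weyl chambers and their isometry, the spherical Tits building on $\tilde M(\infty)$, and finally the Burns--Spatzier classification \cite{BS1,BS2} identifying $\tilde M$ with a symmetric space $G/K$.
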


Our goal is to generalize the above higher rank rigidity theorem from Riemannian manifolds to a broader class, i.e. Finsler manifolds. A Finsler structure is given by a smooth family of Minkowski norms defined on each tangent space. Tensors such as the fundamental tensor, are defined on $TM$ instead of $M$, hence not only depend on the position $x$ on $M$, but also depend on the direction $y$ in the tangent space. Let $\pi:TM \to M$ be the natural projection. The pulled-back bundle $\pi^* TM$ admits a unique linear connection $\bigtriangledown$ which satisfies torsion freeness and almost $g$-compatibility, called Chern connection. The curvature $2$-form of Chern connection consists of two parts: $hh$- and $hv$-curvature parts. The flag curvature is then defined via the $hh$-curvature tensors, which coincides with the sectional curvature if the Finsler manifold is Riemannian. See Section $2$ for details. We want to prove the higher rank rigidity theorem for Finsler spaces of nonpositive flag curvature following the scheme in \cite{BBE}, \cite{BBS}, and \cite{BS2}. However, there are some technical difficulties we cannot overcome at this time, such as no  flat strip lemma, and no proper angle notion for general Finsler spaces.

In this paper we restrict to Berwald spaces, a special class of Finsler spaces larger than the class of Riemannian manifolds. In Berwald spaces, the Chern connection coefficients $\Gamma_{jk}^i$ have no $y$-dependence (See Section $2.4$). It follows that $hv$-curvature vanishes, and the behavior of the geodesics is well controlled by the nonpositivity of the flag curvature. More precisely, on the universal cover $\widetilde{M}$ of a Berwald space $M$, the nonpositivity of the flag curvature implies the convexity in $t$ of the distance function $d(\alpha(t), \beta(t)))$ between two geodesics $\alpha$ and $\beta$ (see Proposition \ref{distancefunction} below). Based on this convexity, we prove a flat strip lemma (see Lemma \ref{flatstripBerwald}), and thus the higher rank implies a great deal of flats. Then we construct the stable and unstable manifolds for the geodesic flows.  At last, we are able to construct Weyl Chambers and Tits building in the sphere at infinity for the universal cover of Berwald spaces with nonpositive flag curvature. To this end, we need overcome some difficulties causing by the Berwald metric. We define a notion of angle using the convexity (see Definition \ref{angledefinition}), which in turn defines a metric on $\widetilde{M}(\infty)$, the sphere at infinity. Furthermore, we obtain a type of coarse estimation on distance functions to deal with the issue of reference vector (see Proposition \ref{uniform}).

Therefore, we can go through all steps in \cite{BBE}, \cite{BBS}, and \cite{BS2}, and prove:

\begin{main}\label{higherrank}
Let $(M,F)$ be a complete connected Berwald space of finite volume and bounded nonpositive flag curvature with rank at least $2$, whose universal cover $\widetilde{M}$ is irreducible. Then $(M,F)$ is a locally symmetric space or a locally Minkowski space.
\end{main}




We note that our main theorem also deals with the irreversible case. A locally symmetric Finsler space is reversible unless it is an irreversible locally Minkowski space. It follows that a complete connected irreversible Berwald space of finite volume and bounded nonpositive flag curvature with rank at least $2$, whose universal cover $\widetilde{M}$ is irreducible, must be an (irreversible) locally Minkowski space.

Our proof of Main Theorem \ref{higherrank} in this paper is based on the study of the global properties of Berwald spaces with nonpositive flag curvature. We give a clear dynamical picture of the geodesic flow, and reveals a rigidity phenomenon that dynamical behaviour of the geodesic flow remains the same even in a broader geometric context.

The paper is organized as follows. In Section 2, we present some preliminaries on Finsler spaces and Berwald spaces, particularly on flag curvature and Jacobi fields. The key properties of Berwald spaces of nonpositive flag curvature are studied in Section 3. In Section 4, we prove Main Theorem \ref{higherrank} by adapting the methods in \cite{BBE}, \cite{BBS}, \cite{BS2}.

\section{Preliminaries on Finsler spaces}

In this section, we give definitions and present some basic results first on Finsler spaces, then on Berwald spaces. We will adapt the notations in \cite{BCS}.

\subsection{The Chern connection and flag curvature}
\begin{definition}
A Minkowski norm on an $n$-dimensional real vector space is a function $F:V\to [0,\infty)$ with the following properties:
\begin{enumerate}
\item $F$ is smooth on $V-\{0\}$;
\item $F(\lambda y)=\lambda F(y)$, $\forall \lambda >0$;
\item The Hessian matrix
$$(g_{ij}):=([\frac{1}{2}F^2]_{y^iy^j})$$
is positive definite at every point of $V-\{0\}$.
\end{enumerate}
\end{definition}

The simplest example of Minkowski norm is a Euclidean norm $F(y):=\sqrt{\langle y,y\rangle}, y\in \mathbb{R}^n$ where $\langle\cdot, \cdot\rangle$ denotes the canonical inner product. There are other examples, such as
$$F(y^1, y^2)=\sqrt{\sqrt{(y^1)^4+(y^2)^4}+\lambda((y^1)^2+(y^2)^2)},\quad y^1, y^2\in \mathbb{R}$$
where $\lambda$ can be any positive number.

A Finsler manifold is a manifold with a smoothly varying family of Minkowski norms, one on each tangent space. We give a precise definition of Finsler manifolds.

\begin{definition}
A Finsler manifold is a manifold $M$ with a $F:TM\rightarrow[0,\infty)$ (Finsler metric) such that:
\begin{enumerate}

\item $F$ is smooth on $TM\setminus\{0\}$;
\item $F\mid _{T_xM}: T_xM \rightarrow [0,\infty)$ is a Minkowski norm for all $x\in M$.
\end{enumerate}

\end{definition}

We recall some notions of tensors for a Finsler manifold $M$. In general, they are defined on the tangent bundle $TM$ instead of $M$, as distinct from the case for Riemannian manifolds. There is a natural coordinate system $(x^1,\cdots,x^n,y^1,\cdots,y^n)$ on $TM$, where $(x^1,\cdots,x^n)$ is a local coordinate system on $M$, and for any $y\in T_xM$, $y=y^j\frac{\partial}{\partial x^j}$. Here we list some tensors and quantities which will be useful later. We use Einstein summation convention throughout this paper.

\begin{enumerate}

\item The fundamental tensor $g=g_{ij}dx^i\otimes dx^j$ where:
$$g_{ij}(x,y)=(\frac{1}{2}F^2)_{y^iy^j}.$$

\item The Cartan tensor $C=C_{ijk}dx^i\otimes dx^j\otimes dx^k$ where:
$$C_{ijk}(x,y)=(\frac{1}{4}F^2)_{y^iy^jy^k}.$$
The Cartan tensor vanishes for Riemannian manifolds.

\item The formal Christofell symbols of the second type are
$$\gamma _{jk}^i(x,y)=\frac{1}{2}g^{is}(\frac{\partial g_{sj}}{\partial x^k}-\frac{\partial g_{jk}}{\partial x^s}+\frac{g_{ks}}{\partial x^j}),$$
where $(g^{ij})$ is the inverse matrix of $(g_{ij})$.

\item The nonlinear connection is the quantities
$$N_j^i(x,y):= \gamma_{jk}^iy^k-g^{il}C_{ljk}\gamma_{rs}^k y^ry^s,$$
which provides us a splitting of second tangent bundle $TTM$ into a horizontal subspace $H(TM)$ spanned by $\{\frac{\delta}{\delta x^j} \}$
and a vertical subspace $V(TM)$ spanned by $\{\frac{\partial}{\partial y^j}\}$, where:
$$\frac{\delta}{\delta x^j}:=\frac{\partial}{\partial x^j}-N_j^i\frac{\partial}{\partial y^i}.$$
We can also define the corresponding 1-forms $\{dx^i\}$ and $\{\delta y^i\}$ where:
$$ \delta y^i:=dy^i+N_j^i dx^j.$$

\end{enumerate}

The pulled-back tangent bundle $\pi ^{\ast}TM$ is induced by the projection $\pi: TM\to M$. It is a vector bundle over the slit tangent bundle $TM\setminus\{0\}$, with fiber over a typical point $(x,y)$ a copy of $T_xM$. The fundamental tensor and Cartan tensor are symmetric sections of $\pi^{\ast}T^{\ast}M\otimes \pi^{\ast}T^{\ast}M$ and $\otimes^3\pi^{\ast}T^{\ast}M$ respectively. Now the Chern connection is defined on $\pi^* TM$:

\begin{theorem}(Chern, cf. \cite{BCS})
The pulled-back bundle $\pi^* TM$ admits a unique linear connection $\bigtriangledown$ which satisfies torsion freeness and almost $g$-compatibility. In the natural coordinates,
$$\bigtriangledown _v \frac{\partial}{\partial x^j}:=\omega _j^i(v)\frac{\partial}{\partial x^i},$$
$$\omega _j^i =\Gamma_{jk}^i dx^k \ \text{\ and\ \ } \Gamma_{jk}^i=\Gamma_{kj}^i (\text{\ torsion freeness}),$$
and
$$ \Gamma_{jk}^l= \gamma_{jk}^{l}-g^{li}(C_{ijs}N_k^s-C_{jks}N_i^s+C_{kis}N_j^s)\ (\text{almost $g$-compatibility}).$$
\end{theorem}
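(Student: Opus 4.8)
The plan is to run the classical derivation of the Levi-Civita connection, transplanted from $M$ to the pulled-back bundle $\pi^*TM$ over the slit tangent bundle $TM\setminus\{0\}$, with the adapted coframe $\{dx^k,\delta y^k\}$ playing the role of the coordinate coframe. First I would describe a general linear connection $\bigtriangledown$ on $\pi^*TM$ through its connection $1$-forms, $\bigtriangledown_v\frac{\partial}{\partial x^j}=\omega_j^i(v)\frac{\partial}{\partial x^i}$, where a priori each $\omega_j^i$ is an arbitrary $1$-form on $TM\setminus\{0\}$ and hence decomposes uniquely as $\omega_j^i=\Gamma_{jk}^i\,dx^k+P_{jk}^i\,\delta y^k$ for some functions $\Gamma_{jk}^i,P_{jk}^i$ on $TM\setminus\{0\}$. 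The goal is to show that the two axioms pin these functions down uniquely and force $P^i_{jk}=0$.

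Second, I would convert each axiom into a system of pointwise linear equations. The torsion of $\bigtriangledown$, as a $\pi^*TM$-valued $2$-form, is $\mathbf{T}=-\,dx^j\wedge\omega_j^i\otimes\frac{\partial}{\partial x^i}$; substituting the decomposition of $\omega_j^i$ and using that $\{dx^j\wedge dx^k\}_{j<k}$ and $\{dx^j\wedge\delta y^k\}$ are linearly independent, the vanishing of $\mathbf{T}$ becomes equivalent to (i) $P_{jk}^i=0$, i.e.\ $\omega_j^i=\Gamma_{jk}^i\,dx^k$ has no vertical part, together with (ii) the symmetry $\Gamma_{jk}^i=\Gamma_{kj}^i$. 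For the second axiom I would write almost $g$-compatibility as $dg_{ij}-g_{kj}\,\omega_i^k-g_{ik}\,\omega_j^k=2C_{ijk}\,\delta y^k$, and expand $dg_{ij}=\frac{\delta g_{ij}}{\delta x^k}\,dx^k+\frac{\partial g_{ij}}{\partial y^k}\,\delta y^k$ using $\frac{\partial g_{ij}}{\partial y^k}=2C_{ijk}$. Once (i) holds, the $\delta y^k$-components agree automatically, and the $dx^k$-components give the Koszul-type identity $\frac{\delta g_{ij}}{\delta x^k}=g_{lj}\Gamma_{ik}^l+g_{il}\Gamma_{jk}^l$.

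Third, and at the core of the argument, I would solve this system. The relations $\frac{\delta g_{ij}}{\delta x^k}=g_{lj}\Gamma_{ik}^l+g_{il}\Gamma_{jk}^l$ together with $\Gamma_{ik}^l=\Gamma_{ki}^l$ have exactly the shape of the Riemannian Christoffel relations, with $\frac{\delta}{\delta x^k}$ in place of $\frac{\partial}{\partial x^k}$, so the familiar cyclic-permutation-and-alternating-sum manipulation yields the unique solution
$$\Gamma_{ik}^l=\frac12\,g^{lj}\Big(\frac{\delta g_{ji}}{\delta x^k}+\frac{\delta g_{jk}}{\delta x^i}-\frac{\delta g_{ik}}{\delta x^j}\Big),$$
which establishes uniqueness; existence follows because this expression visibly satisfies (i), (ii) and the $dx^k$-equation. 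To recover the stated closed form I would substitute $\frac{\delta}{\delta x^k}=\frac{\partial}{\partial x^k}-N_k^s\frac{\partial}{\partial y^s}$ and $\frac{\partial g_{ij}}{\partial y^s}=2C_{ijs}$: the $\frac{\partial}{\partial x}$-terms reassemble, by the definition of the formal Christoffel symbols, into $\gamma_{ik}^l$, while the three $N$-terms collect, after relabeling dummy indices and using the symmetry of $C$, into $-\,g^{li}(C_{ijs}N_k^s-C_{jks}N_i^s+C_{kis}N_j^s)$, giving the asserted formula for $\Gamma_{jk}^l$.

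I expect the only genuinely delicate step to be the torsion computation in the second paragraph: one must use the correct notion of torsion for a connection on the pulled-back bundle $\pi^*TM$ (as in \cite{BCS}) and check carefully that the $dx^j\wedge dx^k$ and $dx^j\wedge\delta y^k$ pieces of $\mathbf{T}$ are independent, which is exactly where the nonlinear connection $N_j^i$, and hence the splitting of $TTM$, enters. After that, both the Koszul inversion and the rearrangement into the $\gamma$, $C$, $N$ form are routine index bookkeeping.
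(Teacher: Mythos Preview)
Your argument is correct and is essentially the standard derivation given in \cite{BCS}: impose torsion freeness to kill the $\delta y^k$-part of $\omega_j^i$ and force the symmetry $\Gamma_{jk}^i=\Gamma_{kj}^i$, then read off the Koszul-type formula from the $dx^k$-component of almost $g$-compatibility, and finally expand $\delta/\delta x^k$ to rewrite everything in terms of $\gamma_{jk}^l$, $C$, and $N$. The paper itself does not supply a proof of this theorem---it is quoted from \cite{BCS}---so there is nothing further to compare; your sketch is exactly the argument one finds there, and the only point worth double-checking is the sign and index pattern in the final $C$-$N$ terms, which you have described correctly.
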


The curvature 2-forms on $TM\setminus\{0\}$ of the Chern connection are
$$\Omega _j^i:= d\omega _j^i-\omega _j^k\wedge w_k^i. $$
As a priori, they can be expanded as $$\Omega_j^i=\frac{1}{2}R_{j\ kl}^i dx^k \wedge dx^l+P_{j\ kl}^i dx^k\wedge\frac{\delta y^l}{F}+\frac{1}{2}Q_{j\ kl}^i \frac{\delta y^k}{F}\wedge\frac{\delta y^l}{F}.$$
The tensors $R$,$P$,$Q$ are respectively the $hh$-, $hv$- and $vv$-curvature tensors of the Chern connection. It turns out that $Q_{j\ kl}^i=0$. Let $R_{jikl}=g_{is}R_{j\ kl}^s$. Now we can define the notion of flag curvature which is a generalization of the sectional curvature of Riemannian manifolds. Since the fundamental tensors and curvature tensors are defined on $TM\setminus\{0\}$, we need specify that $y\in T_xM$ as the flagpole and $V=V^i \frac{\partial}{\partial x^i}$ as a transverse edge. The flag curvature of the plane $\text{span}\{y,V\}$ is defined as:
\begin{equation}\label{e:flagcur}
K(y, V):= \frac{V^i(y^jR_{jikl}y^l)V^k}{g(y,y)g(V,V)-(g(y,V))^2}
\end{equation}
where
$$g:=g_{ij}(x,y)dx^i \otimes dx^j=(\frac{1}{2}F^2)_{y^iy^j}dx^i \otimes dx^j$$
is the lift of the fundamental tensor defined by the Hessian of the Finsler metric. When $M$ is a Riemannian manifold, the flag curvature defined above is exactly the sectional curvature.

\subsection{Geodesics and Jacobi fields}

Let $c:[a,b]\rightarrow M$ be a piecewise $C^1$ curve on a Finsler manifold $(M,F)$. The length of $c$ is defined as:
$$L(c):=\int_a^b F(c(t),c'(t))dt.$$
For a pair of points $p$ and $q$, we have an induced distance:
$$d(p,q)= \inf_c L(c)$$
where the infimum is taken among all the piecewise curves connecting $p$ and $q$. Generally it is not true that $d(p,q)=d(q,p)$ since the Finsler metric $F$ is defined only positively homogeneous.

Let $\sigma(t)$ be a smooth curve in $M$ with velocity field $T$. Let $W(t):=W^i(t)\frac{\partial}{\partial x^i}$ be a vector field along $\sigma$. We have two different covariant derivatives with respect to Chern connection according to the reference vectors:
\begin{enumerate}
\item with reference vector $T$:
 $$D_TW:= \left(\frac{dW^i}{dt}+W^jT^k(\Gamma _{jk}^i)_{(\sigma,T)}\right)\frac{\partial}{\partial x^i}\Big|_{\sigma(t)},$$
\item with reference vector $W$:
 $$D_TW:= \left(\frac{dW^i}{dt}+W^jT^k(\Gamma _{jk}^i)_{(\sigma,W)}\right)\frac{\partial}{\partial x^i}\Big|_{\sigma(t)}.$$
\end{enumerate}
A geodesic $\gamma$ on $M$ is a curve which locally minimizes its length. In natural coordinates, by considering the variation of arc length, a constant speed geodesic satisfies
$$D_{\gamma'}\gamma'=0$$
with reference vector $\gamma'$. Let $\sigma: T^1M \to M$ be the unit tangent bundle, i.e. $T^1M=\{(x,y)\in TM: F(x,y)=1\}$. The geodesic flow $g^t$ is defined on $T^1M$, whose orbits projecting to $M$ are unit speed geodesics. Its generator $X$ is a vector field on $T^1M$.

A Jacobi field along a geodesic is a variation vector field of geodesic variation. Let $J(t)$ be a Jacobi vector field along a unit speed geodesic $\gamma(t)$. We denote $T(t)=\gamma'(t)$ the velocity vector field along a geodesic $\gamma$, and $X$ the generator of geodesic flow on $T^1M$. Define
$$J'(t)= D_T J(t)$$ with reference vector $T$. Then $J(t)$ satisfies the Jacobi equation:
$$J''(t)+R(J,T)T=0$$ where $R$ is the curvature tensor related to the flag curvature (see \eqref{e:flagcur}), i.e.
$$R(J,T)T:= (T^jR_{j\ kl}^i T^l)J^k\frac{\partial}{\partial x^i}.$$

The geodesic flow $g^t$ can be described in a very similar way to the Riemannian case, due to the following Riemannian metric defined on $T^1M$. Recall the splitting of the second tangent bundle $TTM$ into a horizontal subbundle $H(TM)$ spanned by $\{\frac{\delta}{\delta x^j} \}$ and a vertical subbundle $V(TM)$ spanned by $\{\frac{\partial}{\partial y^j}\}$ (cf. \cite{BCS}). There is a Riemannian metric $g$ on the manifold $T^1M$ called the Sasaki metric
$$g_{ij}(x,y)dx^i \otimes dx^j+ g_{ij}(x,y)\frac{\delta y^i}{F}\otimes \frac{\delta y^j}{F}$$ such that:
\begin{enumerate}
\item the splitting $TT^1M=H(T^1M)\oplus V(T^1M) \oplus RX$ is orthogonal;
\item $g$ is well adapted to the pulled-back tangent bundle $\pi ^* T^1M$;
\item $g(X,X)=1$;
\item $g$ is $D _T$ invariant;
\item the curvature operator $R(T,\cdot)T$ is symmetric with respect to $g$.

\end{enumerate}
It is well known that geodesic flow preserves the volume form on $T^1M$ induced by the Sasaki metric, which is called the \emph{Liouville measure}. Similar to the Riemannian case, the tangent space of $T^1M$ can be identified with the space of Jacobi fields. We view the Jacobi field $J(t)$ as a section of $\pi^*TM \to T^1M$. So by the second property above, $g(J(t), J(t))= g_T(J,J)$ where $g_T:= g_{ij}(\sigma, T) dx^i \otimes dx^j$. The differential of the geodesic flow can be described via Jacobi fields:
\begin{proposition}(Cf. Appendix in \cite{Fo})\label{Foulon}
Suppose for $z\in T^1M$, $Z \in T_zT^1M$, we have the splitting $dg^t Z= H_t + V_t+ a_t X$, where $H_t \in H(T^1M)$ and $V_t \in V(T^1M)$. Then there exist isometries
$$v_X: \pi ^* T^1M \to V(T^1M) \text{\ and\ } h_X: \pi ^* T^1M \to H(T^1M)$$
such that $h_X^{-1}(H_t)= J(t)$ and $v_X^{-1}(V_t)=J'(t)$, and:
$$g(dg^t Z, dg^t Z)= g(J(t),J(t))+ g(J'(t),J'(t))+g(a_t X, a_t X).$$
\end{proposition}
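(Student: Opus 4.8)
The plan is to mimic the Riemannian construction of the identification of $T_zT^1M$ with the space of Jacobi fields, using the splitting $TT^1M = H(T^1M)\oplus V(T^1M)\oplus \mathbb{R}X$ and the five properties of the Sasaki metric listed above. First I would fix $z=(x,y)\in T^1M$ and $Z\in T_zT^1M$, and realize $Z$ as the initial velocity of a variation $c_s(t)$ of the geodesic $\gamma(t)=\pi(g^t z)$ through geodesics; the associated variation vector field $J(t)$ along $\gamma$ is a Jacobi field, and the construction of $h_X, v_X$ should be the fiberwise linear maps sending a section $W$ of $\pi^*T^1M$ to its horizontal lift in $H(T^1M)$ and its vertical lift in $V(T^1M)$ respectively, along the curve $t\mapsto g^t z$. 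The key computation is to identify $dg^t Z$, decomposed as $H_t + V_t + a_t X$, with the pair $(J(t), J'(t))$ plus the tangential term: the horizontal component $H_t$ corresponds under $h_X$ to $J(t)$, and the vertical component $V_t$ corresponds under $v_X$ to $J'(t)=D_TJ(t)$ with reference vector $T=\gamma'$. This is exactly the statement that Jacobi's equation $J'' + R(J,T)T = 0$ is the second-order ODE obtained by writing the geodesic-flow linearization in the horizontal–vertical frame, with $N_j^i$ supplying the connection terms.

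Next I would verify that $h_X$ and $v_X$ are isometries. This is where properties (1) and (2) of the Sasaki metric enter: property (2) ("$g$ is well adapted to the pulled-back tangent bundle $\pi^*T^1M$") means precisely that the horizontal lift and the vertical lift each preserve the inner product $g_T$ on the fibers, so that $g(h_X W, h_X W) = g_T(W,W) = g(v_X W, v_X W)$; property (1) gives that $H(T^1M)$, $V(T^1M)$ and $\mathbb{R}X$ are mutually orthogonal, so the decomposition $dg^t Z = H_t + V_t + a_t X$ is an orthogonal one. Combining these, $g(dg^tZ, dg^t Z) = g(H_t,H_t) + g(V_t, V_t) + g(a_tX,a_tX)$, and then rewriting the first two summands via the isometries $h_X, v_X$ and the identification $H_t = h_X(J(t))$, $V_t = v_X(J'(t))$ yields $g(J(t),J(t)) + g(J'(t),J'(t)) + g(a_t X, a_t X)$, which is the desired identity. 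I should also note that $g(J(t),J(t)) = g_T(J,J)$ by the remark preceding the proposition, so the right-hand side is intrinsically the norm squared of the Jacobi data.

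The part requiring the most care — and hence the main obstacle — is the first step: checking that in a Berwald (or general Finsler) space the linearized geodesic flow, read off in the $\{\delta/\delta x^j\}$, $\{\partial/\partial y^j\}$ frame, really does produce the Jacobi field $J(t)$ in the horizontal slot and its covariant derivative $J'(t)$ (with reference vector $T$, not $J$) in the vertical slot. In the Finsler setting one has the two competing covariant derivatives described above, and one must confirm that the nonlinear connection $N^i_j$ is exactly the object that makes the geodesic-spray splitting compatible with the reference-vector-$T$ derivative; this is the content of Foulon's work \cite{Fo}, and invoking it is legitimate since the statement is attributed to \cite{Fo}. Once that identification is in hand, properties (3), (4) and (5) of the Sasaki metric guarantee respectively that the $X$-direction is a unit vector (so $g(a_tX,a_tX) = a_t^2$ is handled cleanly), that $D_T$ is a metric connection along $\gamma$ (so that $t\mapsto g(J(t),J(t))$ behaves as expected and the isometries $h_X, v_X$ are $t$-consistent), and that $R(T,\cdot)T$ is $g$-symmetric (ensuring the Jacobi operator is self-adjoint), and the proof concludes by assembling the orthogonal sum.
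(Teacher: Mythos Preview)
The paper does not supply its own proof of this proposition: it is stated with the attribution ``(cf.\ \cite{Fo})'' and no argument is given, so there is no in-paper proof to compare against. Your sketch is a correct outline of the standard argument---identify $T_zT^1M$ with Jacobi data via a geodesic variation, read off $J(t)$ and $J'(t)$ as the horizontal and vertical components of $dg^tZ$ in the $\{\delta/\delta x^j,\partial/\partial y^j\}$ frame, and then invoke the orthogonality and adaptedness of the Sasaki metric---and you rightly flag that the delicate point (the vertical component giving $D_TJ$ with reference vector $T$) is precisely what is established in Foulon's paper.
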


\subsection{Berwald spaces}

A Finsler space $(M,F)$ is said to be a Berwald space if the Chern connection coefficients $\Gamma_{jk}^i$ have no $y$ dependence in natural coordinates, in other words, the Chern connection is defined directly on the underlying manifold $M$. As a result, the $hv$-curvature tensor $P_{j\ kl}^i$ vanishes and only $hh$-curvature tensor remains. In this sense, Berwald spaces are just a little bit more general than Riemannian spaces.

Recall the covariant derivative of $W$ along a curve $\sigma$ on $M$:
$$ D_TW= \left(\frac{dW^i}{dt}+W^jT^k \Gamma _{jk}^i\right)\frac{\partial}{\partial x^i}\Big|_{\sigma(t)}.$$
So when we deal with covariant derivative for a Berwald space, we don't have an issue of reference vector.

Ichijy\={o} proved that for Berwald spaces all the tangent spaces are linearly isometric to a common Minkowski space. The proof below is taken from \cite{BCS}.

\begin{proposition}(Ichijy\={o}, cf.\cite{BCS}) \label{isom}
Let $(M,F)$ be a connected Berwald space. Then its tangent spaces $(T_x M, F_x)$ with associated Minkowski norms, are linearly isometric to each other by parallel translations.
\end{proposition}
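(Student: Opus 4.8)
The plan is to show that parallel translation along any curve gives a linear isometry between tangent spaces, using that for a Berwald space the Chern connection coefficients $\Gamma_{jk}^i$ depend only on $x$. First I would recall that, because $\Gamma_{jk}^i = \Gamma_{jk}^i(x)$, the parallel transport map $P_\sigma : T_{\sigma(a)}M \to T_{\sigma(b)}M$ along a curve $\sigma:[a,b]\to M$ is the unique linear map obtained by solving the linear ODE $\frac{dW^i}{dt} + W^j (\sigma^k)' \Gamma_{jk}^i(\sigma) = 0$; linearity in the initial condition is immediate since the equation is linear and $y$-independent. The content of the proposition is therefore that $P_\sigma$ preserves the Minkowski norm, i.e. $F_{\sigma(b)}(P_\sigma v) = F_{\sigma(a)}(v)$ for all $v$.

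The key step is to differentiate $F^2$ along a parallel vector field. Fix $v \in T_{\sigma(a)}M$ and let $W(t)$ be its parallel extension, so $D_T W = 0$. I would consider the function $t \mapsto F^2(\sigma(t), W(t)) = g_{ij}(\sigma(t),W(t))\,W^i(t)W^j(t)$ and compute its derivative. The crucial input is the \emph{almost $g$-compatibility} of the Chern connection: for vector fields $U,V$ along $\sigma$ one has $\frac{d}{dt} g_W(U,V) = g_W(D_T U, V) + g_W(U, D_T V) + 2 C_W(D_T W, U, V)$ (the Cartan-tensor correction term, with reference vector $W$). Applying this with $U = V = W$ and using $D_T W = 0$ kills \emph{both} the first two terms and the Cartan correction term (which is proportional to $D_T W$), so $\frac{d}{dt} g_W(W,W) = 0$; since $F^2(\sigma,W) = g_W(W,W)$ by homogeneity (Euler's theorem applied to $F^2$), we get $F(\sigma(t),W(t))$ constant in $t$. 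Hence $F_{\sigma(b)}(P_\sigma v) = F_{\sigma(a)}(v)$, so $P_\sigma$ is a linear isometry of Minkowski norms; taking $\sigma$ to run over all curves from $x$ to any other point, and noting that all these tangent spaces are thereby isometric to, say, $(T_{x_0}M, F_{x_0})$ for a fixed basepoint, completes the proof. Connectedness of $M$ is used precisely to guarantee that any two points are joined by a (piecewise smooth) curve.

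The main obstacle — really the only subtle point — is justifying the differentiation formula for $g_W$ with the correct Cartan-tensor term and verifying that it vanishes when the reference vector is itself the parallel field $W$. In the Riemannian case the Cartan tensor is zero and this is just metric compatibility; for Berwald spaces one must be careful that "almost $g$-compatibility" of the Chern connection (as stated in the Chern connection theorem above) yields exactly the identity $dg_W(U,V) = g_W(D_TU,V)+g_W(U,D_TV)+2C_W(D_TW,U,V)$, and then observe that the offending term is linear in $D_TW$ and so drops out. Everything else — existence, uniqueness, and linearity of parallel transport — is standard linear ODE theory applied to the $y$-independent connection coefficients.
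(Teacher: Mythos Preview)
Your proposal is correct and follows essentially the same route as the paper: show parallel transport is linear (since $\Gamma^i_{jk}$ is $y$-independent), then differentiate $g_W(W,W)$ using almost $g$-compatibility and observe the result vanishes when $D_TW=0$. The only minor difference is in how the Cartan correction term is killed: you use that it is linear in $D_TW$ and hence vanishes for a parallel $W$, whereas the paper contracts with $W^iW^j$ and invokes Euler's theorem ($C_{ijk}(x,W)W^i=0$ by homogeneity) to obtain the cleaner identity $\frac{d}{dt}g_W(W,W)=2g_W(D_TW,W)$ \emph{before} using $D_TW=0$; the paper's version yields a reusable ``product rule'' (valid whenever one argument is proportional to the reference vector) that it exploits again later, but for this proposition both arguments are equally valid.
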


\begin{proof}
Take any two points $p$, $q$ on $M$ and a curve $\sigma$ (with velocity field $T$) connecting them. Let $W$ be a parallel vector field, i.e. $D_T W=0$ (reference vector is irrelevant here). It is linear in $W$ since $\Gamma_{jk}^i$ is only depend on $\sigma (t)$. So parallel translations define a linear mapping from $T_pM$ to $T_qM$ and obviously one to one. It is enough to prove that the Finsler norm is preserved under parallel translations. Recall that $F(W)= \sqrt{g_W(W,W)}$ where $g_W:=g_{ij}(\sigma, W)dx^i\otimes dx^j$.

The almost $g$-compatibility criterion of the Chern connection is
$$dg_{ij}-g_{kj}\omega_i^k-g_{ik}\omega_j^k=2C_{ijs}\delta y^s.$$
Evaluate it at $(\sigma,W)(t)$, then contract with $W^iW^j$. By Euler's theorem, the right side is zero since $C_{ijk}$ is $(-1)$-homogeneous in $y$ variable. So
$$W^iW^j(dg_{ij}-g_{kj}\Gamma_{is}^kdx^s-g_{ik}\Gamma_{js}^kdx^s)=0.$$
Contract this $1$-form equation with the velocity of the lift $(\sigma,W)$:
$$W^iW^j(\frac{dg_{ij}}{dt}-g_{kj}\Gamma_{is}^kT^s-g_{ik}\Gamma_{js}^kT^s)=0.$$
Simplifying it we have
$$\frac{d}{dt}g_W(W,W)=2 g_W(D_T W,W).$$
Hence the Finsler norm is preserved under parallel translations.
\end{proof}

Moreover, there are Riemannian isometries between two punctured tangent spaces, and hence between the two unit spheres in tangent spaces. For each fixed $x \in M$, the tensor $\hat{g}:= g_{ij}(x,y) dy^i \otimes dy^j$ defines a smooth Riemannian metric $\hat{g}_x$ on each punctured space $T_xM \setminus 0$. Hence $T^1_xM$ also has an induced Riemannian metric, say $\dot{g}_x$.

\begin{corollary}\label{isometries}
Let $(M,F)$ be a connected Berwald space, $p,q \in M$, and $\sigma(t)$ be a curve on $M$ connecting $p$ and $q$. Parallel translations along $\sigma$ induce Riemannian isometries $(T_pM\setminus 0, \hat{g}_p) \rightarrow (T_qM\setminus 0, \hat{g}_q)$ and $(T^1_pM,\dot{g}_p)\to (T^1_qM, \dot{g}_q)$.
\end{corollary}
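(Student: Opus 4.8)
The plan is to deduce Corollary \ref{isometries} from the previous proposition by the same parallel-translation argument, but now applied to the full quadratic form $\hat{g}$ on the punctured tangent space rather than just its value at the single vector being translated. The key observation is that in a Berwald space the parallel translation $P_\sigma : T_pM \to T_qM$ along $\sigma$ is a fixed linear map, independent of the vector being transported, so it makes sense to ask whether this one linear map pulls back $\hat{g}_q$ to $\hat{g}_p$.

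First I would set up notation: for $y \in T_pM \setminus 0$ let $y(t)$ denote the parallel vector field along $\sigma$ with $y(0)=y$, so that $y(1) = P_\sigma(y)$, and more generally for any $v \in T_pM$ let $v(t)$ be its parallel transport. I would then consider the function $t \mapsto g_{y(t)}(v(t), w(t)) = g_{ij}(\sigma(t), y(t))\, v^i(t) w^j(t)$ and show it is constant in $t$. This is exactly the computation in the proof of Ichijy\={o}'s proposition, except there one contracts the almost $g$-compatibility identity twice against $W^iW^j$; here I would instead contract once against $v^i$ and once against $w^j$. The right-hand side $2 C_{ijs}(\sigma, y)\, v^i w^j\, \delta y^s$ evaluated along the lift $(\sigma, y(t))$ still vanishes: $\delta y^s$ applied to the velocity of the lift $(\sigma, y)$ is $\dot y^s + N^s_j \dot\sigma^j$, which equals $(D_T y)^s = 0$ because $y(t)$ is parallel and in a Berwald space the covariant derivative has no reference-vector ambiguity. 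Since $v(t)$ and $w(t)$ are also parallel, the remaining terms collapse to $\frac{d}{dt} g_{y(t)}(v(t),w(t)) = 0$, giving $g_{ij}(q, P_\sigma y)\, (P_\sigma v)^i (P_\sigma w)^j = g_{ij}(p, y)\, v^i w^j$ for all $y \neq 0$ and all $v, w$. This says precisely that $P_\sigma$ is a linear isometry $(T_pM \setminus 0, \hat g_p) \to (T_qM \setminus 0, \hat g_q)$, since $\hat g$ at the point $y$ is the bilinear form $v, w \mapsto g_{ij}(\cdot, y) v^i w^j$ acting on the $\partial/\partial y^i$ directions, which under the linear map $P_\sigma$ correspond to the $\partial/\partial y^i$ directions at $P_\sigma y$.

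For the statement about unit spheres, I would note that $P_\sigma$ preserves the Finsler norm $F$ by Ichijy\={o}'s proposition, hence maps $T^1_pM$ bijectively onto $T^1_qM$; since $\dot g_p$ is by definition the metric induced on $T^1_pM$ by restricting $\hat g_p$, and $P_\sigma$ is an isometry for $\hat g$ carrying one sphere onto the other, its restriction is an isometry $(T^1_pM, \dot g_p) \to (T^1_qM, \dot g_q)$.

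I do not anticipate a serious obstacle here — the corollary really is a routine strengthening of the preceding proof. The one point requiring a little care is the identification of the abstract "parallel translation as a linear map $T_pM \to T_qM$" with the coordinate manipulation of $\hat g$: one must be clear that $\hat g_x$ lives on the manifold $T_xM \setminus 0$ but is, at each point $y$, a constant-coefficient quadratic form in the natural linear coordinates $y^i$, so that a linear isomorphism of the underlying vector spaces which intertwines these coordinate expressions is automatically a Riemannian isometry. Once that identification is made explicit, everything else is the computation already carried out above.
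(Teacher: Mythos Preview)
Your argument is correct, but it takes a different route from the paper. You redo the Ichijy\={o}-type computation with three independent parallel fields $y, v, w$, using almost $g$-compatibility to show $\frac{d}{dt}g_{y(t)}(v(t),w(t))=0$; the key point, which you handle correctly, is that now the Cartan term $2C_{ijs}v^iw^j\,\delta y^s$ need not vanish by homogeneity (since $v,w\neq y$ in general), and instead vanishes because $\delta y^s$ evaluated on the lifted velocity equals $(D_Ty)^s=0$. The paper instead observes that since $P_\sigma$ already preserves the Minkowski norm $F$ by Ichijy\={o}'s proposition, and $g_{ij}=(\tfrac12F^2)_{y^iy^j}$ is literally the Hessian of $\tfrac12F^2$, one can simply differentiate the identity $F_p^2(W)=F_q^2(P_\sigma W)$ twice in $W$ to obtain $g_{ij}(p,W)=a_i^k g_{kl}(q,P_\sigma W)a_j^l$, which is exactly $\hat g$-preservation. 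The paper's approach is shorter and purely algebraic once norm preservation is in hand; yours is more self-contained and makes explicit that the full fundamental tensor, not just its restriction to the flagpole, is parallel along $\sigma$---which is really the geometric content of the corollary.
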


\begin{proof}
Let $\phi: T_pM \rightarrow T_qM$ be the parallel translation. By Proposition \ref{isom} it is an linear isometry between the two Minkowski spaces. Let $\{u_1,\ldots,u_n\}$ and $\{v_1,\ldots, v_n\}$ be the basis for $T_pM$ and $T_qM$ respectively. Then there exists a matrix $A=(a_i^j)$ such that $\phi(u_i)= a_i^j v_j$. Since $\phi$ is linear, it is enough to prove $\hat{g}_W(U,V)=\hat{g}_{\phi(W)}(\phi(U),\phi(V))$ for any $U,V,W \in T_pM$.

Since $\phi$ preserves Minkowski norm, we have
$$\frac{1}{2}F_p^2(W^1,\ldots,W^n)=\frac{1}{2}F_q^2(a_j^1 W^j,\ldots,a_j^n W^j).$$
After taking second derivative, we have
\begin{equation}\label{e:isometry}
g_{ij}(\sigma,W)= a_i^kg_{kl}(\sigma,\phi(W))a_j^l
\end{equation}
Also $(\phi(U))^k=a_i^kU^i$, $(\phi(V))^l=a_i^lV^i$. By \eqref{e:isometry}, we have $\hat{g}_W (U,V)=\hat{g}_{\phi(W)}(\phi(U),\phi(V))$. Hence $\phi:(T_pM\setminus 0, \hat{g}_p) \rightarrow (T_qM\setminus 0, \hat{g}_q)$  is a Riemannian isometry.
\end{proof}

\section{Berwald spaces of nonpositive flag curvature}

In the rest of this paper, let $(M,F)$ be a complete connected Berwald space with flag curvature $K\leq 0$ and all the geodesics are parameterized by arc length. In this case, Cartan-Hadamard theorem holds and $\widetilde{M}$, the universal cover of $M$, is diffeomorphic to $\mathbb{R}^n$. In this section, we study some properties of Berwald spaces due to nonpositive flag curvature which will play important roles in the proof of higher rank rigidity theorem.

\subsection{Convexity}
On Riemannian manifolds, the nonpositive curvature implies the convexity of the length of Jacobi fields $\|J(t)\|$. It follows that on the universal cover $\widetilde{M}$, the distance function $d(\gamma_1(t), \gamma_2(t))$ is convex in $t$. This is not true generally for Finsler spaces of nonpositive flag curvature. Nevertheless, for Berwald spaces of nonpositive flag curvature, convexity of the distance function follows from a result in \cite{KK} due to A. Krist\'{a}ly and L. Kozma. We repeat the proof here, and the argument is used several times later, especially in the proof of Lemma \ref{flatstripBerwald}, i.e. the flat strip lemma for Berwald spaces. Since the reference vector is irrelevant here in a Berwald space, we have the following nice product rule:
$$\frac{d}{dt}g_W(U,V)=g_W(D_TU, V)+g_W(U, D_TV)$$
when $U$ or $V$ is proportional to $W$.
\begin{proposition}\label{distancefunction}
Let $\alpha(t)$,$\beta(t)$ be two geodesics on $\widetilde{M}$, then the distance function $d(\alpha(t),\beta(t))$ is convex in $t$.
\end{proposition}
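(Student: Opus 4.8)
The plan is to imitate the classical Riemannian argument: realise $f(t):=d(\alpha(t),\beta(t))$ as the length of a geodesic in a geodesic variation, and show that its second $t$-derivative is governed by an index form which is nonnegative when $K\le 0$. The point is that on a Berwald space there is no reference-vector ambiguity, so that $D_TT=0$ along geodesics, the symmetry $D_TU=D_UT$ for a two-parameter variation (torsion freeness), and the product rule for $\tfrac{d}{ds}g_T(\cdot,\cdot)$ along a geodesic all hold in their Riemannian form. Before the computation I would make two reductions: since $\tilde{M}$ is Cartan--Hadamard, two distinct geodesics meet in at most one point, so either $\alpha$ and $\beta$ coincide at every time (then $f\equiv 0$ is convex) or $f$ vanishes for at most one value $t_0$; away from $t_0$, $f$ is smooth because geodesics depend smoothly on their endpoints in a Cartan--Hadamard space, and convexity across $t_0$ follows from $f\ge 0$ together with the one-sided inequalities $f'(t_0^-)\le 0\le f'(t_0^+)$. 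Hence it suffices to prove $f''\ge 0$ at a time with $\alpha(t)\ne\beta(t)$.

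Fix such a $t$ and, for nearby times, let $\gamma_t:[0,1]\to\tilde{M}$ be the constant-speed geodesic from $\alpha(t)$ to $\beta(t)$; put $H(s,t)=\gamma_t(s)$, $T=\partial_sH$, $U=\partial_tH$. Then $H$ is smooth, $D_TT=0$, $g_T(T,T)=F(T)^2=f(t)^2$ independently of $s$, $U(0,t)=\alpha'(t)$, $U(1,t)=\beta'(t)$, $U$ is a Jacobi field along each $\gamma_t$, and $D_TU=D_UT$. First I would record the first-variation identity: differentiating $f(t)^2=g_T(T,T)$ in $t$ (product rule, since $T\propto T$), using $D_UT=D_TU$, and then integrating $\tfrac{d}{ds}g_T(U,T)=g_T(D_TU,T)$ over $[0,1]$ (product rule along the geodesic $\gamma_t$, and $D_TT=0$) gives
$$f(t)f'(t)=g_T(U,T)\big|_{s=1}-g_T(U,T)\big|_{s=0},$$
with $g_T(U,T)$ affine in $s$ of constant slope $g_T(D_TU,T)=f(t)f'(t)$.

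Next I would differentiate once more. Writing $f'(t)=\tfrac1f(G_1-G_0)$ with $G_j(t)=g_{T(j,t)}(U(j,t),T(j,t))$, I differentiate $G_j$ along $\alpha$ (resp.\ $\beta$); the product rule applies because the second argument equals the reference vector $T(j,t)$, and since $\alpha,\beta$ are geodesics the acceleration terms $D_{\alpha'}\alpha'$, $D_{\beta'}\beta'$ vanish, leaving $G_j'(t)=g_T(U,D_TU)\big|_{s=j}$. Integrating $\tfrac{d}{ds}g_T(U,D_TU)=g_T(D_TU,D_TU)+g_T(U,D_TD_TU)$ and substituting the Jacobi equation $D_TD_TU=-R(U,T)T$ yields
$$f(t)f''(t)=\int_0^1\Big(g_T(D_TU,D_TU)-g_T(R(U,T)T,U)\Big)\,ds-\big(f'(t)\big)^2.$$
Finally I would split $U=U^\perp+\psi T$ with $g_T(U^\perp,T)\equiv 0$ and $\psi$ affine in $s$ (so $U^\perp$ is again a Jacobi field), and discard the tangential contributions using $R(T,T)T=0$ and the symmetry of the Jacobi operator $R(\cdot,T)T$; the surviving tangential term equals $\big(f'(t)\big)^2$ and cancels, leaving
$$f(t)f''(t)=\int_0^1\Big(g_T(D_TU^\perp,D_TU^\perp)-g_T(R(U^\perp,T)T,U^\perp)\Big)\,ds\ \ge\ 0,$$
by positive-definiteness of $g_T$ and $K\le 0$. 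Since $f(t)>0$, we conclude $f''(t)\ge 0$.

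The main obstacle is the careful bookkeeping of the Finsler-specific terms: every application of the product rule must be justified (a factor proportional to the reference vector, or differentiation along the geodesic $\gamma_t$, where it holds unconditionally), and the tangential part of $U$ must be isolated so that the spurious $\big(f'(t)\big)^2$ cancels. It is precisely this cancellation --- not a naive index-form estimate, which would only control $f^2$ --- that produces convexity of $f$ itself. A minor additional point is the smoothness of $f$ and the gluing argument at the at-most-one time where $\alpha$ and $\beta$ meet.
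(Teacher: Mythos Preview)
Your proof is correct, but it follows a genuinely different route from the paper's.

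The paper does \emph{not} compute $f''$ directly. Instead it first treats the special case where $\alpha$ and $\beta$ emanate from a common point $p$: it builds the variation $\Sigma(t,s)=\exp_{\gamma(s)}\big((1-\tfrac{t}{T})\exp_{\gamma(s)}^{-1}p\big)$ with $\gamma$ the segment from $\alpha(T)$ to $\beta(T)$, and shows that $t\mapsto F(J_s(t))$ is convex by differentiating $g_{J_s}(J_s,J_s)$ --- note the reference vector is $J_s$, not $T$, so the product rule is justified by the ``one argument proportional to the reference'' clause throughout. Integrating in $s$ gives midpoint convexity $d(\alpha(\tfrac{T}{2}),\beta(\tfrac{T}{2}))\le \tfrac12 d(\alpha(T),\beta(T))$. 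The general case is then reduced to this via a midpoint on the diagonal segment and the triangle inequality, and continuity upgrades $\tfrac12$-convexity to convexity.

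Your argument is the classical second-variation computation with reference vector $T=\gamma_t'$, and the crucial point you identify --- that along $\gamma_t$ the Cartan correction $2C_T(U,V,D_TT)$ vanishes because $D_TT=0$, so the product rule for $g_T$ holds unconditionally there --- is exactly what makes this work in the Berwald setting. The payoff is a sharper conclusion (the pointwise identity $f\,f''=I(U^\perp,U^\perp)\ge 0$) at the cost of the tangential/normal splitting and the $\big(f'\big)^2$ cancellation, plus the small gluing argument at the possible meeting time. The paper's route avoids that bookkeeping entirely but needs the two-step reduction through midpoint convexity; in particular, its computation of $\tfrac{d^2}{dt^2}F(J_s)$ is later reused verbatim in the flat strip lemma, so the paper gets some mileage out of its choice of variation.
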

\begin{proof}
First we prove for the special case when geodesics $\alpha(t)$ and $\beta(t)$ emanate from a point $p$. We want to prove $d(\alpha(\frac{T}{2}),\beta(\frac{T}{2}))\leq \frac{1}{2}d(\alpha(T),\beta(T))$, for any $T \in \mathbb{R}$.

Let $\gamma:[0,S]\rightarrow \widetilde{M}$ be the unique geodesic connecting $\alpha(T)$ and $\beta(T)$. Define $\Sigma:[0,T]\times [0,S]\rightarrow \widetilde{M}$ by
$$\Sigma(t,s)= \exp_{\gamma(s)}((1-\frac{t}{T})\exp^{-1}_{\gamma(s)}(p)).$$
Then $\Sigma$ is a geodesic variation with $\Gamma(\cdot,0)=\alpha$ and $\Gamma(\cdot,S)=\beta$. $J_s(t):=\frac{\partial}{\partial s}\Sigma(t,s)$ is a Jacobi field along $t$-curve. Since flag curvature $K\leq 0$, there is no conjugate point. $J_s(0)=0$, So $J_s(t)\neq 0$ for $t\in(0,T]$. Let $T_s$ be the velocity field of $\Sigma(\cdot, s)$.
Using the product rule and $F(J_s)=[g_{J_s}(J_s,J_s)]^{\frac{1}{2}}$, we have:
\begin{equation}\label{e:long}
\begin{aligned}
&\frac{d^2}{dt^2}F(J_s)=\frac{d}{dt}[\frac{g_{J_s}(D_{T_s}J_s,J_s)}{F(J_s)}]=
\\&\frac{g_{J_s}(D_{T_s}D_{T_s}J_s,J_s)F^2(J_s)+ g_{J_s}(D_{T_s}J_s,D_{T_s}J_s)F^2(J_s)-g^2_{J_s}(D_{T_s}J_s,J_s)}{F^3(J_s)}.
\end{aligned}
\end{equation}
By Jacobi equation, formula for flag curvature and Schwarz inequality, the first term in the numerator of \eqref{e:long} is
\begin{equation*}
\begin{aligned}
g_{J_s}(D_{T_s}D_{T_s}J_s,J_s)&=-g_{J_s}(R(J_s,T_s)T_s,J_s)\\&=-g_{J_s}(R(T_s,J_s)J_s,T_s)\\
&=-K(J_s,T_s)\cdot[g_{J_s}(J_s,J_s)g_{J_s}(T_s,T_s)-
g_{J_s}^2(J_s,T_s)]
\\&\geq 0.
\end{aligned}
\end{equation*}
The last two terms in the numerator of \eqref{e:long} are also nonnegative by Schwarz inequality. Hence
\begin{equation}\label{e:convexform}
\begin{aligned}
\frac{d^2}{dt^2}F(J_s)(t)\geq 0, \text{\ for all\ } \ t\in (0,T].
\end{aligned}
\end{equation}
$F(J_s)(t)$ is $C^{\infty}$ on $(0,1]$ and continuous on $[0,1]$, so by \eqref{e:convexform},
\begin{equation*}
F(J_s)(\frac{T}{2})\leq \frac{1}{2}F(J_s)(T).
\end{equation*}
Hence
\begin{equation}\label{e:convexform1}
\begin{aligned}
&d(\alpha(\frac{T}{2}),\beta(\frac{T}{2}))\leq \int_{0}^{S}F(J_s)(\frac{T}{2})ds \\
\leq &\frac{1}{2}\int_0^S F(J_s)(T)ds = \frac{1}{2}d(\alpha(T),\beta(T)).
\end{aligned}
\end{equation}

We consider another special case when $\alpha(T)=\beta(T)$ for some $T>0$. A construction similar to the one above shows that
\begin{equation*}
F(J_s)(\frac{T}{2})\leq \frac{1}{2}F(J_s)(0),
\end{equation*}
hence
\begin{equation}\label{e:convexform11}
\begin{aligned}
d(\alpha(\frac{T}{2}),\beta(\frac{T}{2}))\leq \frac{1}{2}d(\alpha(0),\beta(0)).
\end{aligned}
\end{equation}
For the general case, let $\gamma$ be the geodesic segment connecting $\beta(t_1)$ and $\alpha(t_2)$ and $q$ be the midpoint of $\gamma$. Then
\begin{equation}\label{e:convexform2}
\begin{aligned}
d(\alpha (\frac{t_1+t_2}{2}),\beta(\frac{t_1+t_2}{2}))&\leq d(\alpha (\frac{t_1+t_2}{2}),q)+d(q,\beta (\frac{t_1+t_2}{2}))\\
&\leq \frac{1}{2}d(\alpha(t_1),\beta(t_1))+\frac{1}{2}d(\alpha(t_2),\beta(t_2)).
\end{aligned}
\end{equation}
The second inequality in \eqref{e:convexform2} follows from \eqref{e:convexform1} and \eqref{e:convexform11}. As the distance function is continuous, the $\frac{1}{2}$-convexity implies convexity.
\end{proof}

A geodesic space with the convexity property \eqref{e:convexform1} above is said to be of \emph{nonpositive curvature in the sense of Busemann} (globally in our case since $\widetilde{M}$ is simply connected). It is conjectured that Finsler manifolds of nonpositive curvature in sense of Busemann must be of Berwald type, see \cite{KK}.

Nonpositive curvature in the sense of Busemann is a weaker notion than \emph{nonpositive curvature in the sense of Aleksandrov}, see \cite{J} for definitions. A Berwald space of nonpositive flag curvature is not necessarily of nonpositive curvature in the sense of Aleksandrov, see \cite{KK}. In fact, if a reversible Finsler manifold is of nonpositive curvature in the sense of Aleksandrov, then it must be a Riemannian manifold of nonpositive sectional curvature. Hence we cannot define the Aleksandrov angle for a Berwald space of nonpositive flag curvature. Nevertheless, we still can define a notion of angle in the next subsection. This angle notion is enough to play the role of angles in Riemannian manifolds, in the proof of higher rank rigidity theorem.
\subsection{A notion of Angle}

Based on the convexity property, we define a notion of angle to measure the distance between two directions. Consider a point $p \in \widetilde{M}$, and two geodesics $c_1(t)$ and $c_2(t)$ emanating from $p$. By convexity, the function $\frac {d(c_1(t),c_2(t))}{t}$ is nondecreasing in $t$. So the limit
$$\lim_{t\rightarrow 0}\frac {d(c_1(t),c_2(t))}{t}$$
exists. By triangle inequality of $d$, the limit is bounded by $2$.
\begin{definition}\label{angledefinition}
Let $u,v \in T^1_p\widetilde{M}$, $c_1(t)$ and $c_2(t)$ are two geodesics emanating from $p$ such that $c_1'(0)=u$ and $c_2'(0)=v$. The angle between $u$ and $v$ is defined as:
$$\angle_p(u,v):=\lim_{t\rightarrow 0}\frac {d(c_1(t),c_2(t))}{t}.$$
\end{definition}

The tangent space $T_p \widetilde{M}$ equipped with Minkowski norm $F_p:=F(p,\cdot)$ is a Finsler manifold if we identify $T_y(T_p \widetilde{M})$ with $T_p \widetilde{M}$ itself and provide it the norm $F(p,\cdot)$. We also call this Finsler manifold Minkowski space. In Minkowski space, the Finsler metric $F$ has no $x$ dependence in natural coordinates. It turns out that Chern connection coefficient $\Gamma_{ij}^k=0$, and hence the flag curvature is always zero. Moreover, the geodesics in Minkowski space are lines. In Minkowski space $(T_p \widetilde{M},F_p)$, let $\tilde{c}_1(t)$ and $\tilde{c}_2(t)$ be the two geodesic rays emanating from origin in the direction $u$ and $v$. Then:

\begin{proposition}\label{angle}
$$\lim_{t\rightarrow 0}\frac {d(c_1(t),c_2(t))}{t}= \lim_{t\rightarrow 0}\frac {d(\tilde{c}_1(t),\tilde{c}_2(t))}{t}.$$
\end{proposition}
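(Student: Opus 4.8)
The plan is to compare the infinitesimal behaviour of the two distance functions by reducing everything to the same object, namely the derivative of the exponential map at $p$. First I would observe that, since $\tilde{M}$ has nonpositive flag curvature and is simply connected, $\exp_p: T_p\tilde{M} \to \tilde{M}$ is a diffeomorphism whose differential at the origin is the identity (under the natural identification $T_0(T_p\tilde{M}) \cong T_p\tilde{M}$). Write $c_1(t) = \exp_p(tu)$ and $c_2(t) = \exp_p(tv)$, and let $\gamma_s(t)$ be the geodesic from $c_1(t)$ to $c_2(t)$, realizing $d(c_1(t),c_2(t))$; in the Minkowski model $T_p\tilde{M}$ the analogue is the straight segment from $tu$ to $tv$, of length $t\,F_p(v-u)$ (using reversibility and $1$-homogeneity). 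So the right-hand side of the claimed identity is simply $F_p(v-u)$, and the whole statement is the assertion that $\angle_p(u,v) = F_p(v-u)$.

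Next I would compute the left-hand limit. Rescale: for fixed small $t$, the curve $\eta(r) = \exp_p(r\,tu + r\,t(v-u))$... more cleanly, consider the family of geodesics $r \mapsto \exp_p(r w)$ for $w$ ranging over the segment $[tu, tv]$, and estimate the length of $s \mapsto \exp_p((1-s)tu + s\,tv)$, which is a curve joining $c_1(t)$ to $c_2(t)$ and hence an upper bound for $d(c_1(t),c_2(t))$. Its velocity at parameter $s$ is $d(\exp_p)_{w(s)}(t(v-u))$ where $w(s) = (1-s)tu + s\,tv$; as $t \to 0$, $w(s) \to 0$ and $d(\exp_p)_{w(s)} \to \mathrm{id}$, so by Taylor expansion the length is $t\,F_p(v-u) + o(t)$. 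This gives $\angle_p(u,v) \le F_p(v-u)$. For the reverse inequality I would push $\gamma_s$, the actual minimizing geodesic in $\tilde M$ from $c_1(t)$ to $c_2(t)$, forward through $\exp_p^{-1}$ to get a curve in $T_p\tilde{M}$ from $tu$ to $tv$; since $d(\exp_p^{-1})$ also tends to the identity near $p$, the $F_p$-length of this image curve is $d(c_1(t),c_2(t)) + o(t)$, and it is at least $F_p(tv - tu) = t\,F_p(v-u)$ because straight segments minimize $F_p$-length in Minkowski space. Dividing by $t$ and letting $t \to 0$ yields $\angle_p(u,v) \ge F_p(v-u)$.

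An alternative, cleaner packaging avoids splitting into two inequalities: a dilation argument. Define $\Phi_t : T_p\tilde{M} \to T_p\tilde{M}$ by $\Phi_t = \frac{1}{t}\exp_p^{-1}\circ \exp_p(t\,\cdot)$; then $\Phi_t(u) = u$, $\Phi_t(v) = v$, and $\Phi_0 = \mathrm{id}$. The metric $g_t := \frac{1}{t^2}(\exp_p^{\,*}\text{-pullback of }F^2)$ scaled appropriately converges, as $t \to 0$, to the flat Minkowski metric $F_p^2$ on $T_p\tilde{M}$ uniformly on compact sets, because the Finsler metric is $C^\infty$ and $F(\exp_p(tw), \cdot) \to F(p,\cdot)$. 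Distances converge with the metrics on compacta, so $\frac{1}{t}d_{\tilde M}(\exp_p(tu),\exp_p(tv)) \to d_{F_p}(u,v) = F_p(v-u)$, which is exactly both sides of the stated equation. Here it is convenient that $\tilde M$ is Berwald: the Chern connection has no $y$-dependence, so $\exp_p$ and its regularity behave as in the Riemannian case and there is no reference-vector ambiguity in the Taylor expansions.

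The main obstacle is making the convergence $d(\exp_p)_w \to \mathrm{id}$ (equivalently $g_t \to F_p^2$) quantitative enough to control the \emph{lengths} of curves, not just pointwise distances — one must ensure the $o(t)$ error terms are uniform in the parameter $s \in [0,1]$ along the connecting curves and, for the lower bound, that the minimizing geodesics $\gamma_s$ for small $t$ stay in a fixed compact neighbourhood of $p$ (which follows since $d(c_1(t),c_2(t)) \le d(c_1(t),p) + d(p,c_2(t)) = 2t \to 0$). Once that uniformity is in hand, both estimates are routine first-order Taylor expansions of the smooth map $\exp_p$ together with the fact that straight lines are $F_p$-geodesics in the Minkowski model.
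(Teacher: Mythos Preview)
Your proposal is correct and follows essentially the same route as the paper: for the upper bound you use the curve $s\mapsto\exp_p\big((1-s)tu+stv\big)$ and for the lower bound you pull the true minimizing geodesic back through $\exp_p^{-1}$, both combined with $d(\exp_p)_0=\mathrm{id}$ and smoothness of $\exp_p$ (which, as you note, holds because the space is Berwald). The paper carries out exactly these two estimates; your additional ``dilation'' repackaging is a pleasant reformulation but not a genuinely different argument, and your identification of the uniformity-in-$s$ issue matches the paper's appeal to continuity of $d\exp_p$ and $F$.
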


\begin{proof}
For Berwald space, the exponential map $\exp_p$ is $C^{\infty}$ on $T_p \widetilde{M}$ and $d \exp_p|_p=Id$, see \cite{BCS}.

Consider $\tilde{\Sigma}: [0,1]\times [0,1]\rightarrow T_p\widetilde{M}$ defined by:
$$\tilde{\Sigma}(s,t)=t[(1-s)u+sv].$$
Let $\Sigma(s,t)=\exp_p (\tilde{\Sigma}(s,t))$ defined on $\widetilde{M}$.
\begin{equation}\label{e:angle}
\begin{aligned}
\lim_{t\rightarrow 0} \frac{d(c_1(t),c_2(t))}{t}&\leq \lim_{t\rightarrow 0}\frac{\int_0^1 F(\Sigma(s,t), \frac{\partial}{\partial s}\Sigma(s,t))ds}{t}\\
&=\lim_{t\rightarrow 0}\int_0^1 F(\Sigma(s,t), \frac{1}{t}d \exp_p|_{\tilde{\Sigma}(s,t)}[\frac{\partial}{\partial s}\tilde{\Sigma}(s,t)])ds\\
&=\lim_{t\rightarrow 0}\int_0^1 F(\Sigma(s,t), d \exp_p|_{\tilde{\Sigma}(s,t)}(v-u))ds\\
&=\int_0^1 F(p,v-u) ds\\
&=\lim_{t\rightarrow 0}\frac {d(\tilde{c}_1(t),\tilde{c}_2(t))}{t}.
\end{aligned}
\end{equation}
We used continuity of $d\exp_p$ and $F$ in the third equality in \eqref{e:angle}.

Conversely, consider $\sigma: [0,1]\times [0,1]\rightarrow \widetilde{M}$ defined by:
$$\sigma(s,t)=\exp_{c_1(t)}[s\exp^{-1}_{c_1(t)}c_2(t)].$$
Let $\tilde{\sigma}(s,t)=\exp^{-1}_p [\sigma(s,t)]$ defined on $T_p \widetilde{M}$.
\begin{equation}\label{e:angle0}
\begin{aligned}
\lim_{t\rightarrow 0} \frac{d(c_1(t),c_2(t))}{t}&= \lim_{t\rightarrow 0}\frac{\int_0^1 F(\sigma(s,t), \frac{\partial}{\partial s}\sigma(s,t))ds}{t}\\
&=\lim_{t\rightarrow 0} \int_0^1 F(\sigma(s,t), \frac{1}{t}d \exp_p|_{\tilde{\sigma}(s,t)}[\frac{\partial}{\partial s}\tilde{\sigma}(s,t)])ds\\
&=\lim_{t\rightarrow 0}\int_0^1 F(p, \frac{1}{t}\frac{\partial}{\partial s} \tilde{\sigma}(s,t))ds\\
&\geq\lim_{t\rightarrow 0}\frac {d(\tilde{c}_1(t),\tilde{c}_2(t))}{t}.
\end{aligned}
\end{equation}
At the third equality in \eqref{e:angle0}, we used the smoothness of $\exp_p$ (at least $C^2$) and smoothness of $F$ (at least $C^1$) away from zero section.
\end{proof}

\begin{corollary}
Let $u,v \in T^1_p\widetilde{M}$. Then $\angle_p(u,v)=F(p,u-v)$.
\end{corollary}
\begin{proof}
This is an immediate corollary of Proposition \ref{angle} since $\lim_{t\rightarrow 0}\frac {d(\tilde{c}_1(t),\tilde{c}_2(t))}{t}=F(p,u-v)$ in Berwald space $\widetilde{M}$.
\end{proof}
\begin{corollary}
$d(c_1(t),c_2(t))\geq d(\tilde{c}_1(t),\tilde{c}_2(t)).$
\end{corollary}
\begin{proof}
This follows from Proposition \ref{angle} and that $\frac{1}{t}d(c_1(t),c_2(t))$ is nondecreasing in $t$, but $\frac{1}{t}d(\tilde{c}_1(t),\tilde{c}_2(t))$ is a constant.
\end{proof}

Proposition \ref{angle} implies that the angle is indeed a metric on $T_p^1\widetilde{M}$ and it induces the usual topology on $T_p^1\widetilde{M}$ for any $p\in \widetilde{M}$. In particular, the angle metric is complete on $T^1_p\widetilde{M}$.

\subsection{Sphere at infinity}

Two geodesics $\gamma$ and $\delta$ are called to be asymptotes if $d(\gamma(t),\delta(t)) \leq c$ for some constants $c$ and $\forall t\geq 0$. We also call $u,v \in T^1\widetilde{M}$ asymptotic if $\gamma_u$ and $\gamma_v$, the geodesics with initial vector $u$ and $v$ respectively, are asymptotic.

\begin{lemma}
Let $c_1(t)$, $c_2(t)$ be two distinct geodesics emanating from $p \in \widetilde{M}$. Then
$$d(c_1(t),c_2(t))\rightarrow \infty \ \  \text{for}\  t\rightarrow \infty.$$
\end{lemma}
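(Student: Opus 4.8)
The plan is to use the convexity of the distance function together with the angle notion. Fix $p \in \tilde{M}$ and two distinct unit vectors $u = c_1'(0)$, $v = c_2'(0)$ in $T^1_p\tilde{M}$. By Definition \ref{angledefinition} and the monotonicity of $\frac{1}{t}d(c_1(t),c_2(t))$ established just before it, for every $t > 0$ we have
$$d(c_1(t),c_2(t)) \geq t \cdot \angle_p(u,v).$$
So it suffices to show that $\angle_p(u,v) > 0$ whenever $u \neq v$. For this I would invoke Proposition \ref{angle}, which identifies $\angle_p(u,v)$ with the corresponding quantity $\lim_{t\to 0}\frac{1}{t}d(\tilde{c}_1(t),\tilde{c}_2(t))$ computed in the Minkowski space $(T_p\tilde{M}, F_p)$, where the geodesics $\tilde{c}_1,\tilde{c}_2$ are the straight rays in directions $u,v$. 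In that computation (carried out inside the proof of Proposition \ref{angle}) the limit equals $F_p(v-u)$, and since $F_p$ is a genuine Minkowski norm, $F_p(v-u) = 0$ forces $u = v$. Hence $\angle_p(u,v) = F_p(v-u) > 0$ for $u \neq v$, and letting $t \to \infty$ in the displayed inequality gives $d(c_1(t),c_2(t)) \to \infty$.

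Alternatively, and perhaps more in the spirit of the surrounding development, one can argue directly from the Jacobi-field estimate \eqref{e:convexform}. Reducing (as in the proof of Proposition \ref{distancefunction}) to geodesics emanating from a common point $p$, connect $c_1(T)$ to $c_2(T)$ by a geodesic $\gamma:[0,S_T]\to\tilde{M}$ and consider the geodesic variation $\Sigma$ interpolating between $\gamma$ and $p$; the Jacobi fields $J_s(t)$ along the $t$-curves satisfy $F(J_s)(t)$ convex with $F(J_s)(0)=0$, hence $t \mapsto \frac{1}{t}F(J_s)(t)$ is nondecreasing, so $F(J_s)(t) \geq \frac{t}{T}F(J_s)(T)$ for $t \le T$. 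Integrating over $s$ and using that $\gamma$ realizes the distance at time $T$ while the integral at time $t$ dominates $d(c_1(t),c_2(t))$ would again yield a linear lower bound; but pinning down the constant still amounts to estimating the initial derivative, i.e. the angle, so this route ultimately reduces to the same point.

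The only genuine issue is to be sure that $\angle_p(u,v) > 0$ strictly, i.e. that two distinct geodesics from $p$ do not stay at sublinear distance; everything else is the monotonicity already recorded before Definition \ref{angledefinition}. This strict positivity is exactly the content of Proposition \ref{angle} combined with positive-definiteness of the Minkowski norm $F_p$ (note $v - u \neq 0$ since $u \neq v$), so the lemma follows with essentially no further work. I would therefore present the short argument: invoke the monotonicity to get $d(c_1(t),c_2(t)) \geq t\,\angle_p(u,v)$, invoke Proposition \ref{angle} to get $\angle_p(u,v) = F_p(v-u) > 0$, and conclude.
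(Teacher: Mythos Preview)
Your argument is correct and is the same idea as the paper's: the paper's entire proof is the single line ``It follows from the convexity of the function $d(c_1(t),c_2(t))$,'' and your version unpacks this by exhibiting the explicit linear lower bound $t\cdot\angle_p(u,v)$ and then verifying positivity of the angle via Proposition~\ref{angle}. The paper leaves that positivity implicit (it follows already from uniqueness of geodesics through $p$ in the Cartan--Hadamard setting, so that $d(c_1(t_0),c_2(t_0))>0$ for some $t_0$, without needing the Minkowski-norm computation), so your route is a bit more elaborate than required but not different in substance.
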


\begin{proof}
It follows from the convexity of function $d(c_1(t), c_2(t))$.
\end{proof}

Similar to the Riemannian case, we have
\begin{proposition}\label{asymptotic}
Let $\gamma(t)$ be a geodesic on $\widetilde{M}$. For any $p \in \widetilde{M}$, there exists a unique geodesic starting at $p$ and asymptotic to $\gamma$.
\end{proposition}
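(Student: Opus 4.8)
The plan is to mimic the classical construction of asymptotic geodesics in Hadamard manifolds, using the convexity statement of Proposition \ref{distancefunction} and the smoothness of the exponential map of a Berwald space in place of the usual Riemannian inputs. Throughout I would use that Cartan--Hadamard holds, so that $\exp_p$ is a $C^\infty$ diffeomorphism of $T_p\tilde M$ onto $\tilde M$ and every geodesic of $\tilde M$ is globally minimizing, together with the (obvious) fact that the unit sphere $T^1_p\tilde M$ is compact.

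For existence, I would fix $p$ and, for each $n\in\mathbb{N}$, take $\sigma_n$ to be the unit-speed geodesic from $p$ to $\gamma(n)$, with length $L_n:=d(p,\gamma(n))$. Since $\gamma$ is minimizing, $|L_n-n|\le d(p,\gamma(0))$, so $L_n\to\infty$. I would then look at $\varphi_n(s):=d(\sigma_n(s),\gamma(s))$ on $[0,L_n]$: by Proposition \ref{distancefunction} it is convex, with $\varphi_n(0)=d(p,\gamma(0))$ and $\varphi_n(L_n)=d(\gamma(n),\gamma(L_n))=|n-L_n|\le d(p,\gamma(0))$, hence $\varphi_n(s)\le d(p,\gamma(0))$ for all $s\in[0,L_n]$. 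Because the vectors $\sigma_n'(0)$ lie in the compact set $T^1_p\tilde M$, I would pass to a subsequence with $\sigma_n'(0)\to v\in T^1_p\tilde M$ and set $\sigma:=\gamma_v$. Since $\sigma_n(s)=\exp_p(s\,\sigma_n'(0))$ and $\exp_p$ is continuous, $\sigma_n(s)\to\sigma(s)$ for each fixed $s\ge0$ along the subsequence, and letting $n\to\infty$ in the bound gives $d(\sigma(s),\gamma(s))\le d(p,\gamma(0))$ for all $s\ge0$; thus $\sigma$ is a geodesic issuing from $p$ and asymptotic to $\gamma$.

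For uniqueness, I would argue that if $\sigma_1$ and $\sigma_2$ both issue from $p$ and are asymptotic to $\gamma$, then $d(\sigma_1(t),\sigma_2(t))\le d(\sigma_1(t),\gamma(t))+d(\gamma(t),\sigma_2(t))$ stays bounded as $t\to\infty$. On the other hand, if $\sigma_1\ne\sigma_2$ then $\sigma_1'(0)\ne\sigma_2'(0)$, and since $t\mapsto d(\sigma_1(t),\sigma_2(t))$ is convex and vanishes at $t=0$, the ratio $d(\sigma_1(t),\sigma_2(t))/t$ is nondecreasing and becomes positive once the geodesics separate, so $d(\sigma_1(t),\sigma_2(t))\to\infty$ --- this is precisely the Lemma preceding the proposition. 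This contradiction forces $\sigma_1=\sigma_2$.

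The convexity estimate and the compactness argument are routine, and the only genuinely Finsler-specific ingredient beyond Proposition \ref{distancefunction} is the continuity of $\exp_p$ on all of $T_p\tilde M$, which is where the Berwald hypothesis enters. I expect the one point needing care to be the uniform-in-$n$ bound on $\varphi_n$ together with the passage to the limit --- in particular the parameter mismatch $L_n\ne n$ between where $\sigma_n$ reaches $\gamma(n)$ and where $\gamma$ does --- but this is absorbed by the estimate $|L_n-n|\le d(p,\gamma(0))$, so the proposition should reduce cleanly to the two facts already established.
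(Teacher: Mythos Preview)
Your proof is correct and is precisely the classical Eberlein--O'Neill argument (Proposition~1.2 in \cite{EO}) that the paper cites verbatim: existence via a limit of connecting geodesics with convexity giving the uniform bound, and uniqueness via the preceding divergence lemma. The paper gives no additional details beyond pointing to that reference and noting that convexity of $d(\gamma(t),\delta(t))$ is the key input for uniqueness, so your write-up is simply an explicit version of what the paper intends.
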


\begin{proof}
Same proof as in Proposition 1.2 in \cite{EO}. Convexity of the distance function $d(\gamma(t),\delta(t))$ is used in the uniqueness part.
\end{proof}

The asymptotes relation is an equivalence relation. Denote by $\widetilde{M}(\infty)$ the set of all equivalence classes. By Proposition \ref{asymptotic}, there exists a unique geodesic, denoted by $\gamma_{px}$, connecting $p\in \widetilde{M}$, $x \in \widetilde{M}(\infty)$. So we can define a topology on $\widetilde{M}(\infty)$ such that the map $x\rightarrow \gamma_{px}'(0)$ is a homeomorphism between $\widetilde{M}(\infty)$ and $T^1_p\widetilde{M}$. This topology on $\widetilde{M}(\infty)$ is independent of $p$ and we call $\widetilde{M}(\infty)$ the sphere at infinity. Moreover, for any $p\in \widetilde{M}$, $x,y \in \widetilde{M}\cup\widetilde{M}(\infty)$,($p\neq x, p\neq y$) we can define $\angle_p(x,y)=\angle_p(\gamma'_{px}(0),\gamma'_{py}(0))$. The angle $\angle_p(x,y)$ depends continuously on $p$, $x$, and $y$.

Two geodesics $\alpha$ and $\beta$ are said to be parallel if $d(\alpha(t),\beta(t))\leq c$ for some constant $c$ and $\forall t \in \mathbb{R}$. In fact by convexity, for parallel geodesics $\alpha$ and $\beta$, $d(\alpha(t),\beta(t))\equiv c$ for some constant $c$ and $\forall t \in \mathbb{R}$. We call $u$, $v$ parallel if $\gamma_u$ and $\gamma_v$ are parallel geodesics. $u\parallel v$ if and only if $u$ is asymptotic to $v$ and $-u$ is asymptotic to $-v$.

\begin{lemma}\label{angleestimate}
Let $\gamma$ be an arbitrary geodesic in $\widetilde{M}$, and $\gamma'(0)=v$. Let $x\in \widetilde{M}(\infty)$. For any $t\geq 0$, let $w(t)$ be the unique vector at point $\gamma(t)$ such that $\gamma_{w(t)}(\infty)=x$. Then:
$$\angle_{\gamma(0)}(v, w(0))\leq \angle_{\gamma(t)}(g^t(v), w(t))$$
for any $t>0$.
\end{lemma}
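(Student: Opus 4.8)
The plan is to deduce the statement for $x\in\tilde M(\infty)$ from a finite‑point monotonicity and then pass to a limit. For a fixed point $q\in\tilde M$ write $w_q(s)$ for the unit initial velocity at $\gamma(s)$ of the geodesic from $\gamma(s)$ to $q$; I would first prove that $s\mapsto \angle_{\gamma(s)}\bigl(g^s\gamma'(0),\,w_q(s)\bigr)$ is non‑decreasing. Granting this, the lemma is immediate: let $\sigma$ be the ray from $\gamma(0)$ to $x$, so that $\sigma'(0)=w(0)$ and $w_{\sigma(R)}(0)=w(0)$ for every $R>0$; applying the finite statement with $q=\sigma(R)$ yields $\angle_{\gamma(0)}(v,w(0))\le\angle_{\gamma(t)}\bigl(g^tv,\,w_{\sigma(R)}(t)\bigr)$, and as $R\to\infty$ the vector $w_{\sigma(R)}(t)$ converges to $w(t)$ — this is exactly the construction of the asymptotic geodesic (Proposition \ref{asymptotic}) together with its uniqueness — while $\angle$ depends continuously on its arguments.

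To prove the finite monotonicity I would combine convexity with the first variation formula. Since $\tilde M$ is a Cartan--Hadamard Berwald space, $\exp_q$ is a diffeomorphism, $d_q:=d(\cdot,q)$ is smooth off $q$, and the geodesic from $q$ realizing $d_q$ at $\gamma(s)$ has unit velocity $-w_q(s)$ there; the first variation of arc length then gives
$$\tfrac{d}{ds}\,d(\gamma(s),q)=-\,g_{w_q(s)}\bigl(w_q(s),\gamma'(s)\bigr),$$
where the sign is moved through the reference vector using that $(g_{ij})$ is even in $y$ (reversibility). By Proposition \ref{distancefunction} (the special case of a geodesic and a constant curve, i.e. convexity of $d(\cdot,q)$), the left side is non‑decreasing, hence $s\mapsto g_{w_q(s)}\bigl(w_q(s),\gamma'(s)\bigr)$ is non‑increasing. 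Transporting everything to $T_{\gamma(0)}\tilde M$ by parallel translation $P_s$ along $\gamma$ — which, by Ichijy\={o}'s proposition and Corollary \ref{isometries} (cf. \eqref{e:isometry}), is a linear isometry of both $F$ and $g$ and satisfies $P_s\gamma'(0)=g^s\gamma'(0)$ — and setting $a(s):=P_s^{-1}w_q(s)$, $v_0:=\gamma'(0)$, we learn that $s\mapsto g_{a(s)}(a(s),v_0)$ is non‑increasing, while $\angle_{\gamma(s)}\bigl(g^sv_0,w_q(s)\bigr)=F\bigl(a(s)-v_0\bigr)$. Thus the finite monotonicity reduces to a Minkowski‑geometric fact in $(T_{\gamma(0)}\tilde M,F)$: for unit vectors $a,b$ and a fixed unit vector $v_0$,
$$g_a(a,v_0)\ \ge\ g_b(b,v_0)\qquad\Longrightarrow\qquad F(a-v_0)\ \le\ F(b-v_0).$$

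The hard part is this last implication, and it is where the Minkowski (rather than Euclidean) nature of $F$ genuinely enters. By Euler's theorem $g_a(a,v_0)=dF|_a(v_0)$, so what must be shown is that the angular pairing $dF|_a(v_0)$ on the indicatrix is inversely monotone with the chordal distance $F(a-v_0)$; in the Riemannian case this is the trivial identity $F(a-v_0)^2=2-2\langle a,v_0\rangle$, but in general it has to be squeezed out of the strong convexity of $F$ (positive definiteness of $(g_{ij})$). I would attempt it by differentiating along a path $a(s)$ on the indicatrix and showing that $\tfrac{d}{ds}\,dF|_{a(s)}(v_0)$ and $\tfrac{d}{ds}\,F\bigl(a(s)-v_0\bigr)$ have opposite signs whenever $a'(s)\neq 0$, with the sign controlled by $(g_{ij})$. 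A more elementary alternative, sidestepping this, is to work directly with the geodesic triangle on $\gamma(0)$, $\gamma(t)$, $\sigma(R)$ and the monotonicity of the ratio $d(c_1(\tau),c_2(\tau))/\tau$ coming from convexity; the catch there is that at the vertex $\gamma(t)$ the side toward $\gamma(0)$ points along $-g^tv$, so one has to use the reversibility of $F$ to convert the resulting estimate for $\angle_{\gamma(t)}(-g^tv,\cdot)$ into one for the forward angle $\angle_{\gamma(t)}(g^tv,\cdot)$.
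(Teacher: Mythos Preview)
Your main line of argument has a genuine gap at exactly the point you flag as ``the hard part.'' The implication
\[
g_a(a,v_0)\ \ge\ g_b(b,v_0)\quad\Longrightarrow\quad F(a-v_0)\ \le\ F(b-v_0)
\]
for unit vectors $a,b,v_0$ in a Minkowski space is \emph{false} in general, so it cannot be ``squeezed out of the strong convexity of $F$.'' The reason is that the two quantities live in different inner products: differentiating along a curve $a(s)$ on the indicatrix gives
\[
\tfrac{d}{ds}\,dF|_{a}(v_0)=g_{a}(a',v_0),\qquad \tfrac{d}{ds}\,F(a-v_0)=\tfrac{1}{F(a-v_0)}\,g_{a-v_0}(a-v_0,a'),
\]
and there is no reason for the restrictions of $g_a(\,\cdot\,,v_0)$ and $g_{a-v_0}(a-v_0,\,\cdot\,)$ to the tangent space of the indicatrix to be negatively proportional. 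In dimension $\ge 3$ one can write down explicit reversible Minkowski norms (e.g.\ small perturbations of the Euclidean norm that are Euclidean on one coordinate $2$--plane through $v_0$ but not on another) for which there exist unit vectors $a,b$ with $dF|_a(v_0)=dF|_b(v_0)$ but $F(a-v_0)\neq F(b-v_0)$; a tiny further perturbation then violates your implication strictly. Since your path $a(s)=P_s^{-1}w_q(s)$ is only constrained by the monotonicity of $g_{a(s)}(a(s),v_0)$ and otherwise depends on the ambient geometry and on $q$, you really would need the full implication, and it is not available.

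The paper avoids all of this. Its proof never touches the first variation formula, parallel transport, or the inner products $g_y$; it works entirely with the angle as the limit of distance ratios and with the convexity of $t\mapsto d(\gamma(t),\alpha(t))$. With $\alpha$ the ray from $\gamma(0)$ to $x$ and $\beta$ the ray from $\gamma(t)$ to $x$, one writes $d(t)=d(\gamma(t),\alpha(t))$, $d_1(s)=d(\gamma(t+s),\beta(s))$, and uses the triangle inequality together with the fact that $\alpha$ and $\beta$ are asymptotic (hence $d(\alpha(t+s),\beta(s))\le d(t)$) to obtain
\[
\frac{d(t)}{t}\ \le\ \frac{d(s+t)}{s+t}\ \le\ \frac{d(t)+d_1(s)}{s+t},
\]
which rearranges to $d(t)/t\le d_1(s)/s$ for every $s>0$; letting $s\to 0$ gives $d(t)/t\le \angle_{\gamma(t)}(g^tv,w(t))$, and convexity gives $\angle_{\gamma(0)}(v,w(0))\le d(t)/t$. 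No finite--point limit, no reversed angle at $\gamma(t)$, and no Minkowski lemma are needed. Your ``elementary alternative'' is in the right spirit, but the difficulty you anticipate with $-g^tv$ does not arise if, instead of a finite triangle, you compare the two forward asymptotic rays $\alpha$ and $\beta$ as above.
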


\begin{proof}
We fix $t>0$, and let $\alpha(s)$, $\beta(s)$ be the geodesics emanating from $\gamma(0)$ and $\gamma(t)$ respectively, such that $\alpha(\infty)=\beta(\infty)=x$. Let $s>0$. Denote $d(t)=d(\gamma(t), \alpha(t))$, $d(s+t)=d(\gamma(s+t), \alpha(s+t))$, $d_1(s)=d(\gamma(t+s), \beta(s))$, $c(s,t)=d(\alpha(t+s), \beta(s))$.

By triangle inequality, $d(s+t)\leq c(s,t)+d_1(s)$. Since $\alpha$ and $\beta$ are asymptotic, the distance function between $\alpha$ and $\beta$ (with a time shift $t$) is decreasing. Thus $c(s,t)\leq d(t)$. We have:
$$\frac{d(t)}{t}\leq \frac{d(s+t)}{s+t}\leq \frac{c(s,t)+d_1(s)}{s+t}\leq \frac{d(t)+d_1(s)}{s+t}.$$
Hence $d(t)(s+t)\leq t(d(t)+d_1(s))$ which gives $\frac{d(t)}{t}\leq \frac{d_1(s)}{s}$. Let $s\rightarrow 0$, then we have
$$\frac{d(t)}{t}\leq \angle_{\gamma(t)}(g^tv, w(t)).$$
But by convexity, $\angle_{\gamma(0)}(u, w(0))= \lim_{r\rightarrow 0} \frac{d(r)}{r} \leq \frac{d(t)}{t}$. Hence
$$\angle_{\gamma(0)}(v, w(0))\leq \angle_{\gamma(t)}(g^tv, w(t)).$$
\end{proof}

The equality holds in Lemma \ref{angleestimate} if and only if the two geodesics $\gamma$ and $\alpha$ bound an area which is totally geodesic and flat. In fact we can also prove for any geodesic triangle in $\widetilde{M}$ an exterior angle is larger than the corresponding interior angle by a similar proof, then Lemma \ref{angleestimate} also follows from this fact and the continuity of angle functions. Lemma \ref{angleestimate} will be used in the proof of higher rank rigidity theorem.

Now we can define a (Tits) metric on $\widetilde{M}(\infty)$. Let $x, y \in \widetilde{M}(\infty)$. Choose arbitrary $p\in \widetilde{M}$, and let $\alpha$ and $\beta$ be the two geodesics emanating from $p$ such that $\alpha(\infty)=x$ and $\beta(\infty)=y$. Denote $d(t)=d(\alpha(t), \beta(t))$.
\begin{definition}
Define
$$d(x,y)=\lim_{t\rightarrow \infty}\frac{d(t)}{t}.$$
\end{definition}

It is easy to prove that limit above is independent of the choice of $p$, and $d(x,y)$ is indeed a metric on $\widetilde{M}(\infty)$.

\subsection{Flat strip lemma}

A flat strip means a totally geodesic isometric imbedding $r:\mathbb{R}\times [0,c]\rightarrow \widetilde{M}$, where $\mathbb{R}\times [0,c]$ is a strip in a Minkowski plane.

\begin{lemma}\label{flatstripBerwald}
If two distinct geodesics $\alpha$ and $\beta$ are parallel, then they bound a flat strip in $\widetilde{M}$.
\end{lemma}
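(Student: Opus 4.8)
The plan is to follow the classical Riemannian argument for the flat strip lemma (as in Eberlein–O'Neill or Cheeger–Ebin), replacing every use of convexity of $\|J(t)\|$ by the convexity established in Proposition \ref{distancefunction}, and every use of the Aleksandrov angle by the angle notion from Definition \ref{angledefinition} together with Lemma \ref{angleestimate}. First I would set up the candidate strip: let $\alpha$ and $\beta$ be the two parallel geodesics, so by the remark following Proposition \ref{asymptotic} we have $d(\alpha(t),\beta(t))\equiv c>0$ for all $t\in\mathbb{R}$. For each $t$ let $\gamma_t:[0,c]\to\tilde M$ be the unique geodesic from $\alpha(t)$ to $\beta(t)$, parametrized by arclength, and define $r(t,s)=\gamma_t(s)$. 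One checks $r(t,0)=\alpha(t)$, $r(t,c)=\beta(t)$, and $\mathbb{R}\ni t\mapsto r(t,s)$ is a curve whose properties we must analyze.

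The heart of the argument is to show each of these curves $t\mapsto r(t,s)$ is itself a geodesic, that it is parallel to $\alpha$ and $\beta$, and that the map $r$ is a totally geodesic isometric embedding of a flat strip in a Minkowski plane. The key step I would carry out first is the angle comparison at the boundary: the function $t\mapsto d(\alpha(t),\gamma_0(s))$ is convex by Proposition \ref{distancefunction} (both are geodesics on $\tilde M$), and it is bounded above by $d(\alpha(t),\beta(t)) + d(\beta(t),\gamma_0(s))$; since $d(\alpha(t),\beta(t))\equiv c$ and $d(\beta(t),\gamma_0(s))$ grows at most linearly with slope $1$, and similarly from the other side, a bounded-above convex function argument forces the relevant curves to lie in a ``flat'' configuration. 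More precisely, I would argue that the angle $\angle_{\alpha(0)}(\alpha'(0),\gamma_0'(0))$ plus the angle $\angle_{\alpha(0)}(-\alpha'(0),\text{(direction along }r\text{ from the other parametrization)})$ must sum in a way that, combined with Lemma \ref{angleestimate} applied along both $\alpha$ and $\beta$, gives equality in all the monotonicity statements; the equality case noted after Lemma \ref{angleestimate} is exactly ``bounds a totally geodesic flat region,'' which is what we want. Concretely: using that $d(\alpha(t),\beta(t))$ is constant, the first variation of arclength along the family $\gamma_t$ shows $\gamma_t'(0)\perp_F \alpha'(t)$ and $\gamma_t'(c)\perp_F \beta'(t)$ in the appropriate Finsler sense, and then convexity of $d(\alpha(t),\gamma_0(s))$ being trapped between constants forces this function to be constant in $t$ for every fixed $s$, i.e. $t\mapsto r(t,s)$ is parallel to $\alpha$.

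Once I know $d(r(t,s),\alpha(t))$ is independent of $t$ for each $s$ and symmetrically $d(r(t,s),\beta(t))$ is independent of $t$, I would invoke the convexity machinery once more: the function $s\mapsto$ (anything measuring deviation from linearity of $r(\cdot,s)$) is convex and vanishes at $s=0$ and $s=c$, hence vanishes identically, giving that each $t\mapsto r(t,s)$ is a geodesic. Then the curvature computation from the proof of Proposition \ref{distancefunction} shows the Jacobi fields of this variation have $\frac{d^2}{dt^2}F(J_s)\equiv 0$, forcing $K\equiv 0$ on the planes tangent to the image, i.e. the strip is flat; by Corollary \ref{isometries} and the fact that in a Berwald space parallel translation is a linear Minkowski isometry, the induced metric on the strip is that of a flat Minkowski plane, and $r$ is the desired totally geodesic isometric embedding.

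The main obstacle I expect is the very last identification — that the flat, totally geodesic, parallel family $r$ is genuinely \emph{isometric} to a strip in a \emph{Minkowski} plane (not merely an affinely flat surface). In the Riemannian case flatness plus total geodesy immediately gives a Euclidean strip; here one must additionally check that the Finsler metric restricted to $\operatorname{image}(r)$ is translation-invariant under the $t$-flow, which follows from Proposition 2.1 (Ichijōo's theorem) since the $t$-curves are parallel geodesics and the $s$-direction is carried along by parallel translation, but assembling this into a clean linear isometry with a fixed Minkowski plane requires care with the reference-vector-free covariant derivative and the product rule quoted before Proposition \ref{distancefunction}. A secondary subtlety is that, unlike in the Riemannian setting, ``perpendicular'' is not symmetric for a Finsler norm, so the first-variation arguments must be stated using $g_W$ with the correct reference vector $W$; fortunately in a Berwald space this ambiguity disappears for the covariant derivative, which is exactly why the Berwald hypothesis is essential here.
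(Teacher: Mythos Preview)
Your parametrization is dual to the paper's, and this choice is where the argument stalls. You set $r(t,s)=\gamma_t(s)$ with the $s$-curves geodesic by construction, and then must show the $t$-curves are geodesics. The paper instead fixes $t_1<t_2$, takes the two cross-geodesics $\gamma=\gamma_{t_1}$ and $\delta=\gamma_{t_2}$, and defines
\[
\Sigma(t,s)=\exp_{\gamma(s)}\Bigl(\tfrac{t-t_1}{t_2-t_1}\exp^{-1}_{\gamma(s)}\delta(s)\Bigr),
\]
so that the \emph{$t$-curves} are geodesics from the outset. This is the crucial move: the computation from Proposition~\ref{distancefunction} (i.e.\ $\tfrac{d^2}{dt^2}F(J_s)\ge 0$) is a Jacobi-field computation and only applies along geodesics. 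With the paper's parametrization, convexity of $t\mapsto L(\Sigma(t,\cdot))$ together with the boundary values $L(\Sigma(t_1,\cdot))=L(\Sigma(t_2,\cdot))=c$ forces $L(\Sigma(t,\cdot))\equiv c=d(\alpha(t),\beta(t))$, hence each $s$-curve is minimizing and therefore geodesic, and the equality $\tfrac{d^2}{dt^2}F(J_s)\equiv 0$ gives $K\equiv 0$ on the image. In your setup you invoke this same Jacobi computation to get flatness, but you cannot use it until you already know the $t$-curves are geodesics---which is precisely what you are still trying to prove at that stage.

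Your proposed substitute for this step has a genuine gap. The convexity bound you write for $t\mapsto d(\alpha(t),\gamma_0(s))$ only traps a convex function under an affine function of slope $1$, which does not force constancy. You then pivot to the equality case of Lemma~\ref{angleestimate}, but the remark following that lemma (``equality holds iff the two geodesics bound a totally geodesic flat region'') is stated without proof and is essentially equivalent to what you are proving; relying on it is circular. Finally, you do not give a separate argument for total geodesy. The paper handles this by a second variational trick: to show the diagonal geodesic $\kappa$ from $\alpha(t_1)$ to $\beta(t_2)$ lies in $\Sigma$, it builds two half-variations $\Sigma_1$ (from $\alpha(t_1)$) and $\Sigma_2$ (from $\beta(t_2)$) and uses the length inequalities $L(\Sigma_1(t,\cdot))+L(\Sigma_2(t,\cdot))\le c=d(\alpha(t),\beta(t))$ to force $\kappa(t)$ onto the cross-geodesic at time $t$. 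Your worries about the Minkowski-isometry identification at the end are not where the difficulty lies; once the surface is totally geodesic with $K=0$, that part is routine.
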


\begin{proof}
Since geodesic $\beta(t), t \in \mathbb{R}$ is a convex set, there exists a unique foot point of $\alpha(0)$ on $\beta$. We may suppose that $\beta$ is parameterized such that $\beta(0)$ is the foot point of $\alpha(0)$ on $\beta$. Since $\alpha$ and $\beta$ are parallel, for any $a \in \mathbb{R}$, $d(\alpha(t), \beta(a+t))\equiv\ c$. But $d(\alpha(0),\beta(a))\geq d(\alpha(0),\beta(0))$, so $d(\alpha(t), \beta(a+t))\geq d(\alpha(t),\beta(t))$ for any $a\in \mathbb{R}$, i.e. $\beta(t)$ is the foot point of $\alpha(t)$ on geodesic $\beta$. Furthermore, it also follows that  $\alpha(t)$ is the foot point of $\beta(t)$ on geodesic $\alpha$. Choose arbitrary $t_1, t_2 \in \mathbb{R}$ and let $\gamma(s)$ and $\delta(s)$ be the geodesics connecting $\alpha(t_1)$ and $\beta(t_1)$, $\alpha(t_2)$ and $\beta(t_2)$ respectively. Denote $d(\alpha(t),\beta(t))\equiv c$. Consider $\Sigma: [t_1, t_2]\times [0,c]\rightarrow \widetilde{M}$ defined by
$$\Sigma(t,s)=\exp_{\gamma(s)}\left(\frac{t-t_1}{t_2-t_1}\cdot \exp^{-1}_{\gamma(s)}\delta(s)\right).$$
Then $\Sigma$ is a geodesic variation with all $t$-curves geodesics.

We first prove that all $s$-curves are also geodesics. Let $J_s(t)$ be the variation vector field, then same computation as in Proposition \ref{distancefunction}, we have:
$$\frac{d^2}{dt^2}F(J_s)(t)\geq 0, \ \text{for all} \ t\in [t_1,t_2].$$
So length of the $s$-curves $L(\Sigma(t,\cdot))$ is convex in $t$. But $L(\Sigma(t_1,\cdot))= L(\Sigma(t_2,\cdot))=c$, hence $L(\Sigma(t,\cdot))=c=d(\alpha(t),\beta(t))$ for all $t\in[t_1,t_2]$. So all $s$-curves are geodesics. It also follows that $\frac{d^2}{dt^2}F(J_s)(t)= 0$, for all $\ t\in [t_1,t_2]$, hence flag curvature $K=0$ on $\Sigma([t_1, t_2]\times [0,c])$ for any $t_1 <t_2$.

Next we prove $\Sigma([t_1, t_2]\times [0,c])$ is totally geodesic. It is enough to prove geodesic segment connecting any two points $\Sigma(t',s')$ and $\Sigma(t'',s'')$ lies in $\Sigma([t_1, t_2]\times [0,c])$. In fact it is enough to prove the geodesic segment $\kappa(t)$ connecting $\alpha(t_1)$ and $\beta(t_2)$ lie in $\Sigma([t_1, t_2]\times [0,c])$. Suppose $\kappa(t)$ has speed $\frac{d(\alpha(t_1), \beta(t_2))}{t_2-t_1}$. Since $\kappa(t)$ and $\alpha(t)$ are two geodesics emanating from $\alpha(t_1)$, we can construct a geodesic variation $\Sigma_1$ as in Proposition \ref{distancefunction}, and it follows that $L(\Sigma_1(t,\cdot))\leq \frac{t-t_1}{t_2-t_1}\cdot c$. Similarly, $\beta$ and $\kappa$ are two geodesics merging at $\beta(t_2)$ and we can construct geodesic variation $\Sigma_2$ such that $L(\Sigma_2(t,\cdot))\leq \frac{t_2-t}{t_2-t_1}\cdot c$. Hence $L(\Sigma_1(t,\cdot))+L(\Sigma_2(t,\cdot))\leq \frac{t-t_1}{t_2-t_1}\cdot c+\frac{t_2-t}{t_2-t_1}\cdot c=c=d(\alpha(t), \beta(t))$. It follows that the joining of $\Sigma_1(t,\cdot)$ and $\Sigma_2(t,\cdot)$ is the geodesic connecting $\alpha(t)$ and $\beta(t)$ and hence $\kappa(t)$ lie on this geodesic. Hence $\kappa(t)$ lies in $\Sigma([t_1, t_2]\times [0,c])$.

Since $\Sigma([t_1, t_2]\times [0,c])$ is totally geodesic and flag curvature $K=0$, it follows that $\Sigma([t_1, t_2]\times [0,c])$ is an imbedded Minkowski rectangle. Since $t_1$ and $t_2$ are arbitrary, the lemma follows.
\end{proof}

\section{Higher rank rigidity}

We shall prove the higher rank rigidity theorem for Berwald spaces in this section. Let $(\widetilde{M}, F)$ always be a complete, simply connected Berwald space with flag curvature $K\leq 0$.

Moreover, if not pointing out specifically, we suppose that $\widetilde{M}$ admits a quotient manifold $M$ of finite volume, and the flag curvature is bounded from below $-b^2 \leq K \leq 0$. The finiteness of volume of $M$ guarantees the finiteness of Liouville measure of $T^1M$.

Volume of a Finsler manifold can be defined as follows. Since the Sasaki metric is a Riemannian metric on $TM$, it induces a volume form on $TM$:
$$dV_g=\det(g_{ij}(x,y))dx^1\cdots dx^ndy^1\cdots dy^n,$$
which gives the Liouville measure on $T^1M$ preserved by the geodesic flow. Hence we can define a Finsler volume form on $M$ by
$$dV_{F}:=\sigma_{F}(x)dx^1\cdots dx^n,$$
where $\sigma_{F}(x):= \frac{\int_{B_x^n}\det(g_{ij}(x,y))dy^1\cdots dy^n}{\text{Vol}(\mathcal{B}^n)}$, $B_x^n:=\{y\in T_xM| F(y)\leq 1\}$, and $\mathcal{B}^n$ is a Euclidean unit ball. In such a way we have:
$$\int_{T^1M}fdV_g=\int_M dV_F(x)\int_{T^1_xM}f|_{T^1_xM}dS^{n-1},$$
where $dS^{n-1}$ is the Euclidean volume element on $(n-1)$-sphere. The above defined volume measure $V_F$ is called the \emph{Sasaki volume}. It is also discovered by R.D. Holmes and A.C. Thompson by Minkowski geometry approach. Consequently, it is also called \emph{Holmes-Thompson volume}. There are other notions of volume such as Busemann-Hausdorff volume. The Sasaki volume has the following advantage: it is easy to see that the Liouville measure is finite for a Finsler manifold of finite Sasaki volume. See Section 5.1 in \cite{Sh} for various notions of volume.

From now on, we will only consider the nontrivial case when $(\widetilde{M},F)$ is not a Minkowski space, i.e. $\text{rank}(\widetilde{M})<\dim \widetilde{M}$. We follow the structure of \cite{BBE}, \cite{BBS}, and \cite{BS2}. Firstly in subsection 4.1, we give the definition of rank and study the properties of Jacobi fields for Berwald spaces. In subsection 4.2, we prove that a higher rank Berwald space admits a great deal of $k$-flats, that is, complete, flat, totally geodesic $k$-dimensional submanifolds without boundary where $k=\text{rank}(\widetilde{M})$. In the third subsection, we construct strong stable manifolds for regular vectors. Next in subsection 4.4, we construct Weyl chambers, $(k-1)$ first integrals, and prove a closing lemma. Following that, we prove that all Weyl chambers are isometric to each other and can be extended to $\widetilde{M}(\infty)$, the boundary of $\widetilde{M}$ at infinity. In subsection 4.5, we prove that $\widetilde{M}(\infty)$ has a structure of Tits building. And the higher rank rigidity theorem follows from a similar argument of Gromov in the last subsection 4.6.

We try to keep the convention of notations in \cite{BBE}, \cite{BBS}, and \cite{BS2}. We mainly focus on the difference between the Berwald case and the Riemannian case, but just simply give references to \cite{BBE}, \cite{BBS}, and \cite{BS2} if the proof of a result can be extended verbatim to the Berwald case.

\subsection{Rank and more on Jacobi fields}

As in the Riemannian case, a parallel Jacobi field $J(t)$ along a geodesic $\gamma$ (with velocity field $T$) satisfies $D_TJ=0$ with reference vector $T$. But since $M$ is a Berwald space, the reference vector is irrelevant here.

\begin{definition}
Let $v \in T^1M$. We define rank$(v)$ as the dimension of the space of parallel Jacobi fields along geodesic $\gamma_v$, and $\text{rank}(M):=\min_{v\in T^1M} \text{rank} (v)$.
\end{definition}

The operator $R(T,\cdot)T$ is symmetric and negatively semidefinite with respect to $g_T:= g_{ij}(\sigma, T)dx^i\otimes dx^j$. Hence Jacobi field $J(t)$ is parallel if and only if $K(T, J)=0$. So rank of a vector reflects infinitesimal flatness along the geodesic. The notions above can be extended to universal cover space $\widetilde{M}$.

By a limit argument, $\text{rank}(w)\leq \text{rank}(v)$ for all vectors $w$ which is in a sufficiently small neighborhood of a given vector $v \in T^1M$. The topology of $T^1M$ can be given by the Sasaki metric. When restricted to a fiber $T^1_pM$, this topology coincides with the one induced by angle distance (See Definition \ref{angledefinition}). As in \cite{BBE}, we define set of regular vectors $\Re:=\{v: \text{rank}(w)=\ \text{rank}(v)\ \text{for all}\ w\ \text{sufficiently close to}\ v\}$, and $\Re_m:=\{v \in \Re: \text{rank}=m\}$. Obviously, $\Re$ is open and dense in $T^1M$, and $\Re_m$ is open.

A computation from Jacobi equation using $K \leq 0$ gives the convexity of $\|J(t)\|_T=\sqrt{g_{T(t)}(J(t), J(t))}$ in $t$. So we have three special classes of Jacobi fields along geodesic $\gamma_v$:

\begin{enumerate}
\item $J^s(v)$: the space of stable Jacobi fields, with $\|J(t)\|_T$ nonincreasing on $\mathbb{R}$, hence $\|J(t)\|_T \leq c$ for some constant $c$ and $\forall t\geq 0$.
\item $J^u(v)$: the space of unstable Jacobi fields, with $\|J(t)\|_T$ nondecreasing on $\mathbb{R}$, hence $\|J(t)\|_T \leq c$ for some constant $c$ and $\forall t\leq 0$.
\item $J^p(v)$: the space of parallel Jacobi fields, with $\|J(t)\|_T = c$ for some constant $c$ and $\forall t\in \mathbb{R}$.
\end{enumerate}

There is a fundamental difference from the Riemannian case. When we deal with Jacobi fields, even though the reference vector is irrelevant here, the fundamental tensor $g_T$ must be evaluated at $T$. For example, (part of) Rauch comparison theorem for Finsler manifolds is formulated as:

\begin{proposition}
Let $J(t)$ be a Jacobi field along a geodesic $\gamma$ (with velocity vector $T$) with $J(0)=0$ and $g_T(T, J)=0$. If $-a^2 \leq K \leq 0$, the for any $0 < t_1 \leq t_2$,
$$\frac{t_2}{t_1}\leq \frac{\|J(t_2)\|_{T(t_2)}}{\|J(t_1)\|_{T(t_1)}}\leq \frac{s_a(t_2)}{s_a(t_1)},$$
where $\|\cdot\|_T:=\sqrt{g_T(\cdot, \cdot)}$, and $s_a(t)=\frac{1}{a}\sinh (at)$.
\end{proposition}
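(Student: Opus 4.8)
The plan is to reduce both inequalities to statements about the single real-valued function $f(t):=\|J(t)\|_{T(t)}=\sqrt{g_T(J,J)}$ along the geodesic $\gamma$, exactly as in the Riemannian case, using only the convexity machinery already developed in the proof of Proposition \ref{distancefunction}. Since $M$ is Berwald, reference vectors are irrelevant for covariant differentiation, so the product rule $\tfrac{d}{dt}g_T(J,J)=2g_T(D_TJ,J)$ holds (here one of the slots carries $J$, and one must only be careful that the fundamental tensor is evaluated at $T$, which is legitimate because $T$ is parallel along $\gamma$, so $g_T$ is a genuinely parallel inner product and all the usual Leibniz manipulations go through). First I would record that on the open set where $f>0$ (which is all of $(0,\infty)$ since $J(0)=0$, $J\not\equiv 0$, and $K\le0$ forbids conjugate points — the same no-conjugate-point remark used in Proposition \ref{distancefunction}), the function $f$ is smooth, and the computation in \eqref{e:long}–\eqref{e:convexform} gives $f''(t)\ge 0$, with the sharper lower bound $f'' \ge -K\,f \ge 0$ coming from the Jacobi equation, the flag-curvature formula, and the Cauchy–Schwarz estimate $g_T(D_TJ,J)^2\le g_T(J,J)\,g_T(D_TJ,D_TJ)$, all already carried out there.

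For the lower bound $t_2/t_1\le f(t_2)/f(t_1)$: since $f$ is convex on $[0,\infty)$ and $f(0)=0$, the difference quotient $f(t)/t=(f(t)-f(0))/(t-0)$ is nondecreasing in $t$; hence $f(t_1)/t_1\le f(t_2)/t_2$, which rearranges to the claimed inequality. (This is precisely the monotonicity already invoked when defining the angle in Definition \ref{angledefinition}, now applied along $\gamma$ rather than at a single point.)

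For the upper bound $f(t_2)/f(t_1)\le s_a(t_2)/s_a(t_1)$: here I would use the curvature lower bound $K\ge -a^2$, which gives $f''\le a^2 f$ on $(0,\infty)$. Compare $f$ with the solution $h(t)=s_a(t)=\tfrac1a\sinh(at)$ of $h''=a^2h$, $h(0)=0$, $h'(0)=1$; a standard Sturm/Wronskian argument shows that the ratio $f(t)/h(t)$ is nonincreasing on $(0,\infty)$. Concretely, set $W(t):=f'(t)h(t)-f(t)h'(t)$; then $W' = f''h - f h'' \le a^2 f h - a^2 f h = 0$, so $W$ is nonincreasing, and since $W(t)\to 0$ as $t\to0^+$ (because $f(t)=f'(0^+)t+o(t)$ and $h(t)=t+o(t)$, using that $f$ has a well-defined nonnegative right derivative at $0$ by convexity — a point that needs a short justification) we get $W\le 0$, i.e. $(f/h)'\le 0$. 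Evaluating the resulting monotonicity at $t_1\le t_2$ yields $f(t_2)/s_a(t_2)\le f(t_1)/s_a(t_1)$, which is the desired bound.

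The main obstacle is the behavior at $t=0$: $f(t)=\sqrt{g_T(J,J)}$ is not \emph{a priori} differentiable at $0$ (the square root of a smooth function vanishing at $0$), so the boundary terms $W(0^+)$ and the right derivative $f'(0^+)$ must be handled carefully rather than by naive Taylor expansion. The clean way is to argue on $[\epsilon, t_2]$ for small $\epsilon>0$, where everything is smooth and positive, obtain $W(t_2)\le W(\epsilon)$ and the two monotonicity statements there, and then let $\epsilon\to 0^+$; the limits $f(\epsilon)/\epsilon\to c_0\in(0,\infty)$ and $W(\epsilon)\to 0$ follow because $J$ is a genuine Jacobi field with $J(0)=0$, so in the identification of Jacobi fields with tangent vectors (and using the isometries of Proposition \ref{Foulon} together with the linear structure of the Jacobi equation), $J(t)$ is, to first order in $t$, $t\,D_TJ(0)$ with $D_TJ(0)\ne 0$, giving $g_T(J,J)=t^2\|D_TJ(0)\|_T^2+o(t^2)$. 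Once this initial-condition bookkeeping is in place, both inequalities are immediate from the convexity/Sturm comparisons above, and the proof differs from the Riemannian one only in that $\|\cdot\|$ is the Minkowski norm $\|\cdot\|_T$ attached to the parallel reference vector $T$.
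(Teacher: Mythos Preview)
The paper states this proposition without proof, presenting it as the Finsler formulation of Rauch's comparison theorem, so there is no argument in the paper to compare against. Your treatment of the lower bound is correct: with $g_T$ parallel along $\gamma$ (which is the point of working in a Berwald space with reference vector $T$), the computation of Proposition~\ref{distancefunction} gives $f''\ge 0$, and convexity together with $f(0)=0$ makes $f(t)/t$ nondecreasing.

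The upper bound, however, has a genuine gap. From the same computation one obtains
\[
f'' \;=\; -K\,f \;+\; \frac{\|D_TJ\|_T^{2}-(f')^{2}}{f},
\]
and the second summand is \emph{nonnegative} by Cauchy--Schwarz. Consequently the hypothesis $K\ge -a^{2}$ only produces the lower estimate $f''\ge -Kf$; it does \emph{not} imply $f''\le a^{2}f$. In dimension at least $3$ that inequality is generically false, since equality $\|D_TJ\|_T^{2}=(f')^{2}$ would force $D_TJ$ to be proportional to $J$, i.e.\ $J$ to remain in a parallel $2$-plane. Your Wronskian/Sturm comparison with $s_a$ therefore does not go through as written. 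The standard remedy is the index-form argument: for fixed $t>0$ compare $J$ on $[0,t]$ with the test field $V(s)=\dfrac{s_a(s)}{s_a(t)}\,E(s)$, where $E$ is the $g_T$-parallel field along $\gamma$ with $E(t)=J(t)$; the curvature bound $K\ge -a^{2}$ gives $I_0^t(V)\le \|J(t)\|_T^{2}\,\dfrac{s_a'(t)}{s_a(t)}$, and the index lemma (valid verbatim here because $g_T$ is a parallel inner product along $\gamma$ and there are no conjugate points) yields $g_T(D_TJ,J)(t)=I_0^t(J)\le I_0^t(V)$. This gives $\bigl(\log(\|J\|_T/s_a)\bigr)'\le 0$, hence $\|J(t)\|_T/s_a(t)$ is nonincreasing, which is exactly the upper inequality.
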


The length of $J(t)$ is evaluated with reference vector $T$. So if we want to estimate the distance function $d$ from Rauch theorem, we need to deal with the reference vector issue. Nevertheless, we have a coarse estimation of distance due to the following observation:

\begin{proposition}\label{uniform}
There exists a uniform constant $C_0$ for $\widetilde{M}$, such that for $\forall p\in \widetilde{M}$, $\forall v, w \in T^1_p\widetilde{M}$, we have
$$\frac{1}{C_0}\|\cdot\|_v \leq \|\cdot\|_w \leq C_0 \|\cdot\|_v. $$
\end{proposition}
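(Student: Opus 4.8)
The plan is to exploit the fact, established by Ichijō (Proposition 2.1 and Corollary 2.3), that on a connected Berwald space all tangent spaces are linearly isometric to a single model Minkowski space $(\mathbb{R}^n, F_0)$ via parallel translation, together with the fact that $\tilde M$ is simply connected, so that parallel translation is path-independent. Concretely, fix a basepoint $o \in \tilde M$ and a linear isomorphism identifying $(T_o\tilde M, F_o)$ with $(\mathbb{R}^n, F_0)$. For any $p \in \tilde M$, parallel translation along any path from $o$ to $p$ gives a linear isometry $P_p: (T_p\tilde M, F_p) \to (\mathbb{R}^n, F_0)$, and by Proposition 2.1 this intertwines the fundamental tensors in the sense of equation \eqref{e:isometry}: if $w \in T_p\tilde M$ and $P_p w = \bar w$, then $g^{(p)}_{ij}(w)\, \xi^i \eta^j = \bar g_{kl}(\bar w)\, \bar\xi^k \bar\eta^l$ for all $\xi,\eta$, where $\bar g$ is the fundamental tensor of $F_0$ and bars denote the images under $P_p$. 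Hence $\|\xi\|^2_w = \bar g(\bar w)(\bar\xi,\bar\xi)$, which means every norm comparison on $\tilde M$ reduces to a norm comparison on the fixed Minkowski space $(\mathbb{R}^n, F_0)$, uniformly in $p$.

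It then suffices to produce a single constant $C_0 = C_0(F_0)$ such that for all unit vectors $\bar w, \bar v \in (\mathbb{R}^n, F_0)$ one has $\tfrac{1}{C_0}\,\bar g(\bar v)(\cdot,\cdot) \le \bar g(\bar w)(\cdot,\cdot) \le C_0\,\bar g(\bar v)(\cdot,\cdot)$ as quadratic forms. This is where reversibility enters: for a reversible Berwald metric the indicatrix $\Sigma = \{F_0 = 1\}$ is a compact, centrally symmetric hypersurface not through the origin, and by the smoothness and strong convexity axiom (positive definiteness of the Hessian of $\tfrac12 F_0^2$ on $\mathbb{R}^n\setminus\{0\}$), the map $\bar w \mapsto (\bar g_{ij}(\bar w))$ is a continuous map from the compact set $\Sigma$ into the open cone of positive definite symmetric matrices. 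A continuous function on a compact set attains its extremes, so there are constants $0 < \lambda \le \Lambda < \infty$ with $\lambda\,\mathrm{Id} \le (\bar g_{ij}(\bar w)) \le \Lambda\,\mathrm{Id}$ for every $\bar w \in \Sigma$ (in the sense of quadratic forms, after fixing the standard Euclidean structure on $\mathbb{R}^n$ to make sense of $\mathrm{Id}$). Taking $C_0 := \Lambda/\lambda$ gives the desired two-sided bound between any two of these quadratic forms, hence between $\|\cdot\|_w$ and $\|\cdot\|_v$ for all unit $w,v \in T_p\tilde M$ and all $p$. One should remark that the auxiliary Euclidean structure used to state compactness and to normalize $\mathrm{Id}$ drops out of the final ratio $\Lambda/\lambda$, so $C_0$ depends only on $F_0$, i.e. only on $\tilde M$.

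The main obstacle is making sure the constant is genuinely uniform over $\tilde M$ and not merely over a fixed tangent space — but this is exactly what the Berwald structure buys us via path-independent parallel transport on the simply connected $\tilde M$, so there is no real difficulty beyond carefully transporting the model-space estimate back along \eqref{e:isometry}. A minor technical point is that $\Sigma$ must be compact; this uses that $F_0$ is a genuine (positively homogeneous, hence by reversibility absolutely homogeneous) norm, so $\{F_0 = 1\}$ is closed and bounded away from both $0$ and $\infty$ in any linear coordinates. No curvature hypothesis and no completeness beyond what defines $\tilde M$ is needed for this proposition.
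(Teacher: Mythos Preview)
Your proof is correct and follows essentially the same approach as the paper: obtain the constant at a single point by compactness of the unit sphere, then use Ichijy\={o}'s result (Corollary~\ref{isometries}) that parallel translation is a linear isometry of the tangent Minkowski spaces to make the constant independent of $p$. One caveat: your claim that simple connectedness of $\tilde M$ forces parallel translation to be path-independent is false in general (the Chern connection of a Berwald space need not be flat; think of holonomy on $S^2$), but it is also unnecessary here---any single parallel translation along any path is already a Minkowski isometry, and $C_0$ depends only on the isometry class of the model norm, so you may simply choose one path per point and proceed. Finally, since $\|\cdot\|_w$ denotes the norm rather than the quadratic form, the constant coming from your $\lambda,\Lambda$ bounds is $C_0=\sqrt{\Lambda/\lambda}$ rather than $\Lambda/\lambda$.
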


\begin{proof}
Fix a point $p\in \widetilde{M}$, since $T^1_p \widetilde{M}$ is a compact space, there exists $C_0(p)$, such that
$$\frac{1}{C_0(p)}\|\cdot\|_v \leq \|\cdot\|_w \leq C_0(p) \|\cdot\|_v$$
for $\forall v, w \in T^1_p\widetilde{M}$. By Corollary \ref{isometries}, a parallel translation induces a Riemannian isometry $(T_p\widetilde{M}\setminus 0, \hat{g}_p) \rightarrow (T_q\widetilde{M}\setminus 0, \hat{g}_q)$. So in fact $C_0$ can be chosen to be independent of $p$.
\end{proof}

\begin{remark}
In \cite{Eg}, a Finsler manifold with the property in Proposition \ref{uniform} is called a uniform Finsler manifold. For example, compact Finsler spaces, their cover spaces and Berwald spaces are all uniform. The advantage of a uniform Finsler manifold is that we can have a coarse estimation of the distance function by overcoming the reference vector issue partially. In particular in strictly negative curvature case, it works very well and many properties in Riemannian geometry can be extended to uniform Finsler case, see \cite{Eg}.
\end{remark}

\subsection{Construction of flats}

Suppose $\text{rank}(\widetilde{M})\geq 2$ in the rest of this paper. We integrate parallel Jacobi fields to obtain the flats. Recall that Proposition \ref{Foulon} builds an isomorphism between $TT^1\widetilde{M}$ and the space of Jacobi fields. Denote by $\wp(v)$ the preimage of $J^p(v)$ under this isomorphism. For each $m \geq \text{rank}(\widetilde{M})$, distribution $\wp$ has dimension locally constant $m$ on $\Re_m$. We shall prove that $\wp$ is integrable on $\Re_m$.

\begin{lemma}(Lemma 2.1 in \cite{BBE})
The distribution $\wp$ is smooth on $\Re_m$.
\end{lemma}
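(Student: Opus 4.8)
The plan is to show smoothness of $\wp$ by exhibiting $J^p(v)$ as the kernel of a smooth family of nonnegative symmetric operators whose kernel has locally constant dimension on $\Re_m$, and then invoking the standard fact that the kernel of such a family varies smoothly. Concretely, for $v\in T^1\tilde M$ with geodesic $\gamma_v$ and velocity field $T$, consider the ``second fundamental form'' type operators attached to stable and unstable Jacobi fields: let $U^s(v), U^u(v)$ be the symmetric operators on $v^\perp$ (the $g_T$-orthogonal complement of $T$ in $\pi^*T\tilde M$) defined by $U^s(v)J(0) = J'(0)$ for $J\in J^s(v)$ and similarly $U^u(v)J(0)=J'(0)$ for $J\in J^u(v)$; these exist by the convexity of $\|J(t)\|_T$ exactly as in the Riemannian case (each stable/unstable space is a graph over $v^\perp$ at $t=0$). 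Both $U^s$ and $U^u$ depend smoothly on $v$, being limits of the solutions of the corresponding Riccati-type equation along $\gamma_v$ with boundary conditions at $\pm\infty$ (uniform convergence of the bounded solutions on compact sets, using $-b^2\le K\le 0$, gives smoothness in $v$). A Jacobi field is parallel iff it is simultaneously stable and unstable, i.e. iff $J(0)\in \ker\big(U^u(v)-U^s(v)\big)$, and since $U^u(v)-U^s(v)$ is symmetric and nonnegative (unstable fields grow at least as fast as stable ones, by convexity), we have $\dim J^p(v) = \dim\ker\big(U^u(v)-U^s(v)\big)$, which equals $m$ and is locally constant on $\Re_m$.

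First I would set up the splitting $\pi^*T\tilde M = \mathbb R T \oplus v^\perp$ along $\gamma_v$, note it is preserved by $D_T$ and by $R(\cdot,T)T$ (using properties (4) and (5) of the Sasaki metric and $g_T(R(J,T)T,T)=0$), so that the Jacobi equation restricts to $v^\perp$. Then I would record that the convexity computation already carried out in Proposition \ref{distancefunction} and Lemma \ref{flatstripBerwald} shows $\frac{d^2}{dt^2}\|J(t)\|_T^2 \ge 0$ for $J\perp T$, giving the monotonicity dichotomy and hence the graph description of $J^s(v), J^u(v)$ over $v^\perp$, defining $U^s(v)$ and $U^u(v)$. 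Next I would establish smooth dependence of $U^s, U^u$ on $v$: the stable operator is obtained as $U^s(v) = \lim_{r\to\infty} S_r(v)$ where $S_r(v)$ is determined by the Jacobi field vanishing at $\gamma_v(r)$, each $S_r$ depends smoothly on $v$ by smooth dependence of solutions of the (Chern-connection) Jacobi ODE on initial data and parameters, and the convergence is uniform on compact sets of $v$ with derivative bounds coming from the two-sided curvature bound exactly as in Eberlein's argument; likewise for $U^u$ with $r\to -\infty$. Finally, since $A(v):=U^u(v)-U^s(v)$ is a smooth family of symmetric nonnegative operators on the (smoothly varying) bundle $v^\perp$ with $\operatorname{rank} A(v)$ locally constant on $\Re_m$ (because $\dim\ker A(v)=\operatorname{rank}(v)=m$ there), the kernel distribution $v\mapsto \ker A(v)$ is a smooth subbundle; transporting back through the isometries $h_X, v_X$ of Proposition \ref{Foulon} identifies $\ker A(v)$ (together with its image under the derivative map) with $\wp(v)\subset T_vT^1\tilde M$, so $\wp$ is smooth on $\Re_m$.

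The main obstacle I expect is the Berwald-specific bookkeeping in the smooth-dependence step: unlike the Riemannian case, the Jacobi operator $R(\cdot,T)T$ and the norm $\|\cdot\|_T=\sqrt{g_T(\cdot,\cdot)}$ both depend on the reference vector $T=\gamma_v'(t)$, so one must check that the Riccati comparison and the uniform $C^\infty$ convergence of $S_r(v)$ still go through with these $y$-dependent coefficients. Here the key enabling facts are that the Chern connection coefficients $\Gamma^i_{jk}$ have no $y$-dependence (so the Jacobi ODE has smooth coefficients in $(x,y)$ jointly and the flow is genuinely smooth on $T^1\tilde M$), the two-sided bound $-b^2\le K\le 0$, the Rauch-type estimate (Proposition 4.x above), and the uniformity constant $C_0$ of Proposition \ref{uniform}, which lets one compare $\|\cdot\|_{T(t)}$ at different times and thereby get the a priori bounds needed for the limit operators to exist and be smooth. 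Everything else is a routine transcription of the Riemannian argument (cf. \cite{BBE}), which is why I would state the linear-algebra/ODE lemmas and refer to \cite{BBE} for the verbatim parts.
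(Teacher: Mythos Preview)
Your approach has a genuine gap at the smoothness step for $U^s(v)$ and $U^u(v)$. You write that these limiting Riccati solutions depend smoothly on $v$ because the finite-time approximants $S_r(v)$ converge uniformly on compacta ``with derivative bounds coming from the two-sided curvature bound exactly as in Eberlein's argument.'' But Eberlein's argument (and Heintze--Im Hof for the Busemann function) only yields \emph{continuity} of $U^s,U^u$ in $v$; uniform $C^0$ convergence of smooth maps does not produce a smooth limit, and there is no a priori control on the $v$-derivatives of $S_r(v)$ as $r\to\infty$. Indeed, it is a standard fact that the stable and unstable distributions of the geodesic flow on a nonpositively curved manifold are in general only continuous (H\"older in the strictly negative case), so $U^u(v)-U^s(v)$ is not a smooth family of operators and the ``constant-rank kernel of a smooth family'' argument does not apply. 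The extra structure of being on $\Re_m$ does not rescue this: it makes the kernel dimension constant, but does not improve the regularity of $U^s$ and $U^u$ themselves.

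The paper (following Lemma~2.1 of \cite{BBE}) avoids exactly this issue by never passing to the limit. One introduces, for each finite $T>0$, the symmetric nonnegative bilinear form $Q_T^w$ on the space of Jacobi fields along $\gamma_w$ (built from $g_T(\cdot,\cdot)$ and the curvature operator integrated over $[-T,T]$). For every fixed $T$ this is a smooth family in $w$, by smooth dependence of solutions of the Jacobi ODE on initial data over a compact interval. The key point is that on $\Re_m$ one can choose $T$ large enough, locally uniformly in $w$, so that $\ker Q_T^w=J^p(w)$ has dimension exactly $m$: parallel fields are always in the kernel, and if the kernel were strictly larger on some sequence $T_n\to\infty$ one would produce, by a limit, an extra parallel Jacobi field, contradicting $\operatorname{rank}(w)=m$. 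Now you are in the legitimate situation of a smooth family of symmetric operators with locally constant rank, and the kernel bundle is smooth. If you want to keep the Riccati viewpoint, the fix is to work with the finite-time operators $S_r(v)$ and $S_{-r}(v)$ directly (whose difference is smooth for each $r$), not with their limits.
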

\begin{proof}
The proof of Lemma 2.1 in \cite{BBE} can be adapted here. We just need to replace the Euclidean inner product $\langle \cdot , \cdot  \rangle$ in the symmetric bilinear form by $g_T(\cdot, \cdot)$. For each vector $v\in \Re_m$ and any $S>0$, consider the symmetric bilinear form
\[Q_S^v(X, Y)=\int_{-S}^Sg_T(R(X, T)T, Y)dt\]
where $T(t)=\gamma_v'(t)$. For a small neighborhood $U$ of $v\in \Re_m$, there exists a large number $S>0$ such that the nullspace of the form $Q_S^w$ is exactly $J^p(w)$ and has constant dimension $m$ for any $w\in U$. So $\wp$ depends smoothly on $w$.
\end{proof}

Given $v\in T^1\widetilde{M}$, let
$$P(v)=\{w\in T^1\widetilde{M}: w\ \text{is parallel to}\ v\}$$
and $F(v)=\pi(P(v))$ where $\pi: T^1\widetilde{M}\to \widetilde{M}$ is the natural projection. By Flat Strip Lemma \ref{flatstripBerwald}, $F(v)$ is a union of flat strips.

\begin{lemma}
$\wp$ is integrable on $\Re_m$, $m \geq \text{rank}(\widetilde{M})$. The maximal arc-connected integral manifold through $v\in \Re_m$ is an open subset of $P(v)$.
\end{lemma}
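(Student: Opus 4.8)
The plan is to show that $\wp$ is involutive on $\Re_m$ and then invoke the Frobenius theorem, with the crucial geometric input being that parallel Jacobi fields can be integrated to flat strips via Lemma \ref{flatstripBerwald}. First I would argue local integrability: fix $v \in \Re_m$ and a small neighborhood $U$ of $v$ in $\Re_m$ on which $\wp$ is smooth of constant dimension $m$ (by the previous lemma). For a single parallel Jacobi field $J \in J^p(v)$, the corresponding vector $\wp$-vector at $v$ is tangent to the curve $s \mapsto \tfrac{d}{ds}\big|_{s=0}$-variation through geodesics parallel to $\gamma_v$; concretely, if $J$ is the variation field of a variation $\Sigma(t,s)$ whose $t$-curves are geodesics, then nonpositive flag curvature plus the computation in Proposition \ref{distancefunction} forces $\|J(t)\|_T$ to be convex, and since $J$ is parallel it is constant, so the same argument as in Lemma \ref{flatstripBerwald} shows the nearby $t$-curves are geodesics parallel to $\gamma_v$ bounding a flat strip. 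Thus each such variation stays inside $P(v)$, and its velocity vectors in $T^1\tilde{M}$ lie in $\wp$ along the way.

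Next I would assemble the flats. Given the $m$-dimensional space $J^p(v)$, pick a basis of parallel Jacobi fields and use the flat strip construction iteratively (or, equivalently, run the exponential-map variation $\tilde\Sigma(t,s_1,\dots,s_{m-1})$ built from $T = \gamma_v'(0)$ together with $m-1$ independent parallel vectors along $\gamma_v$) to produce a totally geodesic flat immersed submanifold $f \colon \mathbb{R} \times W \to \tilde{M}$, where $W$ is a small neighborhood of $0$ in $\mathbb{R}^{m-1}$; the convexity argument shows all the coordinate curves are geodesics and the flag curvature vanishes identically on the image, so by the argument at the end of Lemma \ref{flatstripBerwald} the image is an embedded flat (Minkowski) piece. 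Lifting this to $T^1\tilde{M}$ by taking unit tangent vectors in the "$t$-direction" gives an $m$-dimensional submanifold $N$ through $v$ consisting of vectors parallel to $v$, hence $N \subset P(v)$. Since $TN$ at each point is exactly the span of velocities of parallel variations, $T_w N = \wp(w)$ for all $w \in N$; this is precisely an integral manifold of $\wp$ through $v$. Involutivity of $\wp$ on $\Re_m$ then follows because through every point of $\Re_m$ there passes an integral manifold, so Frobenius applies and $\wp$ is integrable.

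For the "maximal arc-connected integral manifold" statement, I would note that $P(v)$ is itself arc-connected (via the flat strips joining parallel geodesics) and every vector of $P(v)$ that also lies in $\Re_m$ has its $\wp$-space equal to its tangent space to $P(v)$, so the maximal integral leaf through $v$ is contained in $P(v)$; conversely the construction above shows a full neighborhood of $v$ in $P(v)$ lies in the leaf, giving that the leaf is open in $P(v)$. The bound $m \geq \operatorname{rank}(\tilde M)$ is only used to guarantee $\Re_m \neq \emptyset$ and that $\dim \wp$ is locally constant $=m$ there.

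\textbf{Main obstacle.} The delicate point is the reference-vector issue flagged in the paragraph before Proposition \ref{uniform}: when I build the multi-parameter variation and differentiate $F(\text{variation field})$ twice in $t$, the product rule and the clean identity $\tfrac{d}{dt} g_W(U,V) = g_W(D_T U, V) + g_W(U, D_T V)$ are only valid when one argument is proportional to $W$. In Proposition \ref{distancefunction} this was arranged because the variation field itself appeared; here I must make sure that in each step of the iterated flat-strip construction the relevant Jacobi field is the one whose norm I am estimating, rather than an arbitrary parallel field — equivalently, I should do the construction one parallel direction at a time (producing a flat strip, then thickening within the already-flat strip where the Berwald structure restricts to a genuine Minkowski space with trivial connection), rather than all at once. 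Managing this bookkeeping, and checking that the resulting embedded flat's tangent planes really recover $\wp$ pointwise (not just at $v$), is the part that needs care; everything else is the Riemannian argument of Lemma 2.2 in \cite{BBE} transcribed with $g_T$ in place of the Euclidean inner product.
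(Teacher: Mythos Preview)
Your approach diverges from the paper's and has genuine gaps. The paper does not attempt to build the flat piece directly; instead it isolates exactly the two facts that need Berwald-specific justification and then defers to the Riemannian argument of Lemma~2.2 in \cite{BBE} for the Frobenius step. Those two facts are: (1) every integral curve $\sigma$ of $\wp$ consists of vectors parallel to $\sigma(0)$, and (2) conversely, every smooth curve of mutually parallel vectors is tangent to $\wp$. For (1) the paper bounds $d(\gamma_0(t),\gamma_s(t))\le \int_0^s F(Y_u(t))\,du\le C_0\int_0^s\|Y_u(t)\|_T\,du=C_0\int_0^s\|Y_u(0)\|_T\,du$ using the uniform constant $C_0$ of Proposition~\ref{uniform}; for (2) it uses Busemann--Mayer to see $F(J_{s_0}(t))$ is independent of $t$, then $C_0$ again to bound $\|J_{s_0}(t)\|_T$, and finally convexity of $\|J_{s_0}(t)\|_T$ on all of $\mathbb{R}$ forces it to be constant, hence parallel. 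Once (1) and (2) are in hand, involutivity follows because flows of $\wp$-sections preserve $P(v)$ by (1), so the commutator curve of two such sections lies in $P(v)$, and by (2) its tangent is in $\wp$.

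Your proposal tries to shortcut this by producing an $m$-dimensional flat directly, but the ``iterated flat strip'' construction does not stand on its own. A parallel Jacobi field $J_2$ is only given along $\gamma_v$; to thicken the first strip $S_1$ you would need a coherent parallel Jacobi field along \emph{every} geodesic in $S_1$, and parallel transport of $J_2(0)$ inside $\tilde M$ along paths in $S_1$ need not be path-independent (flag curvature zero on $S_1$ does not kill the full curvature tensor). What you actually need is fact~(1): take a smooth local section of $\wp$ extending $J_2$ and flow---then the integral curves stay in $P(v)$. Likewise, your claim that $T_wN=\wp(w)$ for $w\in N$ is precisely fact~(2), which you have not proved; it requires the Busemann--Mayer\,$+\,C_0$ argument, not the computation in Lemma~\ref{flatstripBerwald}. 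Finally, your appeal to Frobenius is inverted: if you have already built an integral manifold through every point you have proved integrability outright; Frobenius is what BBE uses to go the other way from involutivity. The reference-vector obstacle you flag is real, but the resolution is not ``one direction at a time''---it is the coarse comparison $\frac{1}{C_0}\|\cdot\|_v\le\|\cdot\|_w\le C_0\|\cdot\|_v$ from Proposition~\ref{uniform}, which is exactly what the paper invokes and what is missing from your outline.
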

\begin{proof}
See Lemma 2.2 in \cite{BBE} where Frobenius theorem is used to show the integrability. For Berwald space, we need a coarse estimation to show that the curves tangent to $\wp$ are exactly those curves consisting of parallel vectors.
\begin{enumerate}
\item Let $\sigma: (-\epsilon, \epsilon)\rightarrow \Re_m$ be an integral curve of $\wp$, then $\sigma(s)$ is parallel to $\sigma(0)=v$ for all $s$.

    In fact, consider the geodesic variation $\gamma_s(t)=(\pi \circ g^t)(\sigma(s))$ of $\gamma_0=\gamma_v$. So the variation vector fields
$$Y_s(t)=(d\pi \circ dg^t)(\frac{d}{ds}\sigma(s))$$
are parallel Jacobi fields along $\gamma_s$ by definition of $\wp$. Then for any $t\in \mathbb{R}$,
\begin{equation*}
\begin{aligned}
&d(\gamma_0(t), \gamma_s(t))\leq \int_0^s F(Y_u(t))du\\
\leq &C_0 \int_0^s \|Y_u(t)\|_{T} du
=C_0\int_0^s\|Y_u(0)\|_{T}du,
\end{aligned}
\end{equation*}
where $C_0$ is the uniform constant in Proposition \ref{uniform}, and the last equality is because $Y_u(t)$ is parallel Jacobi fields for any $u$. Hence $\gamma_0$ and $\gamma_s$ are parallel geodesics. So $\sigma(s)$ is parallel to $\sigma(0)=v$ for all $s$.

\item Conversely, if $\sigma(s)\in T^1\widetilde{M}$ is parallel to $\sigma(0)=v$ for all $s$, then $\sigma(s)$ is an integral curve of $\wp$.

    In fact, consider the geodesic variation $\gamma_s(t)=(\pi \circ g^t)(\sigma(s))$ of $\gamma_0=\gamma_v$. $\gamma_s(t)$ are parallel geodesics. Fix arbitrary $s_0$, then $d(\gamma_{s_0}(t), \gamma_s(t))= c(s)$ for $\forall t$ and some constant $c(s)$ dependent on $s$. By Busemann-Mayer theorem:
    $$F(J_{s_0}(t))=\lim_{h\rightarrow 0^+}\frac{d(\gamma_{s_0+h}(t), \gamma_{s_0}(t))}{h}=\lim_{h\rightarrow 0^+}\frac{c(s_0+h)}{s_0} .$$
    Then we have $F(J_{s_0}(t))\leq C$ for any $t$ and some constant $C$ dependent on function $c(s)$. Hence $\|J_{s_0}(t)\|_T\leq C_0 C$ for any $t\in \mathbb{R}$. This implies that $J_{s_0}(t)$ is a parallel Jacobi field along geodesic $\gamma_{s_0}$ for arbitrary fixed $s_0$. Hence $\sigma(s)$ is an integral curve of $\wp$.
\end{enumerate}
The rest argument is the same as in Lemma 2.2 in \cite{BBE}.
\end{proof}

Moreover, given $w\in P(v)\cap \Re_m$, it follows that $P(v)$ and $F(v)$ are smooth $m$-dimensional manifolds near $w$ and $\pi(w)$ respectively (see Lemma 2.2 in \cite{BBE}). Globally, $F(v)$ is a union of flat strips, and it is closed and convex. The following proposition says if $M$ has finite volume, these flat strips join very well to form an $m$-flat.

\begin{proposition}\label{flats}
Suppose that $\widetilde{M}$ admits a quotient manifold $M$ of finite Finsler volume. Then for every $v\in \Re_m$ the set of $F(v)$ is an $m$-flat.
\end{proposition}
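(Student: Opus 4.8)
The plan is to follow the proof of Proposition~2.3 in \cite{BBE}, carrying each step across to the Berwald setting. First I would record what the previous lemmas already give: $F(v)$ is a union of flat strips (Lemma~\ref{flatstripBerwald}); it is closed and convex because $t\mapsto d(\alpha(t),\beta(t))$ is convex for every pair of geodesics (Proposition~\ref{distancefunction}); and $\pi(P(v)\cap\Re_m)$, which contains $\pi(v)$, lies in the relative interior of $F(v)$, near each of whose points $F(v)$ is a smooth, totally geodesic $m$-dimensional submanifold with $K\equiv0$. Hence $F(v)$ is automatically a complete, flat, totally geodesic $m$-dimensional submanifold with (a~priori nonempty) boundary; being an $m$-flat is equivalent to $\partial F(v)=\emptyset$. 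So the task reduces to excluding boundary points.

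Second, I would reduce to a recurrent $v$. Finiteness of the Finsler volume of $M$ makes the Liouville measure on $T^1M$ finite and $g^t$-invariant, so by Poincar\'e recurrence the vectors recurrent in both time directions are dense in $T^1M$, hence dense in the open set $\Re_m$. Granting the statement for these, the general $v\in\Re_m$ follows by a limiting argument: choosing $v_n\to v$ with $F(v_n)$ an $m$-flat, the tangent $m$-planes $T_{\pi(v_n)}F(v_n)\ni v_n$ subconverge, so the $F(v_n)$ subconverge (uniformly on compacta) to a complete totally geodesic submanifold $F_\infty$ through $\pi(v)$, of dimension $m$, flat by continuity of $K$; every point of $F_\infty$ lies on the geodesic through it in the parallel-transported direction of $v$, which stays at constant distance from $\gamma_v$ by flatness of $F_\infty$, hence lies in $P(v)$; thus $F_\infty\subseteq F(v)$, and since an $m$-dimensional submanifold without boundary sits inside the connected $m$-dimensional submanifold-with-boundary $F(v)$, invariance of domain gives $F_\infty=F(v)$.

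Third, for $v\in\Re_m$ recurrent I would argue by contradiction. Suppose $x_0\in\partial F(\tilde v)$ for a lift $\tilde v$, and let $w$ be the unique vector over $x_0$ parallel to $\tilde v$ (uniqueness of asymptotes, Proposition~\ref{asymptotic}). Since $\pi(P(\tilde v)\cap\Re_m)$ lies in the relative interior of $F(\tilde v)$, we get $w\notin\Re_m$; as $w\in P(\tilde v)$ forces $\text{rank}(w)\ge m$, upper semicontinuity of rank together with $m=\text{rank}(\tilde M)$ gives $\text{rank}(w)\ge m+1$. Near $x_0$ the set $F(\tilde v)$ is a convex subset of an $m$-dimensional totally geodesic flat, so at most $m$ independent parallel Jacobi fields along $\gamma_w$ are tangent to it; hence there is a parallel Jacobi field $J$ along $\gamma_w$ with $J(0)\notin T_{x_0}F(\tilde v)$, and then $\exp_{x_0}(sJ(0))\notin F(\tilde v)$ for small $s>0$. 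On the other hand, running the argument of \cite{BBE}—using recurrence of $v$ to produce $t_n\to\infty$ and deck transformations $\phi_n$ with $d\phi_n(g^{t_n}\tilde v)\to\tilde v$, noting $F(g^{t_n}\tilde v)=F(\tilde v)$ since $g^{t_n}\tilde v$ is parallel to $\tilde v$, so $\phi_nF(\tilde v)=F(d\phi_n g^{t_n}\tilde v)\to F(\tilde v)$ on compacta by smooth dependence of $\wp$ on $\Re_m$, and combining this with the convexity of $d$ (Proposition~\ref{distancefunction})—forces $J$ to integrate to a geodesic variation of $\gamma_w$ through geodesics all parallel to $\gamma_w$; these lie in $P(\tilde v)$, whence $\exp_{x_0}(sJ(0))\in\pi(P(\tilde v))=F(\tilde v)$, a contradiction. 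Therefore $\partial F(\tilde v)=\emptyset$ and $F(v)$ is an $m$-flat.

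The main obstacle is this third step. In \cite{BBE} the manipulations of lengths and angles of Jacobi fields use the fixed Euclidean inner product, whereas here the length of a Jacobi field along $\gamma_w$ must be measured with the reference tensor $g_{T}$, and $g_T$-lengths are only comparable to distances in $\tilde M$ up to the uniform constant $C_0$ of Proposition~\ref{uniform}. Every estimate therefore has to be rephrased through $g_T$, the coarse comparison of Proposition~\ref{uniform}, and the convexity of $d$ (Proposition~\ref{distancefunction}), exactly as in the proof of Lemma~\ref{flatstripBerwald}; Corollary~\ref{isometries} is what makes $C_0$ uniform over $\tilde M$, and Proposition~\ref{Foulon} is what lets one pass between $dg^t$ and Jacobi fields. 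All the remaining ingredients—closedness and convexity of $F(v)$, the identification of its relative interior, and the limiting argument—transfer from \cite{BBE} with only notational changes.
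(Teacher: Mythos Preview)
Your third step has a genuine gap. The proposition is stated for every $v\in\Re_m$ with $m$ arbitrary, and it is \emph{used} in that generality to prove the subsequent theorem that every regular vector has rank $k=\text{rank}(\tilde M)$. Your deduction ``$w\notin\Re_m$ and $\text{rank}(w)\ge m$, together with $m=\text{rank}(\tilde M)$, give $\text{rank}(w)\ge m+1$'' therefore assumes exactly what that later theorem establishes, so the argument is circular. When $m>\text{rank}(\tilde M)$ one may perfectly well have $\text{rank}(w)=m$ with $w\notin\Re$, simply because vectors of rank $<m$ accumulate on $w$; then there is no extra parallel Jacobi field transverse to $F(\tilde v)$, and the contradiction scheme collapses. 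Even granting such a $J$, the clause ``running the argument of \cite{BBE}\dots forces $J$ to integrate to a geodesic variation through geodesics all parallel to $\gamma_w$'' does not correspond to the proof of Proposition~2.3 in \cite{BBE} and is left unexplained: recurrence of $v$ gives $\phi_n F(\tilde v)\to F(\tilde v)$, but you never say how this constrains a Jacobi field along $\gamma_w$ based at the boundary point $x_0$.

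The paper's route is different and avoids these issues. It neither reduces to recurrent $v$ nor compares $\text{rank}(w)$ with $m$. It applies Lemma~\ref{nonwondering} directly to the asymptotic pair $(v,w)$ with $w\in P(v)$ sitting over an \emph{arbitrary} $q\in F(v)$: one obtains $v_n\to v$, $s_n\to\infty$, $\phi_n\in\Gamma$ with $w_n:=d\phi_n(g^{s_n}v_n)\to w$, and since $\Re_m$ is open, $g^t$-invariant and isometry-invariant, $w_n\in\Re_m$. Near each $w_n$ the set $P(w_n)$ is a smooth $m$-manifold, and passing to the limit shows that $F(v)=F(w)$ is an $m$-manifold near $q$; together with closedness and convexity of $F(v)$ this rules out boundary points. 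Your step~2 limit is essentially this same mechanism applied only at $q=\pi(v)$; what is missing is the use of Lemma~\ref{nonwondering} to reach every $q\in F(v)$.
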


The following two lemmas are needed to prove Proposition \ref{flats}. The first one is an analogue of Lemma 2.4 in \cite{BBE}. The notion of orthogonality is with respect to $g_T(\cdot, \cdot)$ instead of the Euclidean inner product $\langle \cdot , \cdot  \rangle$ now. Keeping this in mind, the proof of the following lemma is standard. Similar properties of Jacobi fields can be found in Section 5.4 in \cite{BCS}.
\begin{lemma}\label{orthogonality}
Let $N^*$ be a totally geodesic Berwald submanifold of a Berwald space $N$. Let $\gamma$ be a geodesic of $N$ that lies in $N^*$, and let $J$ be a Jacobi field in $N$ along $\gamma$. Let $J_1$ and $J_2$ denote the components of Y tangent and orthogonal to N * respectively. Then $J_1$ and $J_2$ are Jacobi fields in $N$ along $\gamma$.
\end{lemma}

Let $M=\widetilde{M}/\Gamma$ have finite volume, so does $T^1M$, hence every vector in $T^1M$ is nonwandering relative to the geodesic flow. Lifted to $T^1\widetilde{M}$, for any $w \in T^1\widetilde{M}$, there exist $\{t_n\}\rightarrow +\infty$, $w_n\rightarrow w$ and $\{\phi_n\}\subseteq \Gamma$ the deck group of $M$, such that $(d\phi_n\circ g^{t_n})(w_n)\rightarrow w$ as $n\rightarrow +\infty$. The following lemma is also needed to prove Proposition \ref{flats}. A key ingredient in the proof is Proposition \ref{asymptotic} for Berwald case, and the rest is straightforward.
\begin{lemma}\label{nonwondering}(Lemma 2.5 in \cite{BBE})
Let $M=\widetilde{M}/\Gamma$ have finite volume. Let $v, w$ be asymptotic vectors in $T^1\widetilde{M}$. Then there exist $\{s_n\}\rightarrow +\infty$, $v_n\rightarrow v$ and $\{\phi_n\}\subseteq \Gamma$, such that $(d\phi_n\circ g^{s_n})(v_n)\rightarrow w$ as $n\rightarrow +\infty$.
\end{lemma}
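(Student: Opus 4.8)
The plan is to transfer the classical argument (Lemma 3.4 in \cite{BBE}, or the corresponding statement in \cite{BS2}) to the Berwald setting, and the main point is that all the geometric ingredients it uses — convexity of the distance function, asymptotic geodesics, and nonwandering of every vector — have already been established above. First I would fix asymptotic vectors $v,w\in T^1\tilde M$. Since $M=\tilde M/\Gamma$ has finite volume, $T^1M$ has finite Liouville measure, hence every vector is nonwandering for the geodesic flow $g^t$; so, projecting $w$ to $T^1M$ and applying Poincar\'e recurrence, I get sequences $t_n\to+\infty$, $w_n\to w$ and $\phi_n\in\Gamma$ with $(d\phi_n\circ g^{t_n})(w_n)\to w$. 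The goal is to replace $w$ by $v$ on the left-hand side, at the cost of changing the times $t_n$ to new times $s_n$, still going to $+\infty$.

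The key step is to control $d\big(g^{s_n}v,\ g^{t_n}w_n\big)$ uniformly for a suitable choice of $s_n$. Let $p=\pi(v)$, $q=\pi(w)$, and set $a=d(p,q)$. Because $v$ and $w$ are asymptotic, the function $t\mapsto d(\gamma_v(t),\gamma_w(t))$ is convex (Proposition \ref{distancefunction}) and bounded above for $t\ge 0$; a bounded convex function on $[0,\infty)$ is nonincreasing, so $d(\gamma_v(t),\gamma_w(t))\le a$ for all $t\ge 0$. Now for each $n$ let $v_n$ be the vector at $\pi(w_n)$ obtained by moving $w_n$ along the foot-point/asymptote construction so that $\gamma_{v_n}$ is asymptotic to $\gamma_{w_n}$ and $\pi(v_n)$ is ``the copy of $p$'' near $\pi(w_n)$; more precisely, since $w_n\to w$, the geodesic $\gamma_{w_n}$ converges to $\gamma_w$ in the $C^\infty$ topology, and by Proposition \ref{asymptotic} (uniqueness and continuous dependence of asymptotes, together with continuity of the angle functions) the unique vector $v_n$ at a point converging to $p$ with $\gamma_{v_n}$ asymptotic to $\gamma_{w_n}$ converges to $v$. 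Choosing $s_n$ so that $\pi(g^{s_n}v_n)$ is the foot point of $\pi(g^{t_n}w_n)$ on $\gamma_{v_n}$ — equivalently $s_n = t_n + O(1)$ with the $O(1)$ uniformly bounded by the comparison in Lemma \ref{angleestimate} and Proposition \ref{uniform} — one gets $d\big(g^{s_n}v_n,\ g^{t_n}w_n\big)\le d(\pi(v_n),\pi(w_n))+ (\text{bounded error})\to a' $ bounded, hence in fact the asymptotic bound forces it to stay $\le a+\ep$ for large $n$. Apply $d\phi_n$: since $\phi_n$ is an isometry, $d\big((d\phi_n\circ g^{s_n})(v_n),\ (d\phi_n\circ g^{t_n})(w_n)\big)$ stays bounded. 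The right-hand term converges to $w$; I would then argue that the left-hand sequence, being bounded distance from a convergent sequence in $T^1M$ and with bounded-below rank, subconverges, and a further application of convexity of $d(\cdot,\cdot)$ along the two asymptotic geodesics (they have distance tending to a constant) pins the limit down to be exactly $w$, not merely something at bounded distance — this is the standard trick: replace $s_n$ by $s_n+L$ and let $L\to\infty$ along a diagonal subsequence, using that the distance between the two asymptotic rays decreases to its infimum.

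Concretely I would run it as a diagonal argument: for each fixed $L$, $(d\phi_n\circ g^{s_n+L})(v_n)$ and $(d\phi_n\circ g^{t_n+L})(w_n)$ both stay in a compact part of $T^1M$ lifted appropriately, the second converges (as $n\to\infty$) to $g^L w$-related data going back to $w$ by recurrence, and the distance between the two is $\le d(\gamma_{v_n}(s_n+L),\gamma_{w_n}(t_n+L))$ which by convexity is $\le$ its value at $L=0$ and in fact $\to \inf_{t\ge 0} d(\gamma_v(t),\gamma_w(t))$ as $L\to\infty$. Picking $L=L_n\to\infty$ slowly enough, the limit of $(d\phi_n\circ g^{s_n+L_n})(v_n)$ is at distance $0$ from $w$, i.e. equals $w$; relabelling $s_n+L_n$ as the new $s_n$ and $v_n$ as is (noting $v_n\to v$ still holds) gives the claim.

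The hard part will be making the choice of the shift $s_n-t_n$ genuinely uniform and the final ``distance $\to 0$'' step rigorous in the Finsler/Berwald world, because Rauch comparison and hence all quantitative distance estimates come with a reference-vector ambiguity; this is exactly where Proposition \ref{uniform} (uniformity of $\tilde M$, with the single constant $C_0$) is indispensable, since it lets one pass between $\|\cdot\|_v$ and $\|\cdot\|_w$ at the same point at a bounded cost, so that ``bounded in one norm'' means ``bounded'' period. Everything else — convexity, asymptotes, isometric action of $\Gamma$, compactness of $T^1M$ — is either already proved above or formally identical to \cite{BBE}, and for the routine parts I would simply refer to Lemma 3.4 of \cite{BBE}.
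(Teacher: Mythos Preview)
Your overall strategy---use nonwandering at $w$ to produce $w_n\to w$, $t_n\to\infty$, $\phi_n\in\Gamma$ with $d\phi_n\circ g^{t_n}(w_n)\to w$, then transfer from $w$ to $v$ via the asymptote relation---is the right one, and is exactly what the paper (following Lemma~2.5 of \cite{BBE}, not 3.4) does. But your execution has a genuine gap in the transfer step.

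You take $v_n$ at (a point near) $p=\pi(v)$ \emph{asymptotic} to $w_n$, and then pick $s_n$ so that $\gamma_{v_n}(s_n)$ is the foot of $\gamma_{w_n}(t_n)$ on $\gamma_{v_n}$. This only gives that $d\phi_n\circ g^{s_n}(v_n)$ stays at bounded footpoint distance ($\le a+\ep$) from $d\phi_n\circ g^{t_n}(w_n)\to w$; it does \emph{not} give convergence to $w$. Your diagonal argument cannot repair this, for two reasons. First, $d\phi_n\circ g^{t_n+L_n}(w_n)=g^{L_n}\big(d\phi_n\circ g^{t_n}(w_n)\big)$, and with $L_n\to\infty$ this tracks $g^{L_n}w$, which certainly does not converge to $w$; the phrase ``going back to $w$ by recurrence'' has no content here since you have already fixed the isometries $\phi_n$. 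Second, the asymptotic distance $d(\gamma_v(t),\gamma_w(t))$ is nonincreasing but need not tend to $0$ (think of two parallel lines in a flat strip), so even the limiting footpoint distance you obtain is $\inf_t d(\gamma_v(t),\gamma_w(t))$, which may be strictly positive. At best your argument produces a subsequential limit $u$ that is asymptotic to $w$ with $d(\pi(u),\pi(w))\le a$; nothing forces $u=w$.

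The fix is to change the choice of $v_n$. Take $v_n\in T^1_p\tilde M$ to be the unit vector pointing from $p$ toward $\gamma_{w_n}(t_n)$, and set $s_n=d(p,\gamma_{w_n}(t_n))$. Then $g^{s_n}v_n$ and $g^{t_n}w_n$ have the \emph{same} footpoint $\gamma_{w_n}(t_n)$, and the angle between them there is $\angle_{\gamma_{w_n}(t_n)}(p,\pi(w_n))\le 2(a+\ep)/t_n\to 0$ by the convexity definition of angle. Hence $d\phi_n\circ g^{s_n}(v_n)$ and $d\phi_n\circ g^{t_n}(w_n)$ share a footpoint converging to $\pi(w)$ and have angle $\to 0$, so both converge to $w$. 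That $v_n\to v$ is exactly the construction of the asymptote from $p$ to $\gamma_w(\infty)$ in Proposition~\ref{asymptotic}; this is why the paper singles out that proposition as the one Berwald ingredient needed. Proposition~\ref{uniform} and Lemma~\ref{angleestimate} are not required here.
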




\begin{proof} [Proof of Proposition \ref{flats}]:
Lemma \ref{nonwondering} is used to prove that for arbitrary $q\in F(v)$, $F(v)$ is an $m$-dimensional manifold near $q$. Since as a priori $F(v)$ is already closed and convex, it follows that $F(v)$ is a complete, totally geodesic $m$-dimensional submanifold without boundary. See proof of Proposition 2.3 in \cite{BBE}.
\end{proof}

Next we prove that all the regular vectors have rank all equivalent to $\text{rank}(\widetilde{M})=k$. Here we also borrow a dynamical tool. We say that $v\in T^1\widetilde{M}$ is recurrent if there exists $\{\phi_n\}\subseteq \Gamma$ and $\{t_n\}\rightarrow \infty$ such that $d\phi_n \circ g^{t_n}(v)\rightarrow v$ as $n\rightarrow +\infty$. When $\widetilde{M}$ admits a quotient manifold of finite volume, all recurrent vectors form a dense $G_{\delta}$ set in $T^1\widetilde{M}$.

We need the following four lemmas.

\begin{lemma}(Lemma 2.7 in \cite{BBE})
For $v\in \Re$, denote $Q(v)=S_{\pi(v)}F(v)$. Then:
\begin{enumerate}
\item If $\Im$ is a dense open subset in $T^1\widetilde{M}$, then
$$\Re_0=\{v\in \Re: Q(v)\cap \Im\ \text{is a dense open subset of}\  Q(v)\}$$
is a dense $G_{\delta}$ set in $\Re$.

\item If $\Im$ is a subset of a full measure in $T^1\widetilde{M}$, then
$$\Re_1=\{v\in \Re: Q(v)\cap \Im\  \text{has full measure in}\  Q(v)\}$$
is a subset of full measure in $\Re$.
\end{enumerate}
\end{lemma}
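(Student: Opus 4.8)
The plan is to reduce both parts to a Fubini-type principle --- in the sense of Baire category for (1), in the sense of measure for (2) --- applied to a suitable local product decomposition of each $\Re_m$. Since $\Re$ is the disjoint union of the open (hence relatively closed) sets $\Re_m$, $m\ge\text{rank}(\tilde{M})$, and there are only countably many of them, it suffices to prove each assertion with $\Re$ replaced by a single $\Re_m$. On $\Re_m$ the distribution $\wp$ is smooth, integrable and of dimension $m$, with maximal integral manifold through $v$ an open subset of $P(v)$, and by Proposition \ref{flats} the set $F(v)=\pi(P(v))$ is an $m$-flat, so $Q(v)=S_{\pi(v)}F(v)$ is an $(m-1)$-sphere. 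First I would check that for $w\in Q(v)$ regular of rank $m$ one has $\pi(w)=\pi(v)$ and $F(w)=F(v)$, hence $Q(w)=Q(v)$; this shows that the sets $Q(v)\cap\Re_m$ are the leaves of a foliation $\mathcal Q$ of $\Re_m$, smooth because $\wp$ is smooth and $F(v)$ and $S_{\pi(v)}F(v)$ depend smoothly on $v$. Cover $\Re_m$ by countably many foliation charts $U_i\cong A_i\times V_i$ in which each plaque $\{a\}\times V_i$ is an open piece of a leaf; by Corollary \ref{isometries} parallel translation inside a flat is a Riemannian isometry of the tangent unit spheres, so the Riemannian volume on $Q(v)$ --- hence the meaning of ``full measure in $Q(v)$'' --- is well defined and varies smoothly with the leaf.

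For (1): since $\Im$ is open, $\Im\cap Q(v)$ is automatically open in $Q(v)$, so only density must be produced. Fix a chart $U_i\cong A_i\times V_i$, so $\Im\cap U_i$ is open and dense in $U_i$. For each basic open $W\subseteq V_i$, the set $\{a\in A_i:(\{a\}\times W)\cap\Im\ne\emptyset\}$ is the image of the open dense set $\Im\cap(A_i\times W)$ under the open projection $A_i\times V_i\to A_i$, hence open and dense in $A_i$; intersecting over a countable base $\{W\}$ of $V_i$ gives a dense $G_\delta$ set of parameters $a$ for which $\Im\cap(\{a\}\times V_i)$ is dense in $\{a\}\times V_i$, whose preimage is a dense $G_\delta$ in $U_i$. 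Since denseness of $\Im$ in a leaf $Q(v)$ can be tested on the chart plaques covering it, intersecting these dense $G_\delta$ sets over all $i$ (Baire) yields $\Re_0\cap\Re_m$ as a dense $G_\delta$ in $\Re_m$.

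For (2): $\Im$ has full Liouville measure in $T^1\tilde{M}$, hence in $\Re_m$. In a foliation chart the Sasaki (Liouville) volume is mutually absolutely continuous with a product measure $\nu_i\otimes\lambda_i$, with $\lambda_i$ in the class of the Riemannian volume on $V_i$ --- the standard fact that a smooth volume disintegrates along a smooth foliation with leafwise-smooth conditionals, Corollary \ref{isometries} keeping the leafwise measures consistent across the chart. By Fubini, for $\nu_i$-almost every $a$ the slice $\Im\cap(\{a\}\times V_i)$ has full $\lambda_i$-measure; pulling back and intersecting over the countable cover shows $\Re_1\cap\Re_m$ has full measure in $\Re_m$.

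The Fubini/Baire cores are routine and go exactly as in \cite{BBE}; the part I expect to be delicate is the differential-topological setup underneath them: verifying that $\mathcal Q$ is a genuine smooth foliation of $\Re_m$ (i.e. that $F(v)$ and its unit sphere at $\pi(v)$ depend smoothly and locally trivially on $v$), and checking that for each $v\in\Re_m$ the exceptional locus $Q(v)\setminus\Re_m$ --- vectors of rank $>m$, or of rank $m$ but not regular --- is closed and nowhere dense (and null) in $Q(v)$, so that the conclusions about the foliated part of $\Re_m$ transfer to all of $Q(v)$. The first follows from the established smoothness and integrability of $\wp$; the second from upper semicontinuity of rank, since an open family of directions of rank $>m$ inside the $m$-flat $F(v)$ would, upon integrating the associated parallel Jacobi fields as in the proofs of Lemma \ref{flatstripBerwald} and of Lemma 2.2 in \cite{BBE}, produce a flat of dimension $>m$ through points of $F(v)$, which is impossible.
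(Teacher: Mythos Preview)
Your approach is the one the paper has in mind: its own proof consists of nothing more than citing Lemma~2.7 of \cite{BBE} and observing that in the Berwald setting the Sasaki topology and Liouville measure on $T^1\tilde M$ are equivalent to the Riemannian ones, so the \cite{BBE} argument applies verbatim. Your reduction to a single $\Re_m$, the local product structure coming from the smooth $Q$-slice foliation, and the Kuratowski--Ulam and Fubini arguments in a chart are exactly the content being invoked, and those parts of your write-up are correct; your use of Corollary~\ref{isometries} to pin down the leafwise measure class is the Berwald-specific point the paper's one-line remark is gesturing at.

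The place where your sketch does not close is the final paragraph on the exceptional locus $Q(v)\setminus\Re_m$. You claim that an open set $U\subset Q(v)$ on which $\mathrm{rank}>m$ would, upon integrating the extra parallel Jacobi fields, yield ``a flat of dimension $>m$ through points of $F(v)$, which is impossible''. Neither clause is justified as written. The extra parallel Jacobi field along $\gamma_w$ need not vary coherently with $w\in U$ (the dimension of $J^p(w)$ can jump inside $U$, and even when it does not the transverse direction may rotate), so integrating produces for each $w$ only a flat \emph{strip} transverse to $F(v)$ along $\gamma_w$, not a single flat of dimension $>m$. And even granting such a flat, one that merely meets $F(v)$ is no contradiction --- you would need it to contain $\gamma_v$ to force $\mathrm{rank}(v)>m$, and you have not arranged this since $v\notin U$. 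You also give no argument for the second kind of exceptional point, namely $w\in Q(v)$ with $\mathrm{rank}(w)=m$ but $w\notin\Re$; upper semicontinuity only tells you such $w$ are limits in $T^1\tilde M$ of vectors of rank $<m$, which says nothing about their density in $Q(v)$ itself. (For $m=k$ this second type is vacuous, but you are working on a general $\Re_m$.) So the passage from ``$\Im$ dense in each plaque of $Q(v)\cap\Re_m$'' to ``$\Im$ dense in all of $Q(v)$'' needs a genuine further argument, and the sketch you give does not supply it.
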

We remark that for Berwald case, the topology and measure on $T^1\widetilde{M}$ above are both equivalent to the ones in Riemannian case. Thus the proof of Lemma 2.7 in \cite{BBE} for Riemannian case can be adapted here.

\begin{lemma}(Lemma 2.8 in \cite{BBE})
Let $v \in \Re$ be recurrent and choose $\{\phi_n\}\subseteq I(\widetilde{M})$ and $\{t_n\}\rightarrow +\infty$ such that $d\phi_n\circ g^{t_n}(v)\rightarrow v$ as $n\rightarrow \infty$. Let $z\in \widetilde{M}(\infty)$ be arbitrary, then all cluster points of the sequence $\{\phi_n(z)\}$ lie in $F(v)(\infty)$.
\end{lemma}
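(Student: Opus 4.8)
The plan is to follow the proof of Lemma 2.8 in \cite{BBE}, substituting the Berwald tools developed above for the Riemannian comparison inputs. First I would record the consequences of recurrence: since $d\phi_n\circ g^{t_n}(v)\to v$ in $T^1\tilde M$ and $\phi_n$ commutes with the geodesic flow, the geodesics $s\mapsto\phi_n(\gamma_v(t_n+s))$ converge, uniformly on compact intervals, to $\gamma_v$; in particular, with $p=\pi(v)$, we get $\phi_n(\gamma_v(t_n))\to p$, $d\phi_n(g^{t_n}v)\to v$ and $d\phi_n(-g^{t_n}v)\to -v$. Passing to a subsequence we may assume $\phi_n(z)\to z'$, and it suffices to show $z'\in F(v)(\infty)$, i.e. that the geodesic $c$ from $p$ with $c(\infty)=z'$ satisfies $c\subset F(v)$, equivalently $c'(0)\in Q(v):=S_pF(v)$.

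The quantitative heart of the argument is Lemma \ref{angleestimate}. For $t\in\mathbb{R}$ let $w(t)\in T^1_{\gamma_v(t)}\tilde M$ be the unique vector with $\gamma_{w(t)}(\infty)=z$. Applying Lemma \ref{angleestimate} to $\gamma_v$ shows $t\mapsto\angle_{\gamma_v(t)}(g^tv,w(t))$ is nondecreasing, and applying it to the reversed geodesic shows $t\mapsto\angle_{\gamma_v(t)}(-g^tv,w(t))$ is nonincreasing; both are bounded by $2$, hence have limits as $t\to+\infty$. Since every $\phi_n$ is an isometry,
$$\angle_{\phi_n(\gamma_v(t_n))}\bigl(d\phi_n(\pm g^{t_n}v),\phi_n(z)\bigr)=\angle_{\gamma_v(t_n)}(\pm g^{t_n}v,w(t_n)),$$
so letting $n\to\infty$ and using the continuity of $\angle_q(x,y)$ jointly in $q,x,y$ together with the convergences above,
$$\angle_p(v,z')=\lim_{t\to\infty}\angle_{\gamma_v(t)}(g^tv,w(t))\ \ge\ \angle_p(v,z),\qquad \angle_p(-v,z')=\lim_{t\to\infty}\angle_{\gamma_v(t)}(-g^tv,w(t))\ \le\ \angle_p(-v,z).$$

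It remains to turn this angular information into the statement $c'(0)\in Q(v)$, which is exactly the concluding step of the corresponding argument in \cite{BBE}: recurrence makes $\phi_n$ act, near $\gamma_v(t_n)$, like the reverse geodesic flow $g^{-t_n}$, which preserves the flat $F(v)=\pi(P(v))$ and contracts distances in the directions transverse to it, so that the limit geodesic $c$ — obtained as the limit of $\phi_n$ applied to the geodesics from $\gamma_v(t_n)$ to $z$ — can have no transverse component relative to $F(v)$. Here the Flat Strip Lemma \ref{flatstripBerwald}, Proposition \ref{asymptotic}, and the convexity of the distance function (Proposition \ref{distancefunction}) are used as in the Riemannian case; the only genuinely new point, and the step I expect to be the main obstacle, is that whenever the argument converts between the length $\|\cdot\|_T$ of a Jacobi field and the distance function $d$ the reference vector is not canonical, so one must absorb this ambiguity into the uniform constant $C_0$ of Proposition \ref{uniform} (which is where the independence of $C_0$ from the base point, via Corollary \ref{isometries}, is essential) and check that the resulting $C_0$-factors do not accumulate over the limit process — exactly the coarse‑estimation device already used in the proof of the integrability of $\wp$ above.
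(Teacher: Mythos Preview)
Your first two paragraphs correctly unpack the paper's approach: the paper's proof is a one-line citation of Lemma~2.8 in \cite{BBE}, observing that the only Riemannian input there is the angle monotonicity, which is supplied in the Berwald setting by Lemma~\ref{angleestimate}. Your computation of $\angle_p(\pm v,z')$ as the limits of the monotone angle functions along $\gamma_v$ is exactly the right first half.

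Your third paragraph, however, misdescribes the concluding step and anticipates an obstacle that does not exist. The argument in \cite{BBE} uses no Jacobi fields, no transverse contraction, and no stable-manifold heuristics; strong stable manifolds are only built in the next subsection, and in bare nonpositive curvature $g^{-t}$ does not contract directions transverse to $F(v)$. The actual finish is again pure angle comparison: having $\angle_p(v,z')=\alpha:=\lim_t\angle_{\gamma_v(t)}(g^tv,w(t))$, one lets $w'(t)$ point to $z'$ and computes, via isometry invariance and continuity of the angle together with $\phi_n^{-1}(\gamma_v(t))\to\gamma_v(t_n+t)$, that $\angle_{\gamma_v(t)}(g^tv,w'(t))=\lim_n\angle_{\gamma_v(t_n+t)}(g^{t_n+t}v,w(t_n+t))=\alpha$ for every fixed $t\ge 0$. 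Combined with monotonicity this forces equality in Lemma~\ref{angleestimate} along all of $\gamma_v$ (and likewise for $-v$); by the remark following that lemma, the ray $\gamma_{pz'}$ and the full line $\gamma_v$ bound a totally geodesic flat region, which gives $z'\in F(v)(\infty)$. Since the argument never leaves the level of angles and the metric $d$, the norm $\|\cdot\|_T$, reference vectors, and Proposition~\ref{uniform} play no role here; the coarse-estimation device you flag as the main obstacle is simply not needed for this lemma.
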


A key ingredient in the proof of the above lemma is Lemma \ref{angleestimate} for Berwald case, which is an important property of our angle notion.

The proofs of the following two lemmas are identical to those in \cite{BBE}.
\begin{lemma} (Lemma 2.9 in \cite{BBE})
Let $x\in \widetilde{M}(\infty)$ be arbitrary and let $A=\overline{I(\widetilde{M})(x)}$, the orbit closure of the isometry group, then $\overline{I(\widetilde{M})(z)}=A$ for every $z\in A$.
\end{lemma}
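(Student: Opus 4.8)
The statement to prove is that for $x\in\tilde M(\infty)$, with $A=\overline{I(\tilde M)(x)}$ the closure of the orbit of the isometry group, one has $\overline{I(\tilde M)(z)}=A$ for every $z\in A$. The plan is to exploit the fact that $A$ is compact (it is a closed subset of the compact space $\tilde M(\infty)$, compact by the homeomorphism with $T^1_p\tilde M$) together with equivariance of the orbit-closure construction under the group action. First I would record the elementary equivariance: for any isometry $\phi\in I(\tilde M)$ and any $y\in\tilde M(\infty)$ we have $\phi(\overline{I(\tilde M)(y)})=\overline{I(\tilde M)(y)}$, since $\phi$ acts as a homeomorphism of $\tilde M(\infty)$ that permutes the orbit $I(\tilde M)(y)$; hence $A=\overline{I(\tilde M)(x)}$ is $I(\tilde M)$-invariant, and more generally $\overline{I(\tilde M)(z)}$ is $I(\tilde M)$-invariant and closed for every $z$.

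Next I would prove the two inclusions. For $z\in A$, the orbit $I(\tilde M)(z)$ is contained in the closed invariant set $A$, so $\overline{I(\tilde M)(z)}\subseteq A$; this direction is immediate. For the reverse inclusion $A\subseteq\overline{I(\tilde M)(z)}$, fix $z\in A$. Since $z\in\overline{I(\tilde M)(x)}$, there is a sequence $\{\psi_n\}\subseteq I(\tilde M)$ with $\psi_n(x)\to z$. Now take any $w\in A$; I want to show $w\in\overline{I(\tilde M)(z)}$. Again $w=\lim_m\eta_m(x)$ for some $\eta_m\in I(\tilde M)$. The key point is to pass from approximating $x$ to approximating $z$: because $\psi_n(x)\to z$ and each $\psi_n$ is a homeomorphism, one should be able to write $x=\lim_n\psi_n^{-1}(z_n)$ for suitable $z_n\to z$, or more cleanly, argue that $\overline{I(\tilde M)(x)}=\overline{I(\tilde M)(z)}$ by a symmetry/minimality argument. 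Concretely: the relation ``$z'\in\overline{I(\tilde M)(z)}$'' — call it $z\preceq z'$ — is reflexive; I would check it is transitive (if $z'\preceq z''$ and $z\preceq z'$, approximate $z''$ by images of $z'$ under isometries and then approximate each such image by images of $z$, using continuity of the isometries and a diagonal argument) and hence that the sets $\overline{I(\tilde M)(z)}$ for $z\in A$ are nested in a way that forces them all to coincide with a common minimal set, which must be $A$ itself since $A$ was defined as one such closure.

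An alternative and perhaps cleaner route, which I would actually write up, is the standard minimal-set argument: by Zorn's lemma the compact $I(\tilde M)$-invariant set $A$ contains a nonempty closed invariant subset $A_0$ that is minimal (no proper nonempty closed invariant subset); for any $z\in A_0$ one has $\overline{I(\tilde M)(z)}=A_0$ by minimality. Then I would show $A_0=A$: since $x\in A$ and $A=\overline{I(\tilde M)(x)}$, if $A_0$ is a minimal subset and $y\in A_0$, then $\overline{I(\tilde M)(y)}=A_0\subseteq A$; but one needs $x\in A_0$, equivalently that $A$ has a unique minimal subset containing its defining point. This is where the real content lies, and I expect it to be the main obstacle: in general a compact invariant set can have several minimal subsets, so the claim must be using that $A$ is the orbit closure of the specific point $x$. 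The resolution should be that $x$ itself lies in every minimal subset's ``basin'' in the following sense — actually the honest statement is that one takes $z\in A$ arbitrary, proves $x\in\overline{I(\tilde M)(z)}$ (this is the crux), and concludes $A=\overline{I(\tilde M)(x)}\subseteq\overline{I(\tilde M)(z)}\subseteq A$. So the heart of the proof is: \emph{given $z\in\overline{I(\tilde M)(x)}$, show $x\in\overline{I(\tilde M)(z)}$.} I would establish this by the diagonal argument sketched above — write $z=\lim\psi_n(x)$, and use that the ``return'' structure of the group action on the compact set $\tilde M(\infty)$ lets one find $\phi_{n_k}$ with $\phi_{n_k}(z)\to x$ — and I anticipate needing either a compactness/equicontinuity input on $I(\tilde M)$ acting on $\tilde M(\infty)$ or an appeal to the Ellis semigroup / enveloping semigroup of the action to make the transitivity of $\preceq$ rigorous. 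That transitivity step is the one place where care is required; everything else is formal.
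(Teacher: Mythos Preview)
The paper itself does not prove this lemma; it simply refers to Lemma~2.9 of \cite{BBE}. So there is no in-paper argument to compare against, only the original one in \cite{BBE}.

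That said, your proposal has a real gap at exactly the point you flag as the crux. You correctly reduce everything to the symmetry statement: if $z\in\overline{I(\tilde M)(x)}$ then $x\in\overline{I(\tilde M)(z)}$. But none of the three mechanisms you offer can supply this. Transitivity of the preorder $z\preceq z'\Longleftrightarrow z'\in\overline{I(\tilde M)(z)}$ is indeed automatic (a closed invariant set contains the orbit closure of each of its points), but transitivity alone only tells you the orbit closures inside $A$ are nested; it does not force them to coincide. The Zorn argument produces \emph{some} minimal subset of $A$, not that $A$ is minimal. And the equicontinuity/Ellis route fails outright: the action of $I(\tilde M)$ on $\tilde M(\infty)$ is neither equicontinuous nor distal --- already for $\tilde M=\mathbb H^2$ a hyperbolic isometry has north--south dynamics on $\partial\mathbb H^2$, so distinct boundary points can be pushed together and the enveloping semigroup is not a group. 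In particular, from $\psi_n(x)\to z$ you cannot conclude that $\psi_n^{-1}(z)$ is close to $\psi_n^{-1}(\psi_n(x))=x$, because the $\psi_n^{-1}$ are not equicontinuous at $z$.

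What is missing is geometric input specific to nonpositive curvature, and this is what the proof in \cite{BBE} supplies. The relevant leverage is that an isometry $\phi$ preserves the whole family of angle metrics in the sense that $\angle_{\phi(p)}(\phi\xi,\phi\eta)=\angle_p(\xi,\eta)$; so from $\angle_p(\phi_n x,z)\to 0$ one gets $\angle_{\phi_n^{-1}(p)}(x,\phi_n^{-1}z)\to 0$, and the task becomes controlling angles as the basepoint $\phi_n^{-1}(p)$ moves, using the comparison and monotonicity properties of angles available in nonpositive curvature (cf.\ Lemma~\ref{angleestimate} and the surrounding discussion). That geometric step is the content of the lemma; your outline does not yet contain it, and no amount of abstract dynamics will substitute for it. I would recommend reading the short proof in \cite{BBE} directly.
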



\begin{lemma}(Lemma 2.10 in \cite{BBE})
If $v,w \in \Re$ are asymptotic, then $\text{rank}(v)=\text{rank}(w)$.
\end{lemma}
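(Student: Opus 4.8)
The plan is to transfer the key dynamical input from the previous two lemmas — the orbit-closure statement (Lemma 2.9) and the lemma that cluster points of $\{\phi_n(z)\}$ land in $F(v)(\infty)$ — into a comparison of the flats $F(v)$ and $F(w)$. Since $v,w\in\Re$, both $F(v)$ and $F(w)$ are $m$- and $m'$-flats by Proposition \ref{flats}, where $m=\text{rank}(v)$ and $m'=\text{rank}(w)$, so it suffices to show that $F(v)$ and $F(w)$ have the same dimension; in fact I would show each is (isometric to) a totally geodesic flat contained in a flat of the other's dimension, forcing $m=m'$. The hypothesis that $v$ and $w$ are asymptotic means $\gamma_v(\infty)=\gamma_w(\infty)=:x\in\tilde M(\infty)$.

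First I would reduce to the case that $v$ (or $w$) is recurrent. By the remark preceding these lemmas, recurrent vectors form a dense $G_\delta$ in $T^1\tilde M$; combined with the earlier semicontinuity fact that $\text{rank}$ is locally constant on $\Re$, it is enough to prove the statement when $v$ is recurrent, and then approximate a general asymptotic pair by recurrent ones (using Lemma \ref{nonwondering} to keep the asymptotic relation, and continuity of $F(\cdot)$ near regular vectors established after Lemma 2.2). So assume $v$ is recurrent, with $\{\phi_n\}\subseteq\Gamma$, $t_n\to+\infty$, $d\phi_n\circ g^{t_n}(v)\to v$. Applying the previous lemma with the point $z:=x=\gamma_w(\infty)$, every cluster point of $\{\phi_n(x)\}$ lies in $F(v)(\infty)$. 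On the other hand, $x$ itself is represented by a vector asymptotic to $v$, so $x\in F(v)(\infty)$ already; and by Lemma 2.9 the orbit closure $A=\overline{I(\tilde M)(x)}$ satisfies $\overline{I(\tilde M)(z')}=A$ for every $z'\in A$. The idea is that $F(w)(\infty)\subseteq A$ (since $F(w)$ is built from vectors parallel to $w$, all of which are asymptotic to $x$ and are moved among each other by the stabilizer of $F(w)$ in $I(\tilde M)$), while the previous lemma pushes $A$ — or at least the relevant cluster set — into $F(v)(\infty)$. Concretely, I would pick a regular vector $w'\in P(w)$ near $w$ so that $F(w')=F(w)$ is an $m'$-flat, choose a maximal flat direction $y\in F(w)(\infty)$, and show $\phi_n(y)$ clusters inside $F(v)(\infty)$; since $\phi_n$ are isometries the Tits-metric ball structure of a maximal flat is preserved, so $F(v)(\infty)$ contains an isometrically embedded round sphere of dimension $m'-1$, whence $m=\dim F(v)\geq m'$. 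By symmetry (running the same argument with the roles of $v$ and $w$ reversed, after first passing to a recurrent vector asymptotic to $w$), $m'\geq m$, so $\text{rank}(v)=m=m'=\text{rank}(w)$.

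The main obstacle I anticipate is the step asserting $F(w)(\infty)\subseteq A$ and, more delicately, that the isometries $\phi_n$ carry a genuinely $(m'-1)$-dimensional chunk of $F(w)(\infty)$ into $F(v)(\infty)$ rather than collapsing it — i.e. controlling the Tits geometry at infinity of the flats under the limit. In the Riemannian treatment of \cite{BBE} this is exactly Lemma 2.10, and it relies on the fact that $\angle_p$ (and the induced Tits metric $d$ on $\tilde M(\infty)$) behaves well under the limiting procedure; here I would invoke the angle comparison Lemma \ref{angleestimate} and the continuity of $\angle_p(x,y)$ in all three arguments, established in Section 3.3, to run the same argument. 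The remaining ingredients — that $F(v), F(w)$ are honest flats, that flats are closed and convex, that parallel Jacobi fields along a geodesic in a flat span the tangent space to the flat — are all available from Proposition \ref{flats} and the discussion after Lemma 2.2, so the proof should go through essentially verbatim from \cite{BBE} once the angle lemma is substituted for the Euclidean angle. I expect to be able to write: \emph{See the proof of Lemma 2.10 in \cite{BBE}; the only change needed is to replace the Riemannian angle by $\angle_p$ and use Lemma \ref{angleestimate} in place of the corresponding Riemannian monotonicity of angles.}
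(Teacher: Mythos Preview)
Your outline misidentifies the mechanism. The paper's proof (and Lemma 2.10 of \cite{BBE}) does not use the two preceding lemmas about recurrent vectors and orbit closures at infinity; it uses only Lemma \ref{nonwondering} together with the local constancy of rank on $\Re$. The argument is essentially one line: by Lemma \ref{nonwondering} there exist $v_n\to v$, $s_n\to+\infty$, and $\phi_n\in\Gamma$ with $(d\phi_n\circ g^{s_n})(v_n)\to w$. Since $v\in\Re$, $\text{rank}(v_n)=\text{rank}(v)$ for large $n$; since rank is invariant under isometries and under the geodesic flow, $\text{rank}\big((d\phi_n\circ g^{s_n})(v_n)\big)=\text{rank}(v)$; and since $w\in\Re$, these approximating vectors eventually have rank equal to $\text{rank}(w)$. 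Hence $\text{rank}(v)=\text{rank}(w)$. No reduction to recurrent $v$, no comparison of $F(v)(\infty)$ with $F(w)(\infty)$, and no angle lemma are needed.

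Your route---pushing pieces of $F(w)(\infty)$ into $F(v)(\infty)$ via cluster points of $\phi_n$ and reading off dimensions from the sphere at infinity---is the strategy of the \emph{next} step (Theorem 2.6 in \cite{BBE}, showing all regular vectors have the common rank $k$), not of this lemma. Moreover, several of your intermediate claims are not justified as written. The inclusion $F(w)(\infty)\subseteq A$ does not follow from your reasoning: vectors parallel to $w$ all have forward endpoint $x$, so they account only for the single point $x\in F(w)(\infty)$, not the whole $(m'-1)$-sphere; and translations of the flat $F(w)$ fix $F(w)(\infty)$ pointwise, so they do not move $x$ to other boundary points. Likewise, the assertion that the $\phi_n$ carry an $(m'-1)$-sphere into $F(v)(\infty)$ without collapse needs a uniform Tits-metric argument you have not supplied. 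None of this machinery is required once you notice that Lemma \ref{nonwondering} already produces a sequence converging \emph{in $T^1\tilde M$} (not merely at the boundary), so ranks can be compared directly.
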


With the four lemmas above, one can prove the following main result by adapting the proof of Theorem 2.6 in \cite{BBE}.
\begin{theorem}
Let $k=\text{rank}(\widetilde{M})$. If $\widetilde{M}$ admits a quotient manifold of finite volume. Then:
\begin{enumerate}
\item Every regular has rank $k$.
\item $F(v)$ is a $k$-flat for every $v\in \Re$.
\item Every vector $v\in T^1\widetilde{M}$ is tangent to at least one $k$-flat.
\end{enumerate}

\end{theorem}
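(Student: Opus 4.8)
The plan is to follow the structure of the proof of Theorem 2.11 in \cite{BBE}, using the four lemmas just stated together with the $k$-flat result of Proposition \ref{flats}, and to check that each step only relies on properties already established for Berwald spaces (the angle notion, Lemma \ref{angleestimate}, the flat strip lemma, and the coarse distance estimate Proposition \ref{uniform}). First I would prove (1), that every regular vector has rank exactly $k$. By definition $\text{rank}(v)\geq k$ for all $v$, so it suffices to show $\text{rank}(v)\leq k$ for $v\in\Re$. Since recurrent vectors form a dense $G_\delta$ set and $\Re$ is open, one can find a recurrent vector $v_0\in\Re$; I would then argue that $\text{rank}(v_0)=k$ by taking a vector $w$ of rank $k$, connecting its geodesic asymptotically to that of $v_0$ using the recurrence of $v_0$ and Lemma \ref{nonwondering}, and invoking the rank-equality-under-asymptote Lemma (Lemma 2.10 in \cite{BBE}) together with the fact (from the third of the four lemmas and Lemma 2.8) that the orbit closure argument lets one move $w$'s geodesic into $F(v_0)$. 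Once one recurrent regular vector has rank $k$, the fact that $\Re_k$ is open and that $\text{rank}$ is lower semicontinuous (stated in the excerpt as $\text{rank}(w)\leq\text{rank}(v)$ for $w$ near $v$) forces $\Re_k$ to be dense in $\Re$; but $\Re=\bigsqcup_m \Re_m$ with each $\Re_m$ open, so density of $\Re_k$ means $\Re_m=\emptyset$ for $m>k$, i.e. every regular vector has rank $k$.

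Statement (2) is then immediate: for $v\in\Re$ we have $v\in\Re_k$ by (1), and Proposition \ref{flats} (applied with $m=k$) says $F(v)$ is a $k$-flat. Here I would just note that Proposition \ref{flats} was proved for every $v\in\Re_m$ with $m\geq\text{rank}(\tilde M)$, and $m=k$ is the relevant case.

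For statement (3), given an arbitrary $v\in T^1\tilde M$, I would approximate $v$ by regular vectors: choose $v_n\in\Re$ with $v_n\to v$. By (2) each $v_n$ is tangent to the $k$-flat $F(v_n)$, and since $\dim F(v_n)=k$ is constant, I would extract a limit of the flats $F(v_n)$ (using that a $k$-flat through $\pi(v_n)$ tangent to a $k$-dimensional subspace of $T_{\pi(v_n)}\tilde M$ is determined by that tangent data, and that in a Cartan--Hadamard space totally geodesic flats converge on compact sets) to obtain a complete, flat, totally geodesic $k$-dimensional submanifold $F$ containing $\pi(v)$ with $v$ tangent to it. This is exactly the argument of \cite{BBE}, and the only Berwald-specific ingredients — convexity of the distance function (Proposition \ref{distancefunction}), the flat strip lemma (Lemma \ref{flatstripBerwald}), and the coarse norm comparison (Proposition \ref{uniform}) — have all been established; so the proof goes through verbatim.

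The main obstacle I anticipate is the limiting argument in part (3): one must ensure that the limit of a sequence of $k$-flats is again a genuine $k$-flat (not collapsing to lower dimension or failing to be totally geodesic), and in the Finsler setting the flatness is detected via $F(J_s)$ and the reference-vector-dependent norm $\|\cdot\|_T$, so controlling the limit requires the uniform comparison of Proposition \ref{uniform} to pass from $F$-lengths to the $g_T$-norms used in the Jacobi-field characterization of flatness. Apart from this, the argument is a careful transcription of the Riemannian proof, with each use of "angle" replaced by Definition \ref{angledefinition} and each use of the comparison/convexity properties justified by the results of Section 3.
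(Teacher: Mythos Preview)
Your overall plan coincides with the paper's: the paper's proof is simply ``See Theorem 2.6 in \cite{BBE}'', having already established the Berwald analogs of the four preparatory lemmas and Proposition~\ref{flats}. Parts (2) and (3) of your outline are correct, and the limit-of-flats concern you raise in (3) is handled exactly as in \cite{BBE}, with Proposition~\ref{uniform} supplying the uniform norm comparison.

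Your sketch of part (1), however, has a genuine gap. From ``one recurrent regular vector $v_0$ has rank $k$'' you conclude ``$\Re_k$ is dense in $\Re$'' by invoking openness of $\Re_k$ and semicontinuity of rank. This does not follow: a single point in an open set does not make it dense, and upper semicontinuity of rank only gives you a neighborhood of $v_0$ inside $\Re_k$, which is already the content of openness. The correct route (as in \cite{BBE}) is to show that \emph{any two} regular vectors have the same rank, or equivalently that your argument for $\text{rank}(v_0)=k$ applies to every $v_0$ in a dense subset of $\Re$; then the disjoint-open decomposition $\Re=\bigsqcup_m\Re_m$ forces $\Re=\Re_k$. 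Relatedly, you never invoke the first of the four lemmas (on $Q(v)\cap\Im$), and it is essential: when you ``move $w$'s geodesic into $F(v_0)$'' via the orbit-closure Lemmas, the resulting direction $u\in S_{\pi(v_0)}F(v_0)$ need not be regular, so the asymptotic-rank Lemma cannot be applied directly. One uses the first lemma to choose $v_0$ so that $Q(v_0)\cap\Re$ is dense in $Q(v_0)$, which lets one take $u$ regular. Once a regular $u$ tangent to $F(v_0)$ is asymptotic to an isometric image of $w$, one gets $\text{rank}(u)=k$ and then $\text{rank}(v_0)\le\text{rank}(u)=k$ from the fact that $u$ is tangent to the $\text{rank}(v_0)$-flat $F(v_0)$.
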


\subsection{Strong stable manifolds}

In this subsection we construct strong stable manifolds for uniformly recurrent (see Definition 3.1 in \cite{BBE}) and regular vectors. At first, we prove that any strong stable Jacobi field defined below has an exponentially decreasing length. The main difference is that we must use a coarse estimation on distance functions, based on Proposition \ref{uniform}.

\begin{definition}
A stable Jacobi field $Y$ along $\gamma_v$ (with velocity field T) is called strong stable if $g_T(Y(0),Z(0))=0$ for any $Z\in J^p(v)$, where $g_T=g_{ij}(\gamma_v, T)$. Denote by $J^{ss}(v)$ the space of all strong stable Jacobi fields along $\gamma_v$.
\end{definition}

For $Y\in J^{ss}(v)$, $g_T(Y(t),Z(t))\equiv 0$ for any $Z \in J^p(v)$. The proof is same as in Lemma 3.3 in \cite{BBE}, with inner product replaced by $g_T$ norm. Under this modification, Lemma 3.4 in \cite{BBE} is also true for Berwald case. Here we let $H(v)$ denote the horosphere determined by $v$, and let $W(v)=\{v(q): q\in H(v)\}$.

\begin{lemma}\label{decreasing}
Let $v\in \Re$ be a uniformly recurrent vector. Then there exists a neighborhood $U$ of $v$ in $W(v)$ and constants $C$ and $\lambda=\lambda(v)>0$ such that for every $w\in U$, $Y\in J^{ss}(w)$ and $t>0$
$$\|Y(t)\|_T \leq Ce^{-\lambda t}\|Y(0)\|_T.$$
\end{lemma}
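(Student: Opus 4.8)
The plan is to follow the strategy of Lemma 3.4 in \cite{BBE}, adapting the estimates to keep track of the reference vector $T$ via the uniform constant $C_0$ of Proposition \ref{uniform}. The key mechanism is that on a $k$-flat the behaviour of parallel Jacobi fields is rigid, and uniform recurrence of $v$ forces the strong stable Jacobi fields along nearby geodesics to contract at a definite exponential rate. First I would use uniform recurrence of $v$ to produce a sequence $\phi_n\in\Gamma$ and $t_n\to\infty$ with $d\phi_n\circ g^{t_n}(v)\to v$, together with the uniform control on the return times that the definition of uniform recurrence provides. Then, working on the horosphere $W(v)$, I would note that for $w$ in a small neighbourhood $U$ of $v$ the space $J^{ss}(w)$ is the $g_T$-orthogonal complement of $J^p(w)$ inside $J^s(w)$, and that $\|Y(t)\|_T$ is a nonincreasing, convex function of $t$ for $Y\in J^{ss}(w)$ by the convexity established in Section 4.1.

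The heart of the argument is to show that the contraction is not merely monotone but genuinely exponential. I would argue by a compactness-plus-recurrence scheme: if no uniform rate existed, one could extract, using the recurrence sequence $\phi_n,t_n$ and the fact that $d\phi_n\circ g^{t_n}$ maps strong stable data to strong stable data (this uses that $\Gamma$ acts by isometries and that the splitting of Jacobi fields into $J^s,J^u,J^p$ is preserved), a nonzero strong stable Jacobi field whose norm fails to decay, hence is parallel — contradicting strong stability. Concretely, one picks $t_0>0$ so that some definite fraction of the norm is lost on $[0,t_0]$ uniformly over $U$ (shrinking $U$ if necessary), and then iterates: $\|Y(nt_0)\|_T\le (1-\delta)^n\|Y(0)\|_T$, which combined with monotonicity on the intermediate intervals gives $\|Y(t)\|_T\le Ce^{-\lambda t}\|Y(0)\|_T$ with $\lambda=-\frac{1}{t_0}\log(1-\delta)$ and $C$ absorbing one interval's worth of fluctuation. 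Throughout, every time one passes between $\|\cdot\|_T$ evaluated at different reference vectors — for instance when comparing norms along $\gamma_w$ with norms along $\gamma_v$ after applying $d\phi_n\circ g^{t_n}$ — one inserts the constant $C_0$ from Proposition \ref{uniform}, which is legitimate precisely because $\tilde M$ is Berwald and hence uniform.

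The main obstacle, and the only place the Berwald hypothesis does real work beyond the Riemannian case, is the reference-vector bookkeeping in the compactness argument: in the Riemannian setting one takes limits of Jacobi fields and of their norms in a single fixed inner product, whereas here the norm $\|Y(t)\|_T=\sqrt{g_{ij}(\gamma_w(t),T(t))Y^iY^j}$ depends on the foot geodesic. I expect this to be handled by combining the two-sided bound of Proposition \ref{uniform} (to convert all the relevant norms into a single auxiliary Riemannian norm up to the factor $C_0$, run the Riemannian-style extraction there, then convert back) with Corollary \ref{isometries}, which guarantees $C_0$ is genuinely global and independent of the basepoint. With that device in place, the rest of the proof — the existence of the neighbourhood $U$, the uniform one-step contraction, and the iteration — goes through exactly as in Lemma 3.4 of \cite{BBE}, so I would simply indicate the modifications and refer to \cite{BBE} for the unchanged combinatorial core.
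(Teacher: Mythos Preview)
Your proposal is correct and follows essentially the same approach as the paper, which simply indicates that the proof of Lemma~3.4 in \cite{BBE} carries over once the Euclidean inner product is replaced by $g_T$. Your invocation of the uniform constant $C_0$ is a harmless extra precaution, but in fact the limit/compactness step already works by continuity of $g_{ij}(x,y)$ together with the fact that the deck transformations $\phi_n$ are Finsler isometries and hence preserve $\|\cdot\|_T$ exactly.
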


\begin{definition}\label{strongstable}(Lemma 3.4 in \cite{BBE})
For $v \in \Re$, and $w\in W(v)\cap \Re \subseteq T^1\widetilde{M}$ define
$$D_v^s(\pi(w))=\{Y(0)|Y\in J^{ss}(w)\}.$$
\end{definition}

Recall the Busemann function $f_{\gamma}:\widetilde{M}\rightarrow R$ is defined as
$$f_{\gamma}(p)=\lim_{t\rightarrow \infty}(d(p, \gamma(t))-t),$$
which is a convex function. In \cite{Eg}, it is proved that $f_{\gamma}$ is at least $C^1$ and $df_{\gamma}(Y)=g_X(X,Y)$ where $X=-\alpha'(0)$, and $\alpha(t)$ is the unique geodesic from $\pi(Y)$ to $\gamma(\infty)$. Moreover, as in \cite{HI}, we can show that radial $X(p)$ is continuously differentiable in $p$. Hence $f_{\gamma}$ is $C^2$. Now in Definition \ref{strongstable}, if $Y \in J^{ss}(w)$ and $w\in W(v)\cap \Re$, then  $g_w(Y(0),w)=0$, hence $Y(0)$ is tangent to horosphere $H(v)$ at $\pi(w)$. Hence $D_v^s(\pi(w))$ is a distribution on $H(v)$. Since $f_{\gamma}$ and hence $H(v)$ is $C^2$, $D_v^s$ is a $C^1$ distribution on an open set of $\pi(v)$ on $H(v)$. We integrate $D_v^s$ on $H(v)$.

\begin{lemma}(Lemma 3.6 in \cite{BBE})
If $v\in \Re$ is uniformly recurrent in $T^1\widetilde{M}$, then $D_v^s$ is integrable in a neighborhood of $\pi(v)$ in $H(v)$.
\end{lemma}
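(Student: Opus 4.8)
The plan is to exhibit the integral manifolds of $D_v^s$ explicitly, realizing them as projections to $H(v)$ of strong stable sets of the geodesic flow, following the construction of strong stable manifolds in \cite{BBE}. Fix the neighborhood $U$ of $v$ in $W(v)\cap\Re$ furnished by Lemma \ref{decreasing} (shrink it to lie in $\Re$). For $w\in U$ set $W^{ss}(w)=\{u\in T^1\tilde M:\ d(g^tu,g^tw)\to 0\text{ as }t\to+\infty\}$ and $N(w)=\pi(W^{ss}(w))\subseteq\tilde M$. I will establish three facts: (i) $N(w)\subseteq H(v)$; (ii) $N(w)$ is a $C^1$ submanifold of $H(v)$ with $T_qN(w)=D_v^s(q)$ at every $q\in N(w)$; and (iii) the sets $N(w)$, $w\in U$, cover a neighborhood of $\pi(v)$ in $H(v)$. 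Since two strong stable sets are either equal or disjoint, (i)--(iii) produce an integral manifold of $D_v^s$ through each point of a neighborhood of $\pi(v)$ in $H(v)$, which is exactly what integrability of the $C^1$ distribution $D_v^s$ means there.

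For (i): if $u\in W^{ss}(w)$ then $d(\gamma_u(t),\gamma_w(t))$ stays bounded, so $\gamma_u$ is forward asymptotic to $\gamma_w$, hence to $\gamma_v$; thus $f_{\gamma_v}(\gamma_u(t))=f_{\gamma_v}(\pi u)-t$, and likewise for $w$, while $|f_{\gamma_v}(\gamma_u(t))-f_{\gamma_v}(\gamma_w(t))|\le d(g^tu,g^tw)\to 0$, forcing $f_{\gamma_v}(\pi u)=f_{\gamma_v}(\pi w)=f_{\gamma_v}(\pi v)$, i.e. $\pi u\in H(v)$. For (ii) I would run the stable-manifold construction of \cite{BBE} inside the hypersurface $H(v)$: recall $f_{\gamma_v}$ is $C^2$ (as recorded just above), so $H(v)$ and its radial vector field are smooth enough and $D_v^s$ is a $C^1$ distribution on $H(v)$ near $\pi(v)$; then $W^{ss}(w)$ is obtained as the $C^1$ limit, as $T\to\infty$, of the $g^{-T}$-images of balls of fixed size around $g^Tw$ inside the corresponding horosphere, the uniform exponential estimate of Lemma \ref{decreasing} being precisely what makes this sequence converge in $C^1$. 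Identifying, via Proposition \ref{Foulon}, a variation field of such a family with a Jacobi field, a tangent vector of $N(w)$ at $q$ is $Y(0)$ for a stable Jacobi field $Y$ along $\gamma_{v(q)}$ that is $g_T$-orthogonal to $\wp$, i.e. $Y\in J^{ss}(v(q))$, so $T_qN(w)\subseteq D_v^s(q)$; as both spaces have dimension $\dim J^{ss}(v)$, they coincide. Fact (iii) is then formal: $\pi$ restricts to a bijection $W(v)\to H(v)$, so $\pi(U)$ is a neighborhood of $\pi(v)$ in $H(v)$, and for $q\in\pi(U)$ the vector $v(q)$ lies in $U$, whence $q\in N(v(q))$ with $T_qN(v(q))=D_v^s(q)$.

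The one point where the Berwald setting genuinely differs from \cite{BBE} is the reference vector: the decay in Lemma \ref{decreasing} is measured in $\|\cdot\|_{T(t)}$, which varies with $t$. This is absorbed by Proposition \ref{uniform}: with the uniform constant $C_0$ one gets $\|Y(t)\|\le C_0^2 C e^{-\lambda t}\|Y(0)\|$ in a fixed reference frame, and, after one more application of $C_0$, an honest exponential contraction of the Finsler distance $d(g^tu,g^tw)$ for $u$ in the disc through $w$; once uniform contraction in $d$ is in hand the contraction-mapping/graph-transform argument producing $W^{ss}(w)$ goes through as in \cite{BBE}. Likewise the $g_T$-orthogonality in the definition of $J^{ss}$ replaces the Euclidean orthogonality of \cite{BBE} throughout, exactly as was already done for the earlier lemmas of this subsection.

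The main obstacle is fact (ii): proving that $N(w)$ is actually $C^1$ and that its tangent distribution is precisely $D_v^s$, rather than some larger piece of the stable set. This is where uniform recurrence of $v$ is indispensable -- it is what promotes the mere convexity of Jacobi field lengths to the uniform exponential decay of Lemma \ref{decreasing}, and without uniform contraction the limit defining $W^{ss}(w)$ need not even be differentiable. Everything else -- the inclusion $N(w)\subseteq H(v)$, the dimension count, and the covering statement -- is bookkeeping, and the Berwald-specific bookkeeping needs no idea beyond Propositions \ref{uniform} and \ref{Foulon} together with Lemma \ref{decreasing}.
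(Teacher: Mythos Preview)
Your route is the reverse of the paper's. In the paper (and in \cite{BBE}) one does \emph{not} first build $W^{ss}(w)$ dynamically and then read off the tangent distribution; instead one works directly with the $C^1$ distribution $D_v^s$ on $H(v)$ and verifies integrability by the following single fact: if $\alpha:[0,1]\to H(v)$ is a piecewise $C^1$ curve tangent to $D_v^s$ with $\alpha(0)=p$, $\alpha(1)=q$, then $d(\gamma_{v(p)}(t),\gamma_{v(q)}(t))\to 0$. The Berwald modification is exactly the one you identified in your last paragraph: for the geodesic variation $\gamma_{v(\alpha(s))}(t)$ with variation field $J_s(t)\in J^{ss}(v(\alpha(s)))$ one has
\[
d(\gamma_{v(p)}(t),\gamma_{v(q)}(t))\ \le\ \int_0^1 F(J_s(t))\,ds\ \le\ C_0\int_0^1 \|J_s(t)\|_T\,ds\ \le\ C_0C e^{-\lambda t}\sup_s\|J_s(0)\|_T,
\]
using Proposition~\ref{uniform} and Lemma~\ref{decreasing}. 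Everything else in the proof of Lemma~3.6 of \cite{BBE} carries over verbatim. The dynamical description $W^{ss}(v)=\{w\in W(v):d(g^tv,g^tw)\to 0\}$ which you take as input is, in the paper and in \cite{BBE}, the \emph{subsequent} Proposition~\ref{exponential}, proved after $H^{ss}(v)$ is already in hand as the integral leaf.

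Because of this reversal your step (ii) has a genuine gap. You write that you will ``run the stable-manifold construction of \cite{BBE}'' and that ``the contraction-mapping/graph-transform argument producing $W^{ss}(w)$ goes through as in \cite{BBE}'', but \cite{BBE} contains no such construction at this point; their $W^{ss}$ is defined via the integral leaf, not via a graph transform. A direct Hadamard--Perron argument could in principle be made to work, but the situation is partially hyperbolic (the parallel Jacobi fields give a nontrivial center), and Lemma~\ref{decreasing} only contracts the $J^{ss}$ piece, not the whole horosphere. So the limit of $g^{-T}$-images of horospherical balls around $g^Tw$ does not converge to an $(n-k)$-dimensional disc; you would have to set up a graph transform transverse to the center foliation and control its regularity, none of which you address. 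The paper's approach avoids this entirely by proving the curve-tangent estimate above and invoking the integrability argument of \cite{BBE} directly.
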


\begin{proof}
See Lemma 3.6 in \cite{BBE}. We only need to modify a little bit when we prove that if $\alpha(s)$ is a piecewise $C^1$ curve tangent to $D_v^s$ with $\alpha(0)=p$, $\alpha(1)=q$, then $d(\gamma_{v(p)}(t), \gamma_{v(q)}(t))\rightarrow 0$ as $t\rightarrow \infty$. This follows from the coarse estimation of the distance function and Lemma \ref{decreasing}: there is a geodesic variation $\gamma_{v(\alpha(s))}(t)$ with Jacobi fields $J_s(t)$
\begin{equation*}
\begin{aligned}
d(\gamma_{v(p)}(t), \gamma_{v(q)}(t))&\leq \int_0^1 F(J_s(t))ds\leq \int_0^1 C_0\|J_s(t)\|_T ds
\\ &\leq C_0C e^{-\lambda t}\|J_s(0)\|_T \to 0 \ \  \ \text{\ as\ } t\to \infty,
\end{aligned}
\end{equation*}
where $C_0$ is the constant in Proposition \ref{uniform}.
\end{proof}

\begin{proposition}(Proposition 3.7 in \cite{BBE})
For every $v \in \Re$, $D_v^s$ is integrable on $\pi(W(v)\cap \Re)\subseteq H(v)$.
\end{proposition}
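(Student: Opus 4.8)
The plan is to upgrade the local integrability statement (integrability of $D_v^s$ near $\pi(v)$ in $H(v)$, established for uniformly recurrent regular $v$) to a global statement on all of $\pi(W(v)\cap\Re)$ for every regular $v$, following the strategy of Proposition 3.7 in \cite{BBE}. The key point is that integrability is a local and invariant property: if we can cover $\pi(W(v)\cap\Re)$ by open sets on each of which $D_v^s$ is integrable, then by uniqueness of integral manifolds the local integral manifolds patch together into a global foliation. So the task reduces to producing, around an arbitrary point $q=\pi(w)$ with $w\in W(v)\cap\Re$, a neighborhood on which $D_v^s$ is integrable.

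First I would exploit the dynamics. Since $\tilde M$ admits a finite-volume quotient $M$, the set of uniformly recurrent vectors is dense (indeed a dense $G_\delta$) in $T^1M$, and lifting, every vector is arbitrarily well approximated by iterates of uniformly recurrent regular vectors under $d\phi\circ g^t$, $\phi\in\Gamma$. Given $w\in W(v)\cap\Re$, I would find a uniformly recurrent regular vector $v'$ and isometry-times-flow maps carrying a neighborhood of $v'$ in $W(v')$ onto a neighborhood of $w$ in $W(v)$; here one uses that isometries of $\tilde M$ (which for a Berwald space preserve $F$, hence the geodesic flow, horospheres, parallel Jacobi fields, and the splitting $J^{ss}$) conjugate $D_{v'}^s$ to $D_v^s$, and that $g^t$ maps $W(v')$ into $W(g^tv')$ preserving the strong stable distribution. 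By the previous Lemma, $D_{v'}^s$ is integrable near $\pi(v')$; transporting by the isometry and the flow shows $D_v^s$ is integrable near $q$. Doing this for every $q$ covers $\pi(W(v)\cap\Re)$.

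Then I would invoke the gluing: on overlaps, two local integral manifolds of the $C^1$ distribution $D_v^s$ through a common point coincide by uniqueness (which in turn rests, as in the Lemma above, on the coarse distance estimate from Proposition \ref{uniform} together with the exponential decay of $J^{ss}$ from Lemma \ref{decreasing}, showing that curves tangent to $D_v^s$ give geodesics that are forward-asymptotic, hence lie on a single horosphere and in a single leaf). Patching the leaves yields an integral foliation of $D_v^s$ on all of $\pi(W(v)\cap\Re)$, which is the assertion.

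The main obstacle I expect is the transport step: one must check carefully that the local integrability near $\pi(v')$ is genuinely transported to an \emph{open} neighborhood of an arbitrary $q$, i.e. that the approximating uniformly recurrent vectors can be chosen so that the image neighborhoods actually cover $W(v)\cap\Re$ and not merely accumulate on it, and that regularity ($\Re_m$ being open) is preserved along the way. This is exactly the content of the Berwald analogue of Proposition 3.7 in \cite{BBE}; all the ingredients it needs — finiteness of the Liouville measure, density of uniformly recurrent vectors, isometry-invariance of the Berwald structure and of $D_v^s$, the coarse estimate \ref{uniform}, and Lemma \ref{decreasing} — are already available, so the proof goes through as in \cite{BBE} with $\langle\cdot,\cdot\rangle$ replaced by $g_T(\cdot,\cdot)$ and distance estimates mediated by the uniform constant $C_0$.
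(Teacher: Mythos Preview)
There is a genuine gap in the transport step, and it is not the one you flag. An isometry $\phi$ sends $W(v')$ to $W(d\phi\,v')$, and the flow sends $W(v')$ to $W(g^tv')$; for the composite to land in $W(v)$ you need $d\phi\circ g^t(v')\in W(v)$. Since uniform recurrence is preserved by both $\Gamma$ and $g^t$, your scheme therefore requires uniformly recurrent regular vectors to be dense in $W(v)\cap\Re$ itself. But $W(v)$ is a single leaf of the weak-stable lamination, a Liouville-null set, and there is no reason a full-measure (or residual) subset of $T^1M$ should intersect every such leaf densely; uniform recurrence is not invariant under asymptoticity. So the covering you propose need not exist, and your last paragraph resolves the difficulty only by appealing to BBE~3.7, which is the statement you are trying to prove.

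The paper's route is different and bypasses recurrence altogether. Following BBE~3.7, one works directly near an arbitrary $q\in\pi(W(v)\cap\Re)$ and constructs \emph{approximating vector fields}: the radial field $q'\mapsto v(q')$ is $C^1$ (because the Busemann function is $C^2$), and $D_v^s$ is recovered as the $g_u$-orthogonal projection away from the parallel directions. One builds a sequence of manifestly integrable $C^1$ distributions converging in $C^1$ to $D_v^s$, whence $D_v^s$ is integrable. The only Berwald-specific adjustments are that the radial field's regularity comes from the $C^2$ Busemann function established earlier, and that ``orthogonal'' means with respect to the reference-dependent inner product $g_u$ rather than a fixed Riemannian one. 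That is the argument to adapt; your Lemma~\ref{decreasing} and the earlier local Lemma are not the inputs to this step.
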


\begin{proof}
See Proposition 3.7 in \cite{BBE}. When we construct the approximating vector fields, note that we use the fact that radial vector fields $X(q)$ is $C^1$ in $q$; and orthogonal projection is with respect to the norm $g_{u}$.
\end{proof}

Denote $H^{ss}(v)$ the integral manifold of $D_v^s$ near $\pi(v)$ on $H(v)$. Then $H^{ss}(v)$ is an $(n-k)$-dimensional submanifold which intersects $F(v)$ transversally and orthogonally with respect to $g_v$, where $k=\text{rank\ }(M)$. The strong stable manifold of $v$ is defined as
$$W^{ss}(v)=\{w\in W(v)| \pi(w)\in H^{ss}(v)\}.$$

The following proposition says $W^{ss}(v)$ is indeed a strong stable manifold in dynamics sense.
\begin{proposition}\label{exponential}(Proposition 3.10 in \cite{BBE})
Let $v \in \Re$ be uniformly recurrent. Then
$$W^{ss}(v)=\{w\in W(v)|d(g^tv, g^t w)\rightarrow 0\ \text{as}\ t\rightarrow \infty\},$$
and if $w\in W^{ss}(v)$, then
$$d(g^tv,g^tw)\leq Cd_s(v,w)e^{-\lambda t}$$
for all $t\geq 0$, where $\lambda =\lambda(v) >0$, $C=C(w)$ is positive and continuous on $W^{ss}(v)$, $d_s$ is the induced distance in $W^{ss}(v)$.
Moreover, $W^{ss}(v)$ is a closed connected submanifold of $T^1\widetilde{M}$ which is diffeomorphic to $\mathbb{R}^{n-k}$.
\end{proposition}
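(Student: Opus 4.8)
The plan is to follow the scheme of Proposition 3.8 in \cite{BBE}, adapting the three assertions --- the dynamical characterization of $W^{ss}(v)$, the exponential contraction estimate, and the topological identification with $\mathbb{R}^{n-k}$ --- to the Berwald setting, where the only real changes are the systematic replacement of the Euclidean inner product by the fundamental tensor $g_T$ and the insertion of the uniform constant $C_0$ of Proposition \ref{uniform} whenever one passes between the intrinsic norm $\|\cdot\|_T$ and the Finsler distance $d$. First I would prove the two inclusions in the characterization. If $w\in W^{ss}(v)$, then $\pi(w)$ lies on $H^{ss}(v)$, so it is joined to $\pi(v)$ by a piecewise $C^1$ curve $\alpha(s)$ tangent to $D_v^s$; exactly the computation in the integrability lemma above --- using the geodesic variation $\gamma_{v(\alpha(s))}(t)$, the bound $F(J_s(t))\le C_0\|J_s(t)\|_T$, and Lemma \ref{decreasing} --- gives $d(g^tv,g^tw)\le C_0 C e^{-\lambda t}\int_0^1\|Y_s(0)\|_T\,ds$, which both proves the forward inclusion and, after estimating $\int_0^1\|Y_s(0)\|_T\,ds$ by a multiple of the induced length $d_s(v,w)$, yields the quantitative bound $d(g^tv,g^tw)\le C d_s(v,w)e^{-\lambda t}$ with $C=C(w)$ positive and continuous. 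For the reverse inclusion, suppose $w\in W(v)\cap\Re$ with $d(g^tv,g^tw)\to 0$; one shows as in \cite{BBE} that the Jacobi field $Y$ along $\gamma_w$ with $Y(0)=v(\pi(w))$ expressed in the splitting $J^{ss}(w)\oplus J^p(w)$ (orthogonal with respect to $g_{w}$) can have no nonzero parallel component, since a nonzero parallel component would force $\|Y(t)\|_T$ to stay bounded below, hence $d(g^tv,g^tw)$ bounded below by convexity and Proposition \ref{uniform}, contradicting convergence to $0$; therefore $Y\in J^{ss}(w)$ and $w\in W^{ss}(v)$.

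For the topological statement, I would argue that $W^{ss}(v)$ is closed in $T^1\tilde{M}$: it is the set of $w\in W(v)$ with $\pi(w)\in H^{ss}(v)$, and $H^{ss}(v)$, being the maximal integral manifold of the $C^1$ distribution $D_v^s$ through $\pi(v)$ consisting of those points $q$ with $d(\gamma_{v(q)}(t),\gamma_v(t))\to 0$, is closed in $H(v)$ by the same coarse-distance argument (a limit of such $q$ is again such a $q$); and $W(v)\to H(v)$, $w\mapsto\pi(w)$, is a homeomorphism since $v(q)$ is the unique vector at $q$ asymptotic to $\gamma_v(\infty)$. Connectedness and the diffeomorphism type follow from the fact that $H^{ss}(v)$ is an $(n-k)$-dimensional submanifold of the simply connected horosphere $H(v)\cong\mathbb{R}^{n-1}$ which is complete in the induced metric (completeness coming from the distance estimate, exactly as in \cite{BBE}), totally geodesic in $H(v)$ in the relevant sense, so it is itself diffeomorphic to $\mathbb{R}^{n-k}$ by Cartan--Hadamard applied on $H(v)$; lifting back through the homeomorphism $W(v)\cong H(v)$ gives $W^{ss}(v)\cong\mathbb{R}^{n-k}$.

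I expect the main obstacle to be the reverse inclusion together with the uniform control hidden in it, namely ruling out a parallel component of $Y$ when only $d(g^tv,g^tw)\to 0$ is assumed. In the Riemannian case one uses that the length functional and the distance agree to first order and that $\|\cdot\|_T$ is independent of $T$; here the reference-vector dependence of $g_T$ means that even a parallel Jacobi field, whose $g_T$-norm is constant, controls the distance only up to the factor $C_0$, and conversely a small distance controls $\|Y(t)\|_T$ only up to $C_0$. The point to get right is that this two-sided comparison by a \emph{single uniform constant} $C_0$ (Proposition \ref{uniform}, valid precisely because parallel translation is a Riemannian isometry of the $\hat g$-metrics on tangent spaces by Corollary \ref{isometries}) is exactly enough: a nonzero parallel component gives $\liminf_{t\to\infty}\|Y(t)\|_T>0$, hence by convexity of $t\mapsto d(g^tv,g^tw)$ (Proposition \ref{distancefunction}) and the coarse estimate $\frac{1}{C_0}\|Y(t)\|_T\le$ (first variation of $d$) a strictly positive lower bound for $d(g^tv,g^tw)$, contradicting the hypothesis. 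Once this qualitative dichotomy is established, the remaining estimates are the verbatim \cite{BBE} arguments with $\langle\cdot,\cdot\rangle\rightsquigarrow g_T(\cdot,\cdot)$ and an inserted $C_0$, and the smoothness inputs ($H(v)$ is $C^2$, $D_v^s$ is $C^1$, $X(q)$ is $C^1$ in $q$) have already been recorded above.
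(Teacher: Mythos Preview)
The paper gives no proof for this proposition; consistent with its stated policy it implicitly defers to Proposition~3.8 of \cite{BBE}, the needed Berwald modifications ($\langle\cdot,\cdot\rangle\rightsquigarrow g_T$, insertion of the constant $C_0$ from Proposition~\ref{uniform}) having already been displayed in the two preceding integrability lemmas. Your plan is exactly this, and your forward inclusion and exponential estimate reproduce the paper's earlier computation.

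Two steps in your outline do not hold up as written. In the reverse inclusion, ``the Jacobi field $Y$ along $\gamma_w$ with $Y(0)=v(\pi(w))$'' is meaningless: for $w\in W(v)$ one has $v(\pi(w))=w$, so this $Y$ is just the tangent field $\gamma_w'$ and carries no information about the splitting $J^{ss}\oplus J^p$. The actual mechanism is the local product structure of $H(v)$ near $\pi(v)$ as $H^{ss}$-leaves times $F(v)\cap H(v)$; the flat factor contributes a \emph{constant} distance to $v$ under $g^t$, so $d(g^tv,g^tw)\to 0$ forces that factor to be trivial and hence $\pi(w)\in H^{ss}(v)$. More seriously, your argument for $W^{ss}(v)\cong\mathbb{R}^{n-k}$ via ``Cartan--Hadamard applied on $H(v)$'' fails on both hypotheses: horospheres in nonpositively curved manifolds need not have nonpositive intrinsic curvature (the Gauss equation adds a nonnegative term from the second fundamental form of $H(v)$, which is positive semidefinite because Busemann functions are convex), and there is no reason for the integral leaf $H^{ss}(v)$ of the distribution $D_v^s$ to be totally geodesic in $H(v)$. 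The \cite{BBE} argument for the diffeomorphism type does not pass through intrinsic curvature of the horosphere; you should follow it directly rather than substitute this shortcut.
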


\begin{definition}
For any vector $v\in \Re$, the strong unstable manifold of $v$ is defined as $W^{uu}(v)=-W^{ss}(-v)$.
\end{definition}

\subsection{Weyl chambers}

Before defining Weyl chambers, let us discuss the following two lemmas, which are important tools to show all the Weyl chambers are isometric under the angle metric.

\begin{lemma} \label{convexitylemma}(Convexity Lemma, Lemma 1.5 in \cite{BBE})
Consider $v, w\in T^1_p\widetilde{M}$, $v',w' \in T^1_{p'}\widetilde{M}$. Suppose $v$ is asymptotic to $v'$, $w$ is asymptotic to $w'$, and both $\triangle(v,w)$ and $\triangle(v', w')$ are flat triangles. Then
\begin{enumerate}
\item $\angle(v,w)=\angle(v',w')=\theta >0$.
\item For any number $\alpha$ with $0\leq \alpha \leq \theta$, let $v_{\alpha}$, $v'_{\alpha}$ denote vectors at $p$ and $p'$ tangent to $\triangle(v,w)$, $\triangle(v',w')$ such that $\angle(v,v_{\alpha})=\angle(v',v'_{\alpha})=\alpha$. Then $v_{\alpha}$ and $v'_{\alpha}$ are asymptotic.
\end{enumerate}
\end{lemma}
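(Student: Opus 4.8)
The plan is to reduce everything to the convexity of the distance function (Proposition \ref{distancefunction}) together with the Flat Strip Lemma \ref{flatstripBerwald}, exactly as in the Riemannian argument of \cite{BBE}, but being careful that the only geometric inputs we use are the ones already established for Berwald spaces (convexity, the angle notion of Definition \ref{angledefinition}, and Proposition \ref{angle} identifying the angle with the Minkowski-space angle at a point). First I would prove part (1). Since $v$ is asymptotic to $v'$ and $w$ to $w'$, the four geodesics $\gamma_v,\gamma_{v'}$ and $\gamma_w,\gamma_{w'}$ stay bounded distance apart; I claim the flat triangle $\triangle(v,w)$ bounds, together with $\triangle(v',w')$, a configuration in which $\gamma_v$ is parallel to $\gamma_{v'}$ and $\gamma_w$ parallel to $\gamma_{w'}$. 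The point is that inside a flat triangle (a flat totally geodesic $2$-dimensional piece) the Busemann/convexity function $t\mapsto d(\gamma_v(t),\gamma_{v'}(t))$ is convex and bounded, hence nonincreasing; running the same argument on the reversed geodesics (using reversibility) shows it is also nondecreasing, so it is constant, i.e. $\gamma_v$ and $\gamma_{v'}$ are parallel, and likewise $\gamma_w\parallel\gamma_{w'}$. By the Flat Strip Lemma each parallel pair bounds a flat strip; gluing the two flat strips to the two flat triangles along common geodesics produces a single flat totally geodesic region containing $p$, $p'$, and all four rays. Within a flat Minkowski region the angle at $p$ between $v$ and $w$ equals the angle at $p'$ between the parallel translates, which are exactly $v'$ and $w'$ (parallel translation in the flat region identifies $v\leftrightarrow v'$, $w\leftrightarrow w'$); this gives $\angle(v,w)=\angle(v',w')=:\theta$, and $\theta>0$ because $v\neq \pm w$ (otherwise $\triangle(v,w)$ would be degenerate).

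For part (2), once we are inside the single flat Minkowski region $\mathcal{F}$ built above, the vectors $v_\alpha$ and $v'_\alpha$ are by definition the tangent directions in $\mathcal{F}$ at $p$ and $p'$ with $\angle(v,v_\alpha)=\angle(v',v'_\alpha)=\alpha$. In the flat region the angle metric on directions is the Minkowski angle (Proposition \ref{angle}), which is strictly monotone along the "arc" of directions from $v$ to $w$ inside the triangle; hence $v_\alpha$ and $v'_\alpha$ are the images of one another under the parallel translation in $\mathcal{F}$ that takes $v\mapsto v'$, $w\mapsto w'$. Parallel translation inside a flat strip carries a geodesic ray to a parallel geodesic ray, so $\gamma_{v_\alpha}$ and $\gamma_{v'_\alpha}$ remain at bounded distance, i.e. $v_\alpha$ is asymptotic to $v'_\alpha$. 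I would spell this out by exhibiting the explicit affine parametrization of $\mathcal{F}$ coming from $\exp_p$ restricted to the flat $2$-plane it spans, under which the four rays are literally straight rays and the asymptoty is immediate.

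The main obstacle I expect is the gluing step: showing that the two flat triangles and the two flat strips actually fit together into a \emph{single} flat totally geodesic region, rather than just a piecewise-flat one with possible angle defects along the gluing geodesics. In the Riemannian case this is handled by the flat strip lemma plus the fact that a union of flats sharing a common geodesic, all making the distance function affine, is again flat; here I would argue the same way using that on the glued region the second variation formula forces the flag curvature to vanish (as in the proof of Lemma \ref{flatstripBerwald}, where $\frac{d^2}{dt^2}F(J_s)(t)=0$ along the constructed variation) and that total geodesy propagates across the common boundary geodesic by the convexity estimate used there. A secondary technical point is that the angle is a Minkowski angle, not a Euclidean one, so "additivity of angles along the arc from $v$ to $w$" must be justified from the fact that inside $\mathcal{F}$ all geodesics are lines (so the configuration really is planar Minkowski geometry); this is exactly what Proposition \ref{angle} provides, and the monotonicity of $\alpha\mapsto v_\alpha$ follows since the distance between two rays from a common point in a Minkowski plane is a strictly increasing function of the Minkowski angle between them. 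With these in place the rest is the same bookkeeping as in Lemma 4.1 of \cite{BBE}.
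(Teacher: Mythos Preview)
Your approach has a genuine gap at the very first step. You claim that since $v$ is asymptotic to $v'$, the function $t\mapsto d(\gamma_v(t),\gamma_{v'}(t))$ is convex and bounded, hence nonincreasing, and then that ``running the same argument on the reversed geodesics (using reversibility) shows it is also nondecreasing, so it is constant.'' This is false: reversibility of the Berwald metric only tells you that $t\mapsto\gamma_v(-t)$ is again a geodesic, not that $-v$ is asymptotic to $-v'$. The hypothesis gives boundedness of $d(\gamma_v(t),\gamma_{v'}(t))$ only for $t\ge 0$; there is no control as $t\to-\infty$, and in general asymptotic geodesics diverge backwards. So $\gamma_v$ and $\gamma_{v'}$ are \emph{not} parallel, the Flat Strip Lemma does not apply, and the entire ``glue the two strips and two triangles into one flat region'' picture collapses. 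The obstacle you flagged (making the glued region genuinely flat) never even arises, because the strips do not exist.

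The paper's argument avoids parallelism entirely. The key observation is that inside each flat triangle the angle notion of Definition~\ref{angledefinition} satisfies, by Proposition~\ref{angle}, the exact linear relation
\[
d(\gamma_v(t),\gamma_w(t))=t\,\theta,\qquad d(\gamma_{v'}(t),\gamma_{w'}(t))=t\,\theta',
\]
for all $t\ge 0$. Combining this with the triangle inequality and the asymptotic hypotheses gives $|t\theta-t\theta'|\le d(\gamma_v(t),\gamma_{v'}(t))+d(\gamma_w(t),\gamma_{w'}(t))\le C$ for all $t\ge 0$, forcing $\theta=\theta'$. Part (2) is then handled by the same convexity/linearity bookkeeping as in Lemma~1.5 of \cite{BBE}, again without ever needing $\gamma_v\parallel\gamma_{v'}$. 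You should rewrite your argument along these lines: work only with forward rays, exploit the linear distance growth inside each flat triangle, and use the global convexity of $d$ (Proposition~\ref{distancefunction}) rather than the Flat Strip Lemma.
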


\begin{proof}
Recall the angle notion in Definition \ref{angledefinition}. In flat triangles, by Proposition \ref{angle}, the following relations are also true for Berwald case:
$$d(\gamma_v(t), \gamma_{w}(t))=t\theta\ \ \text{and}\ \ d(\gamma_{v'}(t), \gamma_{w'}(t))=t\theta'.$$
then the lemma follows from the same argument in Lemma 1.5 in \cite{BBE}.
\end{proof}

Denote $\lambda(v)= -[\max_{Y\in J^s(v)}\limsup_{t\rightarrow +\infty}\frac{\ln\|Y(t)\|_T}{t}]$. By Proposition \ref{exponential}, $\lambda(v)>0$ if $v$ is uniformly recurrent. Suppose $-b^2 \leq K \leq 0$.

\begin{lemma}\label{anglelemma}(Angle Lemma, Lemma 4.4 in \cite{BBE})
Let $v\in \Re$ be uniformly recurrent. Choose $\alpha \in[0, \frac{\pi}{4}]$ such that $0< \alpha< \frac{\lambda(v)}{b}$ and let $0< \beta<\lambda(v)-\alpha b$ be chosen arbitrarily. Then for any $w \in T_{\pi(v)}F(v)$ with $\angle(v, w) \leq \alpha$:
\begin{enumerate}
\item For every $q\in H^{ss}(v)$, $w(q)$ is tangent to $F(v(q))$. Moreover,
$$\angle(v,w)\\ =\angle(v(q),w(q)).$$
\item If $q\in H^{ss}(v)$, then
  $$d(g^tw, g^tw(q))\leq Ce^{-\beta t}d(w,w(q)),$$
where $C=C(q)$ is bounded on compact subsets of $H^{ss}(v)$.
\item $w \in \Re$.
\end{enumerate}
\end{lemma}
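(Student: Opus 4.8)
The plan is to mirror the proof of the Angle Lemma (Lemma 3.9) in \cite{BBE}, replacing the Euclidean inner product by the reference-dependent norm $g_T$ throughout and inserting the uniform constant $C_0$ from Proposition \ref{uniform} whenever we need to pass between the genuine Finsler distance $d$ and quantities measured in a fixed $g_T$. Concretely, I would first record the basic geometry: since $w \in T_{\pi(v)}F(v)$ and $F(v)$ is a $k$-flat (by the results of subsection 4.2), both $\gamma_v$ and $\gamma_w$ lie in the flat $F(v)$, the triangle $\triangle(v,w)$ is flat, and $\angle(v,w) \le \alpha$ by hypothesis. For part (1), fix $q \in H^{ss}(v)$. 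Since $q$ lies on the strong stable horosphere, the geodesic $\gamma_{v(q)}$ is forward asymptotic to $\gamma_v$, and by Proposition \ref{exponential} we even have $d(g^t v, g^t v(q)) \to 0$. I would apply the Convexity Lemma \ref{convexitylemma}: I need to know that $\triangle(v(q), w(q))$ is a flat triangle, where $w(q)$ is defined as the vector at $q$ asymptotic to the appropriate point $\gamma_w(\infty)$ of $\tilde M(\infty)$; granting that, Lemma \ref{convexitylemma} immediately gives $\angle(v,w) = \angle(v(q), w(q))$ and that $w(q)$ is tangent to a flat containing $\gamma_{v(q)}$, i.e. to $F(v(q))$.

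The flatness of $\triangle(v(q), w(q))$ is the crux of part (1), and it is where I expect the main obstacle to lie. The argument in \cite{BBE} runs as follows and should transfer: one estimates the Busemann-type difference $d(\gamma_{w}(t), \gamma_{v}(t)) - d(\gamma_{w(q)}(t), \gamma_{v(q)}(t))$ using that $d(\gamma_v(t),\gamma_{v(q)}(t))$ decays like $e^{-\lambda(v)t}$ while the transverse spreading of $\gamma_{w(q)}$ off $\gamma_{v(q)}$ is controlled, via Rauch comparison (the Proposition in subsection 4.1) and the curvature bound $-b^2 \le K \le 0$, by $e^{\alpha b t}$ up to the choice $\angle(v,w) \le \alpha$; since $\alpha b < \lambda(v)$, the net effect is that $d(\gamma_w(t),\gamma_{v}(t))$ and $d(\gamma_{w(q)}(t),\gamma_{v(q)}(t))$ agree asymptotically, forcing $\angle(v(q),w(q)) = \angle(v,w)$ with equality. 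Then the equality case of Lemma \ref{angleestimate} (the remark following it: equality forces a flat totally geodesic region) upgrades this to: $\gamma_{v(q)}$ and $\gamma_{w(q)}$ bound a flat, hence $\triangle(v(q),w(q))$ is flat. Throughout this estimate I must be careful that lengths of Jacobi fields are measured with the correct reference vector $T$, but whenever I convert such a length to an actual distance I lose only the fixed factor $C_0$, which is harmless since the exponents $\lambda(v)$, $\alpha b$, $\beta$ are strictly separated.

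For part (2), I would take the geodesic variation $s \mapsto w(\alpha(s))$ along a curve $\alpha(s)$ in $H^{ss}(v)$ joining $\pi(w) = \pi(v)$ to $q$, with variation Jacobi fields $J_s(t)$ along $\gamma_{w(\alpha(s))}$. Decompose $J_s$ into its component along the flat $F$ (which by part (1) stays tangent to the flats $F(v(\alpha(s)))$ and is parallel there, so contributes nothing decaying) and a transverse strong-stable component; the transverse part decays, by the Rauch estimate together with the exponential decay rate $\lambda(v)$ of strong stable Jacobi fields and the curvature bound, at rate at least $\lambda(v) - \alpha b > \beta$. Integrating,
\begin{equation*}
d(g^t w, g^t w(q)) \le \int_0^1 F(J_s(t))\, ds \le C_0 \int_0^1 \|J_s(t)\|_T\, ds \le C(q) e^{-\beta t} d(w, w(q)),
\end{equation*}
with $C(q)$ bounded on compacta since the comparison constants and the length of $\alpha$ vary continuously. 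This is the Berwald analogue of the corresponding step in \cite{BBE}, the only new ingredient being the insertion of $C_0$. Finally, part (3) is a soft consequence: by part (2) the forward orbit of $w$ is uniformly recurrent in the same sense that $v$ is (it shadows $v$'s recurrence up to exponentially small error), and rank is upper semicontinuous and constant on orbits asymptotic within a flat; since $w$ is tangent to the $k$-flat $F(v)$, $\operatorname{rank}(w) = k = \operatorname{rank}(\tilde M)$, so $w$ is regular, i.e. $w \in \Re$. I expect the flatness of $\triangle(v(q),w(q))$ in part (1) to be the step requiring the most care, precisely because the reference-vector bookkeeping and the exponent separation $\alpha b < \lambda(v)$ must be tracked simultaneously; the rest follows the template of \cite{BBE} with the substitutions $\langle\cdot,\cdot\rangle \rightsquigarrow g_T(\cdot,\cdot)$ and the $C_0$-corrected triangle inequalities.
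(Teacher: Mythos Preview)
Your proposal is correct and follows essentially the same approach as the paper: mirror the Riemannian Angle Lemma from \cite{BBE} (the paper cites it as Lemma~4.4 there, not 3.9---you may want to double-check the numbering), replacing the inner product by $g_T$ and inserting the uniform comparison constant $C_0$ from Proposition~\ref{uniform} wherever one passes between $F(J)$ and $\|J\|_T$. The paper's proof is in fact much terser than yours---it simply records the two coarse estimates needed, namely $\frac{1}{C_0}F(J)\le \|J\|_T \le C_0 F(J)$ and, additionally, $\frac{1}{C_1}\angle(v,w)\le d^*(v,w)\le C_1\angle(v,w)$ for the Sasaki distance $d^*$ on $T^1\tilde M$, and then defers entirely to \cite{BBE}; you have effectively unpacked that deferral.
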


\begin{proof}
The proof is same as Lemma 4.4 in \cite{BBE}. But the following coarse estimation should be used:
\begin{enumerate}
\item If $v, w \in T^1_p\widetilde{M}$, $\frac{1}{C_1} \angle(v,w)\leq d^{*}(v,w)\leq C_1 \angle(v,w)$.
\item $\frac{1}{C_0}F(J)\leq \|J\|_T \leq C_0F(J)$.
\end{enumerate}
where $C_0$ is the constant in Proposition \ref{uniform}. The notations are same as in Lemma 4.4 in \cite{BBE}, for example $w(q)$ denotes the unique vector asymptotic to $w$ and with foot point $q$; and $d^*$ is induced by the Sasaki metric on $T^1\widetilde{M}$.
\end{proof}

Now we define Weyl chambers for $p$-regular vectors. A vector $v\in T^1\widetilde{M}$ is called $p$-regular if there is a point $q\in F(v)$ such that $v(q)$ is regular. If $v$ is $p$-regular, we set
$$A(v)=\{q\in \widetilde{M}|v(q)\ \text{is}\ p\text{-regular}\}.$$
It can be proved that if $v\in \Re$ is uniformly recurrent, then $A(v)=\widetilde{M}$.

\begin{definition}
For a $p$-regular vector $v$, we can define two types of Weyl chambers:
\begin{enumerate}
\item $\mathcal{C}(v)$ is the set of all $p$-regular vectors $w\in S_{\pi(v)}F(v)$ such that $A(w)=A(v)$ and $F(w(q))=F(v(q))$ for all $q\in A(v)$.
\item $\tilde{\mathcal{C}}(v):=\{w\in S_{\pi(v)}F(v)| w(q)\in S_qF(v(q))\ \text{for all}\ q \in A(v)\}$.
\end{enumerate}

\end{definition}

Clearly $\mathcal{C}(v)\subseteq \tilde{\mathcal{C}}(v)$. Since isometries of $\widetilde{M}$ and parallel translations of $F(v)$ preserve angles, they induce isometries (with respect to angle distance) between two Weyl Chambers. Moreover, by Convexity Lemma 4.3.22, there are isometries  $\mathcal{C}(v)\rightarrow \mathcal{C}(v(q))$ and $\tilde{\mathcal{C}}(v)\rightarrow \tilde{\mathcal{C}}(v(q))$ both defined by $w\rightarrow w(q)$. As a priori, $\mathcal{C}(v)$ is not necessarily open, but we can prove that $\tilde{\mathcal{C}}(v)$ is closed and convex. See Lemma 2.5 in \cite{BBS}. Furthermore, by Angle Lemma \ref{anglelemma} we have:

\begin{lemma}(Lemma 2.7 in \cite{BBS})
If $v$ is uniformly recurrent, then
\begin{enumerate}
\item $\mathcal{C}(v)$ contains an open neighborhood of $v$ in $S_{\pi(v)}F(v)$;
\item $\tilde{\mathcal{C}}(v)$ is the closure of $\mathcal{C}(v)$;
\item $\mathcal{C}(v)$ is convex.
\end{enumerate}
\end{lemma}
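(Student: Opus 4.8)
The plan is to follow the scheme of Lemma 4.5 in \cite{BBE} (and Lemma 2.5 in \cite{BBS}), using the Angle Lemma \ref{anglelemma} together with the convexity of the distance function on $\tilde{M}$ and the coarse estimates from Proposition \ref{uniform}. Throughout, fix a uniformly recurrent $v\in\Re$, so that $\lambda(v)>0$ by Proposition \ref{exponential}, and recall that for such $v$ we have $A(v)=\tilde{M}$.

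\emph{Proof of (1).} I would choose $\alpha\in(0,\tfrac{\pi}{4}]$ with $0<\alpha<\lambda(v)/b$ and apply the Angle Lemma to every $w\in T_{\pi(v)}F(v)$ with $\angle(v,w)\le\alpha$. By part (3) of Lemma \ref{anglelemma}, all such $w$ lie in $\Re$; by part (1), for each $q\in H^{ss}(v)$ the vector $w(q)$ is tangent to $F(v(q))$ and $\angle(v(q),w(q))=\angle(v,w)$. It remains to upgrade this from $q\in H^{ss}(v)$ to all $q\in\tilde{M}=A(v)$: since $F(v)$ is a $k$-flat and $H^{ss}(v)$ meets it transversally, one reaches an arbitrary point of $\tilde{M}$ by first flowing along $F(v)$ (where the statement is immediate because parallel translation inside a flat preserves the flat structure and angles) and then moving along a strong stable manifold, using part (2) of Lemma \ref{anglelemma} to see that $w(q)$ depends continuously and is transported consistently. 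This shows the small angular ball $\{w\in S_{\pi(v)}F(v):\angle(v,w)\le\alpha\}$ is contained in $\mathcal{C}(v)$, which by the Remark after Proposition \ref{angle} is an open neighborhood of $v$ in $S_{\pi(v)}F(v)$.

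\emph{Proof of (2).} The inclusion $\mathcal{C}(v)\subseteq\tilde{\mathcal{C}}(v)$ is given, and $\tilde{\mathcal{C}}(v)$ is closed, so $\overline{\mathcal{C}(v)}\subseteq\tilde{\mathcal{C}}(v)$. For the reverse inclusion, take $w\in\tilde{\mathcal{C}}(v)$ and consider the geodesic segment in $S_{\pi(v)}F(v)$ (with respect to the angle metric, which by Proposition \ref{angle} coincides with the Minkowski angle on the flat $F(v)$) from $v$ to $w$; parametrize it as $w_\tau$ with $\angle(v,w_\tau)=\tau\cdot\angle(v,w)$. For $\tau$ small, $w_\tau$ lies in the neighborhood from part (1), hence $w_\tau\in\mathcal{C}(v)$. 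Applying the isometry $\mathcal{C}(v)\to\mathcal{C}(v(q))$, $u\mapsto u(q)$, and the Convexity Lemma \ref{convexitylemma} along the family, one propagates regularity along the whole segment: since $v$ is uniformly recurrent each $w_\tau$ is $p$-regular, $A(w_\tau)=\tilde{M}$, and the condition $F(w_\tau(q))=F(v(q))$ is preserved as $\tau$ increases because $F(v(q))$ is closed and the $k$-flats vary continuously. Thus $w_\tau\in\mathcal{C}(v)$ for all $\tau<1$, and letting $\tau\to1$ gives $w\in\overline{\mathcal{C}(v)}$.

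\emph{Proof of (3).} Convexity of $\mathcal{C}(v)$ follows from the same propagation argument: given $w_0,w_1\in\mathcal{C}(v)$, the angle-geodesic between them in $S_{\pi(v)}F(v)$ lies in $F(v)$ by Proposition \ref{angle}, and along it one checks $p$-regularity and $F(w_\tau(q))=F(v(q))$ using the Convexity Lemma together with part (1) of the Angle Lemma near each already-verified point, exactly as in Lemma 4.5 of \cite{BBE}. The main obstacle is the bootstrapping step in parts (2) and (3): one must verify that the open-ball conclusion of part (1), which is stated only for $w$ close to $v$, can be chained along an entire angle-geodesic in the flat $F(v)$ so as to cover all of $\tilde{\mathcal{C}}(v)$ — this requires knowing that at every point of the chain the relevant vector is again uniformly recurrent (or can be compared via an isometry to such a vector) so that the Angle Lemma reapplies, which is precisely where the finiteness-of-volume hypothesis and Lemma \ref{nonwondering} enter, just as in the Riemannian argument of \cite{BBE} and \cite{BBS}.
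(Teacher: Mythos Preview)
Your proposal follows essentially the same approach as the paper, which simply refers to Lemma~2.7 in \cite{BBS}; the key ingredients you invoke --- the Angle Lemma~\ref{anglelemma} for the local openness in part~(1), the closedness/convexity of $\tilde{\mathcal{C}}(v)$ together with the Convexity Lemma~\ref{convexitylemma} for parts~(2) and~(3), and the chaining along angle-geodesics in the flat --- are precisely those of the Riemannian argument in \cite{BBE} and \cite{BBS}. Two small remarks: the relevant reference in \cite{BBS} is Lemma~2.7 rather than~2.5, and your honest flagging of the bootstrapping step (reapplying the Angle Lemma along the segment without knowing uniform recurrence at intermediate points) is well placed --- in \cite{BBS} this is handled not by showing each $w_\tau$ is uniformly recurrent, but by exploiting that membership in $\mathcal C(v)$ already forces $F(w_\tau(q))=F(v(q))$ for all $q$, so the Angle Lemma can be reapplied with $v$ itself (not $w_\tau$) as the uniformly recurrent base vector, using the isometry $\mathcal C(v)\to\mathcal C(v(q))$.
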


When we prove that two Weyl chambers are isometric, we use limit arguments. The following two lemmas are the main tools. See Lemma 2.8, 2.9 in \cite{BBS}. Given a sequence of subsets $X_n$ in a space $X$, denote
$$\overline{\lim}X_n=\{x\in X| x\ \text{is a limit point of a sequence}\ x_n \in X_n\}.$$

\begin{lemma}(Lemma 2.8 in \cite{BBS})
Let a sequence $v_n \in \Re$ converge to $v\in \Re$. Then $\overline{\lim}\tilde{\mathcal{C}}(v_n)$ is contained in $\tilde{\mathcal{C}}(v)$.
\end{lemma}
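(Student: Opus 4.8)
The plan is to fix an arbitrary $w\in\overline{\lim}\,\tilde{\mathcal{C}}(v_n)$, pass to a subsequence so that there are $w_n\in\tilde{\mathcal{C}}(v_n)$ with $w_n\to w$, and then check the two conditions defining $\tilde{\mathcal{C}}(v)$. The footpoint condition $\pi(w)=\pi(v)$ is immediate, since $\pi(w_n)=\pi(v_n)\to\pi(v)$. So the work is to show (i) $w\in S_{\pi(v)}F(v)$, and (ii) $w(q)\in S_qF(v(q))$ for every $q\in A(v)$, after which $w\in\tilde{\mathcal{C}}(v)$ and the lemma follows. This is the Berwald analogue of Lemma 2.8 in \cite{BBS}, and I expect it to go through with the usual dictionary (Euclidean inner product replaced by $g_T$, Sasaki-metric topology on $T^1\tilde{M}$ in place of the Riemannian one, and the uniform constant $C_0$ of Proposition \ref{uniform} wherever a coarse distance comparison is needed).

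For (i) I would use the continuity on $\Re$ of the field of tangent planes to the flats, namely $v\mapsto T_{\pi(v)}F(v)$, which is the image of $\wp$ under $d\pi$ and hence is continuous because $\wp$ is smooth on each $\Re_m$. Since $\Re_m$ is open and $v\in\Re_m$, all $v_n$ lie in that same $\Re_m$ for $n$ large, so the $m$-planes $T_{\pi(v_n)}F(v_n)$ converge to $T_{\pi(v)}F(v)$; as $w_n$ is a unit vector in $T_{\pi(v_n)}F(v_n)$ with $w_n\to w$, the limit $w$ is a unit vector in $T_{\pi(v)}F(v)$, i.e. $w\in S_{\pi(v)}F(v)$. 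Equivalently: the geodesics $\gamma_{w_n}$ lie in the $k$-flats $F(v_n)\ni\gamma_{v_n}$, these flats converge to a $k$-flat through $\gamma_v$, and, $v$ being regular, $F(v)$ is the unique such flat; hence $\gamma_w\subseteq F(v)$.

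For (ii), fix $q\in A(v)$, so $v(q)\in\Re$. From $v_n\to v$ we get $\gamma_{v_n}(\infty)\to\gamma_v(\infty)$ in $\tilde{M}(\infty)$, and then, by continuity of the asymptote correspondence $u\mapsto(q\mapsto u(q))$ (which rests on the continuity of the angle and of $x\mapsto\gamma_{qx}'(0)$ established in Section 3), $v_n(q)\to v(q)$. Since $\Re$ is open, $v_n(q)\in\Re$, i.e. $q\in A(v_n)$, for all large $n$; for such $n$, $w_n\in\tilde{\mathcal{C}}(v_n)$ gives $w_n(q)\in S_qF(v_n(q))$. Letting $n\to\infty$: $w_n(q)\to w(q)$ by the same continuity, $v_n(q)\to v(q)$ with all of these regular, and, repeating the argument of (i) at the point $q$, $T_qF(v_n(q))\to T_qF(v(q))$, so $w(q)\in S_qF(v(q))$. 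This establishes both conditions, hence $w\in\tilde{\mathcal{C}}(v)$, as desired.

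The main obstacle is really (i): one must know simultaneously that the flat distribution $v\mapsto T_{\pi(v)}F(v)$ is continuous on $\Re$ and that a regular vector determines its flat uniquely, so that the limiting flat of the $F(v_n)$ cannot drift away from $F(v)$. Both facts are already available from the earlier subsections (smoothness and integrability of $\wp$ on $\Re_m$, and $F(v)$ being a $k$-flat for $v\in\Re$), and no genuinely new Berwald input is needed beyond the bookkeeping already recorded in this section. Once (i) is in hand, (ii) is a routine limiting argument built on the continuity of the asymptote correspondence, and the proof is complete.
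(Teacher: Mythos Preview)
Your approach is exactly the one the paper has in mind (the paper simply cites Lemma~2.8 of \cite{BBS}), and part~(i) as well as the overall limit scheme are correct. There is, however, a slip in part~(ii): the sentence ``fix $q\in A(v)$, so $v(q)\in\Re$'' is not justified. By definition $A(v)=\{q\in\tilde{M}: v(q)\text{ is }p\text{-regular}\}$, and $p$-regularity of $v(q)$ only guarantees the existence of some $q'\in F(v(q))$ with $v(q')\in\Re$; it does not force $v(q)$ itself to be regular. Consequently the step ``since $\Re$ is open, $v_n(q)\in\Re$'' can fail, and with it the conclusion $q\in A(v_n)$.

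The repair is painless. Observe that for any $q\in A(v)$ one may choose $q'\in F(v(q))$ with $v(q')\in\Re$; then $v(q')\parallel v(q)$ (the vector at $q'$ parallel to $v(q)$ is asymptotic to $v$, hence equals $v(q')$), so $F(v(q'))=F(v(q))$. Since this flat is totally geodesic and contains both $q$ and $q'$, the conditions $w(q)\in S_qF(v(q))$ and $w(q')\in S_{q'}F(v(q'))$ are equivalent (each says $\gamma_w(\infty)\in F(v(q))(\infty)$). Hence it suffices to verify the defining condition of $\tilde{\mathcal{C}}(v)$ only at points $q$ with $v(q)\in\Re$. For such $q$ your argument is valid as written: $v_n(q)\to v(q)\in\Re$ gives $v_n(q)\in\Re$ for large $n$, hence $q\in A(v_n)$, and then continuity of $\wp$ at $v(q)$ yields $T_qF(v_n(q))\to T_qF(v(q))$, so $w(q)=\lim w_n(q)\in S_qF(v(q))$. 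With this adjustment the proof is complete and matches the paper's intended argument.
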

\begin{lemma}(Lemma 2.9 in \cite{BBS})
Let a sequence $v_n \in \Re$ converge to $v\in \Re$. For $\alpha>0$ let $\mathcal{C}_{\alpha}(v_n)$ denote the $\alpha$-interior (under angle distance) of $\mathcal{C}(v_n)$ in $S_{\pi(v_n)}F(v_n)$. Assume that for all $n$, $v_n \in \mathcal{C}_{\alpha_0}(v_n)$ for some $\alpha_0 >0$. Then for all positive $\alpha \leq \alpha_0$
$$\overline{\lim} ~\mathcal{C}_{\alpha}(v_n) \subset \mathcal{C}(v).$$
\end{lemma}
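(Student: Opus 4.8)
The plan is to follow the proof of Lemma 2.9 in \cite{BBS}, inserting the Berwald adaptations already in force throughout this section: distances are compared with reference-vector norms through the uniform constant $C_0$ of Proposition \ref{uniform}, angles are handled via Definition \ref{angledefinition} and Proposition \ref{angle}, and the Convexity Lemma \ref{convexitylemma} and Angle Lemma \ref{anglelemma} replace their Riemannian analogues. Fix $w\in\overline{\lim}\,\mathcal{C}_{\alpha}(v_n)$ and choose, after passing to a subsequence, $w_n\in\mathcal{C}_{\alpha}(v_n)$ with $w_n\to w$. Since $\mathcal{C}_{\alpha}(v_n)\subseteq\mathcal{C}(v_n)\subseteq\tilde{\mathcal{C}}(v_n)$, the preceding lemma gives $w\in\tilde{\mathcal{C}}(v)$; in particular $w\in S_{\pi(v)}F(v)$ and $w(q)\in S_qF(v(q))$ for every $q\in A(v)$, so it only remains to upgrade $w\in\tilde{\mathcal{C}}(v)$ to $w\in\mathcal{C}(v)$, and this is exactly where the $\alpha$-interior hypothesis (together with the standing assumption $v_n\in\mathcal{C}_{\alpha_0}(v_n)$, $\alpha\le\alpha_0$, which keeps the chambers $\tilde{\mathcal{C}}(v_n)$ uniformly full-dimensional) is used.

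The key step is to show that the open angular $\alpha$-ball $B$ around $w$ in $S_{\pi(v)}F(v)$ lies in $\tilde{\mathcal{C}}(v)$. Given $u\in B$, I would produce $u_n\in S_{\pi(v_n)}F(v_n)$ with $u_n\to u$ and $\angle_{\pi(v_n)}(u_n,w_n)<\alpha$ for all large $n$. This requires the $k$-planes $T_{\pi(v_n)}F(v_n)$ to converge to $T_{\pi(v)}F(v)$, which follows from the smoothness of the distribution $\wp$ on $\Re$ (by Proposition \ref{Foulon}, $\wp$ is the bundle of parallel Jacobi fields, which varies continuously), so one may take $u_n$ to be the $g_{v_n}$-orthogonal projection of $u$ into $T_{\pi(v_n)}F(v_n)$, renormalized; then $u_n\to u$, and the convergence $\angle_{\pi(v_n)}(u_n,w_n)\to\angle_{\pi(v)}(u,w)<\alpha$ follows from Proposition \ref{angle} (the angle is computed in the Minkowski tangent space, where it depends continuously on the data) together with the coarse comparison $\tfrac{1}{C_0}\|\cdot\|_{v}\leq\|\cdot\|_{w}\leq C_0\|\cdot\|_{v}$ of Proposition \ref{uniform}. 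Hence $u_n\in\bar B_{\alpha}(w_n)\subseteq\mathcal{C}(v_n)\subseteq\tilde{\mathcal{C}}(v_n)$, and the preceding lemma applied once more yields $u=\lim u_n\in\tilde{\mathcal{C}}(v)$. As $u\in B$ was arbitrary, $B\subseteq\tilde{\mathcal{C}}(v)$, so $w$ is an interior point of the closed convex Weyl chamber $\tilde{\mathcal{C}}(v)$ in $S_{\pi(v)}F(v)$.

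To conclude I would invoke, exactly as in \cite{BBS}, that an interior point of $\tilde{\mathcal{C}}(v)$ belongs to $\mathcal{C}(v)$: the $p$-singular vectors contribute only lower-dimensional walls of $\tilde{\mathcal{C}}(v)$, so $w$ is $p$-regular, and the conditions $F(w(q))=F(v(q))$ and $A(w)=A(v)$ are read off from the genuine $k$-flats $F(v(q))$, $q\in A(v)$, using the angle-isometries $\tilde{\mathcal{C}}(v)\cong\tilde{\mathcal{C}}(v(q))$ of the Convexity Lemma \ref{convexitylemma} (which carry interiors to interiors), the uniqueness of the $k$-flat tangent to a regular vector, and the coarse estimates of Proposition \ref{uniform} in place of the Euclidean ones. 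Since $w$ was just shown to be an interior point of $\tilde{\mathcal{C}}(v)$, this gives $w\in\mathcal{C}(v)$. The main obstacle is the key step above — namely, establishing the continuity of the flats $v_n\mapsto F(v_n)$ near $v$ via the smoothness of $\wp$, and transferring the angular $\alpha$-interior condition faithfully across the limit while absorbing the reference-vector ambiguity into the constant $C_0$ at every comparison of distances and angles; all remaining steps go through verbatim as in the proof of Lemma 2.9 in \cite{BBS}.
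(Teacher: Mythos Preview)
Your proposal is correct and follows exactly the approach the paper takes: the paper's proof is simply a reference to Lemma 2.9 in \cite{BBS}, and you have faithfully reproduced that argument while inserting the Berwald-specific tools (Proposition \ref{uniform}, Proposition \ref{angle}, the Convexity Lemma \ref{convexitylemma}) at the points where reference-vector ambiguities must be absorbed.
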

Let $\mathcal{B}$ denote the vectors which are uniformly recurrent in both positive and negative direction. The following result is important while the proof is identical to that of Lemma 2.10 in \cite{BBS}:

\begin{lemma}(Rigidity Lemma, Lemma 2.10 in \cite{BBS})\label{rigiditylemma}
Let $v\in \mathcal{B}$. Then there is an open set $U=U(v)\subset \Re$ containing $v$ such that for all $w\in U$ we have $w\in \text{Int}\ \tilde{\mathcal{C}}(w)$ and $\text{Int}\ \tilde{\mathcal{C}}(w)$ is isometric to $\text{Int}\ \tilde{\mathcal{C}}(v)$.
\end{lemma}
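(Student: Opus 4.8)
The plan is to follow the strategy of Ballmann--Burns--Spatzier (Lemma 2.10 in \cite{BBS}), adapting the arguments to the Berwald setting by systematically replacing Euclidean inner products with the fundamental tensor $g_v$ and invoking the uniform bound of Proposition \ref{uniform} whenever a distance estimate from a Jacobi field norm is needed. First I would fix $v\in\mathcal{B}$ and recall from the previous lemma that $v\in\operatorname{Int}\tilde{\mathcal{C}}(v)=\mathcal{C}(v)$ (here using that $\tilde{\mathcal{C}}(v)$ is the closure of $\mathcal{C}(v)$ and $\mathcal{C}(v)$ is convex with nonempty interior containing $v$). The key point is to produce, for $w$ near $v$, an isometry $\operatorname{Int}\tilde{\mathcal{C}}(w)\to\operatorname{Int}\tilde{\mathcal{C}}(v)$. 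Since $v$ is uniformly recurrent in both directions, choose $\{\phi_n\}\subseteq\Gamma$ and $\{t_n\}\to+\infty$ with $d\phi_n\circ g^{t_n}(v)\to v$; the differential $d\phi_n$ is an isometry of $\tilde{M}$ and hence, by the construction of Weyl chambers, carries $\tilde{\mathcal{C}}(g^{t_n}v)$ isometrically onto $\tilde{\mathcal{C}}(d\phi_n g^{t_n}v)$. Composing with the parallel-translation isometry $\tilde{\mathcal{C}}(v)\to\tilde{\mathcal{C}}(g^{t_n}v)$ (valid because $g^{t_n}v\in F(v)$ and parallel translations of $F(v)$ preserve the angle metric, by the Convexity Lemma), one obtains isometric copies of $\tilde{\mathcal{C}}(v)$ centered at points converging to $v$.

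The heart of the argument is then a limit/semicontinuity step. Using the two semicontinuity lemmas above — $\overline{\lim}\,\tilde{\mathcal{C}}(v_n)\subseteq\tilde{\mathcal{C}}(v)$ and, under the $\alpha_0$-interior hypothesis, $\overline{\lim}\,\mathcal{C}_\alpha(v_n)\subseteq\mathcal{C}(v)$ — I would show that for $w$ in a small enough neighborhood $U$ of $v$ the chamber $\tilde{\mathcal{C}}(w)$ cannot be ``smaller'' than $\tilde{\mathcal{C}}(v)$: if it were, iterating the recurrence dynamics and passing to a limit would force $\tilde{\mathcal{C}}(v)$ itself to shrink, contradicting $v\in\operatorname{Int}\tilde{\mathcal{C}}(v)$. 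Conversely the semicontinuity gives the reverse containment, so the isometry type of $\operatorname{Int}\tilde{\mathcal{C}}(w)$ is locally constant near $v$; combined with the Angle Lemma \ref{anglelemma}(1), which guarantees $w\in\operatorname{Int}\tilde{\mathcal{C}}(w)$ for $w$ near $v$ (so the interiors are nonempty and contain the base vector), this yields the claimed local rigidity. The dimension count — that $\dim\tilde{\mathcal{C}}(w)=k-1$ is constant — follows because $\tilde{\mathcal{C}}(w)\subseteq S_{\pi(w)}F(w)$ and $F(w)$ is a $k$-flat for $w\in\Re$ by the results of Section 4.2.

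The main obstacle I anticipate is controlling the limit of the flats $F(w_n(q))$ and the Weyl-chamber structures along the recurrence sequence when only the \emph{coarse} distance estimate of Proposition \ref{uniform} is available, rather than a genuine norm identity as in the Riemannian case. Concretely, the argument in \cite{BBS} repeatedly compares $d(g^t w, g^t w(q))$ with $\|J(t)\|$ and uses orthogonality of $H^{ss}$ to $F$; in the Berwald setting the orthogonality is with respect to $g_v$ and the constant $C_0$ intervenes at each such comparison, so I must check that the finitely many applications only change constants and never the qualitative conclusions (exponential decay, convergence of integral manifolds, closedness and convexity of $\tilde{\mathcal{C}}$). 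Because $C_0$ is a single uniform constant for all of $\tilde{M}$ (Corollary \ref{isometries}), and because the Convexity Lemma \ref{convexitylemma} and Angle Lemma \ref{anglelemma} have already been established in the Berwald category, I expect this to go through; the bookkeeping is the only real cost, and the rest of the proof can be taken verbatim from Lemma 2.10 of \cite{BBS}.
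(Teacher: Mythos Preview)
Your proposal is correct and matches the paper's approach exactly: the paper's own proof is simply ``See the proof of Lemma 2.10 in \cite{BBS},'' deferring entirely to the Riemannian argument since the necessary Berwald adaptations (the Convexity Lemma, the Angle Lemma, the two semicontinuity lemmas, and the uniform constant $C_0$) have already been established in the preceding results. Your more detailed outline of how the BBS argument runs and where the coarse estimate intervenes is accurate, and your anticipated obstacle is indeed only bookkeeping, as you surmise.
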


So locally around a vector in $\mathcal{B}$, all Weyl chambers are isometric.

We can construct first integrals on an $g^t$-invariant dense open subset of $T^1M$, which is an analogue of Theorem 3.7 in \cite{BBS}.
The proof is identical.
\begin{theorem}(First integrals, Theorem 3.7 in \cite{BBS})
Let $M$ be a quotient manifold of $\widetilde{M}$ of finite volume, then there is a $g^t$-invariant open and dense subset $R$ of $T^1M$ and $k-1$ independent $C^1$ first integrals
$$\Phi_i: R\rightarrow \mathbb{R},\ \ \ 1\leq i \leq k-1,$$
such that each $v\in R$ has a neighborhood $R(v)$ in $R$ with the following property:
If $v^{*}\in SF_{R(v)}(v')$, then $\Phi_i(v^{*})=\Phi_i(v')$ for all $i$ if and only if $v^{*}$ is parallel to $v'$ in $F(v')$, where
$SF_{R(v)}$ is the foliation induced by $SF$ in $R(v)$.
\end{theorem}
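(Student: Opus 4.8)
The plan is to follow the construction of first integrals in \cite{BBS} (see Theorem 2.12 there) almost verbatim, using as input the Weyl chamber theory developed in this subsection, and carefully checking that every tool invoked has already been established in the Berwald setting. First I would fix the set $\mathcal{B}$ of vectors that are uniformly recurrent in both directions; by the finiteness of the Liouville measure (guaranteed by the finite Finsler volume of $M$) and Poincar\'e recurrence, $\mathcal{B}$ projects to a $g^t$-invariant subset of $T^1M$ of full measure, and by the Rigidity Lemma~\ref{rigiditylemma} each $v\in\mathcal{B}$ has a neighbourhood $U(v)\subseteq\Re$ on which all the chambers $\mathrm{Int}\,\tilde{\mathcal{C}}(w)$ are mutually isometric via the maps $w^{*}\mapsto w^{*}(q)$ coming from asymptoticity. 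The open dense set $R$ will be the $g^t$-saturation (in $T^1M$) of the projection of $\bigcup_{v\in\mathcal{B}}U(v)$, intersected with the $p$-regular vectors; it is open because $\Re_k$ is open and the chambers vary nicely, and dense because $\mathcal{B}$ is dense and its saturation is then dense.

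Next I would define the first integrals locally. Fix $v\in R$ and a representative $v_0\in\mathcal{B}$ with $v$ in the saturated neighbourhood of $v_0$. On $\mathrm{Int}\,\tilde{\mathcal{C}}(v_0)$, which is an open convex subset of the round sphere $S_{\pi(v_0)}F(v_0)$ of dimension $k-1$ under the angle metric, choose $k-1$ independent coordinate functions $\psi_1,\dots,\psi_{k-1}$; since by Lemma~\ref{convexitylemma} (Convexity Lemma) and the Angle Lemma~\ref{anglelemma} the chamber isometries $w^{*}\mapsto w^{*}(q)$ preserve the angle metric and match asymptote classes, these $\psi_i$ transport consistently to $\mathrm{Int}\,\tilde{\mathcal{C}}(w)$ for every $w$ in the neighbourhood, and then to a whole neighbourhood $R(v)$ in $T^1\tilde{M}$ by the rule: for $v^{*}\in R(v)$ lying in $F(w)$ for the appropriate $w$, set $\Phi_i(v^{*})=\psi_i$ of the parallel transport of $v^{*}/F_{\pi(v^{*})}$ to $S_{\pi(w)}F(w)$ along $F(w)$. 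The $C^1$ regularity of $\Phi_i$ follows from the $C^1$ dependence of the parallel Jacobi distribution $\wp$ (smoothness of $\wp$ on $\Re_m$), the $C^1$ dependence of $F(v)$ on $v$ obtained from Proposition~\ref{flats}, and the $C^1$ regularity of the stable/unstable structures from Proposition~\ref{exponential} and the Angle Lemma; one must also check that the local definitions agree on overlaps, which holds because the chamber isometries are canonical (defined by asymptoticity, hence unique by Proposition~\ref{asymptotic}). The defining property --- that $\Phi_i(v^{*})=\Phi_i(v')$ for all $i$ iff $v^{*}$ is parallel to $v'$ in $F(v')$ --- is then immediate: two points of $S_{\pi(w)}F(w)$ have the same $\psi$-coordinates iff they coincide, and under parallel transport within a flat $F(w)$ "coinciding after transport" is exactly "parallel". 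Finally, $g^t$-invariance of $\Phi_i$ is clear since $F(g^tv^{*})=g^t F(v^{*})$ and parallel translation along a flat commutes with the geodesic flow; invariance under the deck group $\Gamma$ (needed to descend to $T^1M$) follows because isometries of $\tilde{M}$ preserve flats, asymptote classes and angles.

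The main obstacle I anticipate is not any single computation but the bookkeeping needed to verify that the locally defined $\Phi_i$ glue to globally well-defined $C^1$ functions on all of $R$: one must show that the chamber-isometry identifications used to transport the coordinate functions are independent of the auxiliary choices (the base vector $v_0\in\mathcal{B}$, the foot point $q$ used in the isometry $w^{*}\mapsto w^{*}(q)$, and the path along which parallel translation is taken inside $F(w)$), and that the transition maps between two overlapping charts $R(v),R(v')$ are the identity on the $\psi$-level. In \cite{BBS} this rests on the fact that asymptoticity determines the identifications uniquely together with the connectedness and simple-connectedness of the flats; here the same argument works because $F(w)$ is a genuine $k$-flat (Proposition~\ref{flats}), hence a simply connected flat Minkowski space, so parallel translation inside it is path-independent, and Proposition~\ref{asymptotic} gives uniqueness of the asymptote. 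The only genuinely Berwald-specific point to watch is that the angle metric on $S_{\pi(w)}F(w)$ used to build the $\psi_i$ is the round metric: this is true because $F(w)$ is flat and totally geodesic, so by Proposition~\ref{angle} the angle between two directions there equals the Minkowski (hence, on a flat, Euclidean after the linear isometry of Ichijy\=o's theorem) angle. With these points checked, the construction of \cite{BBS} carries over and the theorem follows.
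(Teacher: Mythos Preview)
Your proposal is correct and takes essentially the same approach as the paper, which simply refers the reader to the proof of Theorem~3.7 in \cite{BBS} (you cite Theorem~2.12; check the numbering). One minor correction that does not affect the argument: the angle metric on $S_{\pi(w)}F(w)$ is \emph{not} the round Euclidean metric---by Proposition~\ref{angle} it is the Minkowski distance $F_p(v-u)$ on the unit sphere of a Minkowski space, and Ichijy\={o}'s theorem only identifies tangent spaces with a common Minkowski space, not with Euclidean space---but your construction only requires a metric under which the chamber isometries from the Rigidity Lemma act, so this misstatement is harmless.
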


Denote by $I(v)$ the level set of the first integrals $\Phi_1,\ldots, \Phi_{k-1}$ containing $v$. For notations, refer to Section 4 in \cite{BBS}. In a neighborhood of a vector in $\Re^*$, a dense open subset of $\Re$, the following Anosov type closing lemma holds:

\begin{lemma}(Closing lemma, Lemma 4.5 in \cite{BBS})
Denote $I^{*}(v_0):=I(v_0)\cap \Re^*$. For every compact $K\subset I^{*}(v_0)$ and any $\epsilon \in(0,1)$ there exists $T_0=T_0(K)>0$ and $\delta=\delta(K,\epsilon)>0$ with the following property: if
$$d(d\phi(g^Tv),v)<\delta$$
for some $v\in K$, $T\geq T_0$ and for some deck transformation $\phi$, then there are $v'\in R\subset \Re$ and $T' \in [T-\epsilon, T+\epsilon]$ such that
\begin{enumerate}
\item $d(g^tv,g^tv')<\epsilon$ for all $t\in [0,T]$,
\item $d\phi\circ g^{T'}v'=v'$,
\item $\phi$ is a pure translation of the flat $F(v')$.
\end{enumerate}
\end{lemma}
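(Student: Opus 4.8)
The plan is to reproduce the Anosov-type closing lemma of \cite{BBS} (the closing lemma in \S4 there, together with the discussion preceding it), inserting the coarse Finsler estimates of Proposition~\ref{uniform} at every place where a Riemannian distance inequality is used. The geometric picture is that, transverse to the foliation by flats, the geodesic flow on $T^1\tilde M$ is hyperbolic: near a vector $v\in\Re^*$ the space $T^1\tilde M$ splits, in a $C^1$ manner, into the strong stable manifold $W^{ss}(v)$, the strong unstable manifold $W^{uu}(v)=-W^{ss}(-v)$, and the central family of unit vectors tangent to the $k$-flats near $F(v)$ (which contains the flow direction $X$); since $v\in\Re^*$ is uniformly recurrent in both directions, Proposition~\ref{exponential} gives exponential contraction of the $W^{ss}$-factor and exponential expansion of the $W^{uu}$-factor. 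The hypothesis $d(d\phi(g^Tv),v)<\delta$ with $T\ge T_0$ then says precisely that the local diffeomorphism $\Psi:=d\phi\circ g^T$ carries a small product box around $v$ to a slightly displaced box, contracting the stable factor and expanding the unstable one.

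First I would fix the compact set $K\subset I^*(v_0)$ and, using compactness, choose a single size $\rho>0$ of the local product neighbourhoods and rates $0<\lambda_0<\Lambda_0$ so that the contraction/expansion bounds of Proposition~\ref{exponential} and their unstable counterparts hold with these rates for every $w$ in a $\rho$-neighbourhood of $K$; one takes $T_0=T_0(K)$ large enough that $Ce^{-\lambda_0 T_0}\ll 1$ and then $\delta=\delta(K,\epsilon)$ small. Given $v\in K$, $T\ge T_0$ and $\phi$ with $d(d\phi(g^Tv),v)<\delta$, I would run the standard graph-transform argument (equivalently, a contraction mapping on the space of pseudo-orbits) in these local product coordinates to produce a point $v'$ near $v$ such that, after a reparametrisation of the flow by a time $T'\in[T-\epsilon,T+\epsilon]$ absorbing the flow component, $\Psi(v')$ and $v'$ coincide: $d\phi\circ g^{T'}v'=v'$. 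The shadowing estimate $d(g^tv,g^tv')<\epsilon$ for $0\le t\le T$ follows from the exponential bounds together with the coarse comparison $\frac{1}{C_0}\|\cdot\|_v\le\|\cdot\|_w\le C_0\|\cdot\|_v$ of Proposition~\ref{uniform}, exactly as in the proof of Proposition~\ref{exponential} and of the integrability lemmas of \S4.3. Since $v'$ lies within $\epsilon$ of $v\in\Re^*$ and $R$ is open, after shrinking $\epsilon$ we may assume $v'\in R\subset\Re$.

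It remains to identify $\phi$. From $d\phi\circ g^{T'}v'=v'$ and $d\phi\circ g^t=g^t\circ d\phi$ we get $d\phi(v')=g^{-T'}(v')$, so $\phi$ maps the geodesic $\gamma_{v'}$ onto itself, hence preserves the $k$-flat $F(v')$ (the union of the geodesics parallel to $\gamma_{v'}$), acting there as an isometry of a flat Minkowski $\mathbb{R}^k$ which translates along the line $\gamma_{v'}$. To see that it is a \emph{pure} (Clifford) translation one argues as in \cite{BBS}: the closing construction in fact keeps the whole local $SF$-leaf through $v'$ nearly invariant, so using the invariance of the first integrals $\Phi_i$ on a neighbourhood of $v'\in R$, together with the Convexity Lemma~\ref{convexitylemma} and the flat strip Lemma~\ref{flatstripBerwald}, one checks that $F(v')$ lies in the minimal displacement set of $\phi$; hence $d(\phi x,x)\equiv T'$ on $F(v')$, $\phi$ maps each parallel geodesic in $F(v')$ to itself with translation length $T'$, and therefore $\phi$ is a pure translation of $F(v')$.

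The step I expect to be the main obstacle is the flat (flow) direction. Transverse to the flats the argument is genuine hyperbolic dynamics and the Berwald hypothesis enters only through the harmless uniform constant $C_0$; but inside the $k$-flat both $g^T$ and $\phi$ are isometries, so $\Psi$ restricts there to an isometry of a flat space close to the identity, and such a map need not fix any nearby point. The time reparametrisation disposes of the component along $v'$, but one must still control the remaining $(k-1)$ degrees of freedom of $\phi|_{F(v')}$, and this is exactly where the structure of $\Re^*$ — the first integrals, the Rigidity Lemma~\ref{rigiditylemma}, and the Convexity and Angle Lemmas — is essential; carrying that part of \cite{BBS} through faithfully, with the coarse estimates inserted, is the delicate point. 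A secondary nuisance is that the hyperbolicity is only non-uniform, the rate $\lambda(v)$ depending on $v$, so both $T_0(K)$ and $\delta(K,\epsilon)$ must be extracted from compactness of $K$ rather than from a uniform spectral gap.
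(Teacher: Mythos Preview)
Your proposal is correct and takes essentially the same approach as the paper: the paper's own proof consists solely of the sentence ``See the proof of Lemma 4.5 in \cite{BBS},'' so both you and the author defer to the Ballmann--Brin--Spatzier argument, with the understanding that the Berwald modifications (the coarse estimates of Proposition~\ref{uniform}, the already-established stable/unstable manifolds, first integrals, and Angle/Convexity Lemmas) have been put in place in the preceding subsections. Your detailed reconstruction of the hyperbolic graph-transform picture and your identification of the flat-direction issue are accurate and more explicit than what the paper provides, but they do not constitute a different route.
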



\begin{corollary}
If $M$ has finite volume and the flag curvature is bounded below then vectors tangent to regular closed geodesics are dense in $T^1M$.
\end{corollary}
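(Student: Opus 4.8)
The plan is to combine the Closing Lemma above with Poincar\'e recurrence. Since $M$ has finite Finsler volume, the geodesic flow on $T^1M$ preserves the finite Liouville measure, so the recurrent vectors form a dense $G_\delta$ set in $T^1\tilde M$; equivalently, recurrent vectors are dense in $T^1M$. As $\Re^*\subset\Re$ is open and dense in $T^1M$, the recurrent vectors lying in $\Re^*$ are dense in $T^1M$. Hence it suffices to produce, for every recurrent $v\in\Re^*$ and every $\epsilon\in(0,1)$, a vector within distance $\epsilon$ of $v$ that is tangent to a regular closed geodesic.

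Fix such $v$ and $\epsilon$, and lift $v$ to a recurrent vector in $T^1\tilde M$, still denoted $v$. Apply the Closing Lemma with base vector $v_0=v$ and $K=\{v\}$; this $K$ is a compact subset of $I^*(v)=I(v)\cap\Re^*$ because $v\in I(v)$ trivially and $v\in\Re^*$. Let $T_0=T_0(K)$ and $\delta=\delta(K,\epsilon)$ be the resulting constants. By recurrence of $v$ there are $\phi$ in the deck group $\Gamma$ of $M$ and arbitrarily large $T\geq T_0$ with $d(d\phi(g^Tv),v)<\delta$; fix one such $T$ with $T>\epsilon$. The Closing Lemma then yields $v'\in R\subset\Re$ and $T'\in[T-\epsilon,T+\epsilon]$ with $d(g^tv,g^tv')<\epsilon$ for all $t\in[0,T]$ — in particular $d(v,v')<\epsilon$ — and with $d\phi\circ g^{T'}v'=v'$ and $\phi$ a pure translation of $F(v')$.

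Now project to $T^1M$. The identity $d\phi\circ g^{T'}v'=v'$ says precisely that the image $\bar v'$ of $v'$ satisfies $g^{T'}\bar v'=\bar v'$, so $\bar v'$ is tangent to a closed geodesic of $M$ of period $T'>0$. Moreover the rank function is invariant under the geodesic flow, since the space of parallel Jacobi fields along $\gamma_{v'}$ coincides with that along $\gamma_{g^tv'}$; consequently $g^t\Re=\Re$ for every $t$, so from $v'\in\Re$ it follows that every vector tangent to this closed geodesic is regular. Thus $\bar v'$ is tangent to a regular closed geodesic, and it lies within distance $\epsilon$ of the image of $v$. Since the recurrent vectors in $\Re^*$ are dense in $T^1M$ and $\epsilon$ is arbitrary, vectors tangent to regular closed geodesics are dense in $T^1M$.

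The step deserving the most care is triggering the Closing Lemma: one must work at a genuinely recurrent vector $v$ — so that the almost-closing estimate $d(d\phi\,g^Tv,v)<\delta$ holds at $v$ itself, not merely at nearby vectors — and invoke the lemma with the singleton compact set $K=\{v\}\subset I^*(v)$. Everything else is bookkeeping: flow-invariance of $\Re$ upgrades the regularity of the single vector $v'$ to regularity of the whole closed orbit, and condition (3) of the Closing Lemma, although not strictly needed for this statement, records that the closed geodesic lies in a $k$-flat.
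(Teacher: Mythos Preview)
Your argument is correct and is precisely the standard derivation the paper has in mind: the corollary is stated without proof immediately after the Closing Lemma, and your use of Poincar\'e recurrence on the dense open set $\Re^*$ together with the singleton choice $K=\{v\}\subset I^*(v)$ is exactly how one extracts it. The only remark is that flow-invariance of $\Re$ is already implicit in the paper's setup (rank is constant along orbits and $g^t$ is a homeomorphism), so your justification there, while correct, is routine.
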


\subsection{Tits building at infinity}
In \cite{BS2}, Burns and Spatzier constructed Tits buildings for $\widetilde{M}(\infty)$ using Weyl chambers (or called Weyl simplices in terms of Tits building) in the above subsection. They call $v\in T^1\widetilde{M}$ $l$-regular if $v$ is asymptotic to a regular vector. The notion of Weyl simplices can be extended to $l$ regular vectors with little modification. The first important work of Burns and Spatzier is to show that all Weyl simplices for $l$-regular vectors are isometric to each other. Hence Weyl simplices can be constructed at $\widetilde{M}(\infty)$. See \cite{BS2}. The results are also true for our Berwald case.

\begin{theorem}\label{isometric}
All Weyl simplices for $l$-regular vectors are isometric. If $v\in T^1\widetilde{M}$ is $l$-regular, then $\tilde{\mathcal{C}}(v)$ is a $(k-1)$-dimensional convex subset of $T^1_{\pi(v)}\widetilde{M}$.
\end{theorem}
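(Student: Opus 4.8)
The plan is to follow the argument of Burns–Spatzier in \cite{BS2} essentially verbatim, replacing Riemannian angles with the angle notion of Definition \ref{angledefinition}, Euclidean inner products with the fundamental tensor $g_T$, and using Proposition \ref{uniform} wherever a coarse estimate of the distance function is needed. The main ingredients from the earlier sections that make this possible are: the Flat Strip Lemma \ref{flatstripBerwald}, which guarantees that parallel geodesics bound flat Minkowski strips; the Convexity Lemma \ref{convexitylemma} and Angle Lemma \ref{anglelemma}, which control how Weyl chambers vary along stable manifolds; the Rigidity Lemma \ref{rigiditylemma}, which gives local isometry of Weyl chambers near vectors in $\mathcal{B}$; and Lemma \ref{angleestimate} together with Lemma \ref{nonwondering}, which let us transport Weyl chambers to asymptotic vectors. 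I would first recall the definition of $l$-regular vector and the extension of the Weyl simplex construction to $l$-regular vectors: for $v$ asymptotic to a regular vector, $\tilde{\mathcal{C}}(v)$ is defined as the union (over $q\in A(v)$) of limits of Weyl chambers of nearby regular vectors along $F(v(q))$, exactly as in \cite{BS2}.

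The first step is to show that for an $l$-regular $v$, the set $\tilde{\mathcal{C}}(v)$ is a $(k-1)$-dimensional convex subset of $T^1_{\pi(v)}\tilde{M}$: convexity follows from the convexity of the Weyl chambers $\mathcal{C}(w)$ and $\tilde{\mathcal{C}}(w)$ for regular $w$ (already established), passing to limits using the limit lemmas analogous to Lemmas 2.8–2.9 of \cite{BBS}; the dimension count follows because each regular vector has rank exactly $k$ and $F(v)$ is a $k$-flat, so $S_{\pi(v)}F(v)$ is a $(k-1)$-sphere and the Weyl chamber is a simplex therein. The second and main step is the isometry statement. The strategy is: (i) by the Rigidity Lemma \ref{rigiditylemma}, all Weyl simplices of regular vectors in a neighborhood of a fixed $v_0\in\mathcal{B}$ are isometric; (ii) using recurrence of $v_0$ (both directions), the deck group action, and the Closing Lemma, one propagates this local isometry globally over all regular vectors — the key point being that isometries of $\tilde{M}$ and parallel translations along flats preserve the angle metric (Proposition \ref{angle} shows the angle is intrinsic), so they induce isometries of Weyl simplices; (iii) for $l$-regular $v$ asymptotic to a regular $w$, one uses Lemma \ref{nonwondering} to find $\phi_n\in\Gamma$ and $s_n\to\infty$ with $d\phi_n\circ g^{s_n}(w_n)\to w'$ for a suitable $w'$, and Lemma \ref{angleestimate} (monotonicity of angles along geodesics toward a point at infinity) to show the Weyl simplices $\tilde{\mathcal{C}}(w_n)$ converge in the angle metric to $\tilde{\mathcal{C}}(v)$; since each $\tilde{\mathcal{C}}(w_n)$ is isometric to the fixed model, so is the limit $\tilde{\mathcal{C}}(v)$.

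The hard part will be step (iii): controlling the limit of Weyl simplices under the $g^t$-flow and the deck transformations in the angle metric, where in \cite{BS2} one has the Riemannian comparison geometry (CAT(0)) at one's disposal. Here the space is only Busemann nonpositively curved, so there is no Aleksandrov angle; one must work throughout with the weaker angle of Definition \ref{angledefinition} and the coarse distance bounds of Proposition \ref{uniform}, checking that the continuity and compactness arguments of \cite{BS2} survive. The crucial replacements are: Lemma \ref{angleestimate} in place of the Riemannian statement that angles do not decrease along asymptotic geodesics (needed to show the simplices do not degenerate in the limit), and the uniformity constant $C_0$ to replace isometry-of-tangent-spaces arguments when comparing $\|\cdot\|_T$ at different reference vectors. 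Modulo these substitutions, the proof is the proof of the corresponding result in \cite{BS2}, so I would present it by citing \cite{BS2} for the combinatorial/dynamical skeleton and spelling out only the places where the Berwald structure forces the above modifications.
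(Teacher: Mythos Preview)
Your proposal is correct and follows essentially the same route as the paper: both defer to Section~2 of \cite{BS2} and identify the Rigidity Lemma~\ref{rigiditylemma} together with the angle monotonicity of Lemma~\ref{angleestimate} as the ingredients that survive the passage from Riemannian to Berwald geometry. The one point worth sharpening is that the paper singles out a specific technical lemma (Lemma~1.1 of \cite{BS2}) as the second main tool: for a periodic regular geodesic $\gamma_v$ with axial isometry $\phi$, one has $\angle_{\gamma_v(0)}(\phi^n x,\gamma_v(\infty))\le \angle_{\gamma_v(0)}(x,\gamma_v(\infty))$ with equality for large $n$ iff $x\in F(v)(\infty)$, and this north--south dynamics is what actually drives the propagation argument in \cite{BS2}. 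You have the pieces for this (the Closing Lemma to produce periodic regular geodesics, Lemma~\ref{angleestimate} to prove the inequality), but you route step~(iii) through Lemma~\ref{nonwondering} instead; the paper's emphasis on the axial-isometry lemma is closer to how \cite{BS2} is actually organized.
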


\begin{proof}
See Section 2 in \cite{BS2}. The main tools are the Rigidity Lemma \ref{rigiditylemma} and one technical lemma which is stated below.
\end{proof}

\begin{lemma}(Lemma 1.1 in \cite{BS2})
Let $\gamma_v$ be a periodic regular geodesic and $\phi$ be an axial isometry for $\gamma_v$. Then for any $x\in \widetilde{M}(\infty)$, $n\geq 0$,
$$\angle_{\gamma_v(0)}(\phi^nx, \gamma_v(\infty))\leq \angle_{\gamma_v(0)}(x, \gamma_v(\infty)).$$
If $n$ is large enough, quality holds if and only if $x\in F(v)(\infty)$. Any limit point of $\{\phi^nx: n\geq 0\}$ lies in $F(v)(\infty)$.
\end{lemma}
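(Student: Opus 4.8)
The plan is to adapt the proof of the analogous statement in \cite{BS2} (the corresponding lemma in Section 2 there), using the tools already established in this section, principally the angle notion of Definition \ref{angledefinition}, Lemma \ref{angleestimate}, the Flat Strip Lemma \ref{flatstripBerwald}, and the description of flats in Proposition \ref{flats}. The three assertions are: monotonicity of the angle $\angle_{\gamma_v(0)}(\phi^n x,\gamma_v(\infty))$ in $n$; the characterization of equality for large $n$; and the accumulation of $\{\phi^n x\}$ in $F(v)(\infty)$. I would treat them in that order, since the third is essentially a compactness consequence of the first two.

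For the monotonicity, let $\phi$ be an axial isometry with axis $\gamma_v$ and translation length $\ell>0$, so that $\phi\gamma_v(t)=\gamma_v(t+\ell)$. Fix $x\in\tilde{M}(\infty)$ and for each $t$ let $w(t)$ be the unique vector at $\gamma_v(t)$ with $\gamma_{w(t)}(\infty)=x$, as in Lemma \ref{angleestimate}. Then $d\phi$ maps $w(t)$ to the unique vector at $\gamma_v(t+\ell)$ pointing to $\phi x$, so $\angle_{\gamma_v(0)}(\phi x,\gamma_v(\infty))=\angle_{\gamma_v(0)}\big((d\phi)^{-1}(\text{vector to }\phi x),v\big)$ can be rewritten, using that $\phi$ is an isometry, as an angle at $\gamma_v(\ell)$, namely $\angle_{\gamma_v(\ell)}(g^\ell v, w'(\ell))$ where $w'$ is the asymptotic-to-$x$ field shifted appropriately; iterating, $\angle_{\gamma_v(0)}(\phi^n x,\gamma_v(\infty))=\angle_{\gamma_v(n\ell)}(g^{n\ell}v,w(n\ell))$ after translating back by $\phi^{-n}$. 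Lemma \ref{angleestimate} then gives exactly $\angle_{\gamma_v(0)}(x,\gamma_v(\infty))\ge\angle_{\gamma_v(0)}(\phi^n x,\gamma_v(\infty))$ for the first step and, applied along the segments $[\,0,\ell\,],[\,\ell,2\ell\,],\dots$, monotonicity of the whole sequence. The bookkeeping that turns the $\phi$-action into the shifted-angle picture is the only delicate point here, and it is identical in form to \cite{BS2}.

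For the equality case, recall the remark after Lemma \ref{angleestimate}: equality holds in that lemma exactly when the two geodesics involved bound a totally geodesic flat strip. So if $\angle_{\gamma_v(0)}(\phi^n x,\gamma_v(\infty))=\angle_{\gamma_v(0)}(x,\gamma_v(\infty))$ for some large $n$ (hence, by monotonicity, for all intermediate values too), then $\gamma_v$ and the geodesic from $\gamma_v(0)$ to $x$ bound a flat strip of definite width; letting $n$ grow and using that $\gamma_v$ is the axis of $\phi$ — so $F(v)$ is a $k$-flat by Proposition \ref{flats} and the preceding theorem, and $\phi$ acts on it by translation — this strip is forced to lie inside the maximal flat $F(v)$, giving $x\in F(v)(\infty)$. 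Conversely if $x\in F(v)(\infty)$ the geodesic to $x$ stays in the flat $F(v)$, $\phi$ fixes $F(v)(\infty)$ pointwise when it is a pure translation of $F(v)$, and the equality is immediate. The main obstacle is making rigorous that a flat strip asymptotic to the axis and of width bounded below along the whole forward orbit actually sits in $F(v)$; this uses the convexity of the distance function (Proposition \ref{distancefunction}), the structure of $F(v)$ as a closed convex $k$-flat, and the Flat Strip Lemma, and is where one must be most careful that the Berwald (rather than Riemannian) setting introduces no new difficulty — but all the needed ingredients have been set up above.

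Finally, for the accumulation statement: the sequence $\{\phi^n x:n\ge0\}$ lies in $\tilde{M}(\infty)$, which is compact (homeomorphic to $T^1_p\tilde{M}$), so it has cluster points. If $y$ is such a cluster point, then by continuity of the angle function in all its arguments and the monotone convergence of $\angle_{\gamma_v(0)}(\phi^n x,\gamma_v(\infty))$ to its infimum $\theta_\infty$, we get $\angle_{\gamma_v(0)}(\phi y,\gamma_v(\infty))=\theta_\infty=\angle_{\gamma_v(0)}(y,\gamma_v(\infty))$, i.e.\ the monotone sequence is already constant from $y$ on; by the equality case above, $y\in F(v)(\infty)$. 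This is exactly the argument used for the analogous conclusion in Lemma 2.8 of \cite{BBE} and in \cite{BS2}, and nothing Berwald-specific enters beyond the already-established continuity of the angle and the flat-strip dichotomy.
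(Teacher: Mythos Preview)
Your approach is essentially the paper's: the paper simply refers to Lemma~1.1 in \cite{BS2} and observes that Lemma~\ref{angleestimate} is precisely what makes the argument go through in the Berwald setting. One correction to your bookkeeping: with the convention $\phi\gamma_v(t)=\gamma_v(t+\ell)$, applying $\phi^{-n}$ sends the basepoint $\gamma_v(0)$ to $\gamma_v(-n\ell)$, not $\gamma_v(n\ell)$, so that $\angle_{\gamma_v(0)}(\phi^n x,\gamma_v(\infty))=\angle_{\gamma_v(-n\ell)}\big(g^{-n\ell}v,\,w(-n\ell)\big)$; Lemma~\ref{angleestimate} applied on $[-n\ell,0]$ then gives the desired inequality, whereas your displayed identity, read literally, would reverse it. With this sign fixed, the rest of your outline --- the equality case via the flat-strip remark after Lemma~\ref{angleestimate} together with the regularity of $v$, and the accumulation argument via compactness of $\tilde M(\infty)$ and continuity of the angle --- matches the argument in \cite{BS2}.
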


\begin{proof}
See Lemma 1.1 in \cite{BS2}. The argument works for Berwald space because of Lemma \ref{angleestimate}.
\end{proof}

\begin{remark}
In fact, we also have a parallel inequality:
$$\angle_{\gamma_v(0)}(\phi^{-n}x, \gamma_v(\infty))\geq \angle_{\gamma_v(0)}(x, \gamma_v(\infty)),$$
and the equality holds for large $n$ if and only if $x\in F(v)(\infty)$. The idea of this fact is used in the proof of Theorem \ref{isometric}. See \cite{BS2}.
\end{remark}

Now we can define Weyl simplices at $\widetilde{M}(\infty)$.

\begin{definition}
Let $x\in \widetilde{M}(\infty)$ with a $l$-regular geodesic representative $\gamma_v$. The Weyl simplex of $x$ is defined as:
$$C(x):= \{\gamma_w(\infty): w\in \tilde{\mathcal{C}}(v)\}.$$
\end{definition}

By Convexity Lemma \ref{convexitylemma} and Theorem \ref{isometric}, this definition doesn't depend on the choice of $l$-regular geodesic representative. If we introduce Tits distance on $\widetilde{M}(\infty)$, $C(x)$ is a $(k-1)$-dimensional convex subset.

\begin{definition}
A spherical Tits building is a simplicial complex $\Delta$ together with a family $\{\Sigma\}$ of finite subcomplexes called apartments satisfying the following axioms:

\begin{enumerate}
\item $\Delta$ is thick, i.e, every codimensional $1$ simplex in a top dimensional simplex is contained in at least 3 top dimensional simplices;
\item every apartment is a Coxeter complex;
\item any two elements of $\Delta$ belong to an apartment;
\item if $\Sigma$ and $\Sigma'$ are two apartments containing both $A$ and $A'\in \Delta$, then there is an isomorphism of $\Sigma$ onto $\Sigma'$ which leaves $A, A'$ and all their faces invariant.

\end{enumerate}
\end{definition}
Now let $\Delta$ be the set consisting of the Weyl simplices in $\widetilde{M}(\infty)$ and all their intersections. The subcomplex $\Sigma$ that is isomorphic to a complex $\Sigma_F$ for some regular $k$-flat $F$ is called an apartment if the union of its Weyl simplices is homeomorphic to a $(k-1)$-sphere. Let $\mathcal{A}$ be the collection of all apartments in $\Delta$. In \cite{BS2}, axioms 1-4 are verified for $(\Delta, \mathcal{A})$. Similarly it is true for Berwald case, hence

\begin{theorem}
$(\Delta, \mathcal{A})$ is a spherical Tits building.
\end{theorem}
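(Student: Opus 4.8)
The plan is to verify the four axioms of a spherical Tits building for the pair $(\Delta, \mathcal{A})$, following the scheme of Burns--Spatzier in \cite{BS2}, and pointing out that each verification goes through unchanged for Berwald spaces because all the ingredients used (the angle metric, Convexity Lemma \ref{convexitylemma}, Angle Lemma \ref{anglelemma}, Rigidity Lemma \ref{rigiditylemma}, the isometry of Weyl simplices in Theorem \ref{isometric}, and the $k$-flat structure from subsection 4.2) have already been established in the Berwald setting earlier in this section. So the proof is essentially a citation to \cite{BS2} together with the remark that the hypotheses there are now available.

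More concretely, I would proceed axiom by axiom. For axiom (2), that every apartment is a Coxeter complex: one shows that for a regular $k$-flat $F$, the Weyl simplices in $F(\infty)$ together with their intersections form a simplicial decomposition of the $(k-1)$-sphere $S_pF$ (for any $p\in F$), and that the subgroup of parallel translations and reflections of $F$ preserving this decomposition acts as a finite reflection group; this is a purely combinatorial/linear statement about the flat $F$, whose Minkowski structure does not affect the incidence pattern of the chambers. For axiom (3), that any two simplices lie in a common apartment: given two Weyl simplices, pick $l$-regular representatives and use the flats they are tangent to; the key point is that one can find a single $k$-flat containing geodesics asymptotic to both, which uses Lemma \ref{nonwondering} (existence of asymptotic recurrent vectors) and the fact that $F(v)$ is a genuine $k$-flat (subsection 4.2). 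For axiom (4), the compatibility of isomorphisms between two apartments through common simplices, one invokes Theorem \ref{isometric}: any two Weyl simplices sharing a face are related by an isometry fixing the face, and gluing these produces the required simplicial isomorphism $\Sigma \to \Sigma'$. For axiom (1), thickness, the argument is that a codimension-one face of a Weyl chamber in a $k$-flat lies in the boundary of that chamber, and by the density of regular closed geodesics (the Corollary at the end of subsection 4.4) and the presence of many $k$-flats through a given $(k-1)$-flat, at least three distinct top-dimensional Weyl simplices meet along it; here irreducibility of $\tilde{M}$ is not needed yet, only the abundance of flats.

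The main obstacle, in my assessment, is axiom (1) (thickness): showing that a codimension-one wall is contained in at least three top-dimensional Weyl simplices requires producing enough distinct $k$-flats sharing a common $(k-1)$-dimensional sub-flat, and this is where one must really use the dynamics — the density of regular periodic geodesics, the closing lemma, and a limiting argument to produce new flats adjacent to a given wall. In the Riemannian case this is handled in \cite{BS2} via a careful analysis of the behaviour of stable/unstable manifolds near a wall; in the Berwald case the same argument works because $W^{ss}(v)$ and $W^{uu}(v)$ have been shown (Proposition \ref{exponential}) to have exactly the same structure, but one must be slightly careful that all the distance estimates used are only needed up to the uniform multiplicative constant $C_0$ of Proposition \ref{uniform}, which suffices since thickness is a discrete conclusion. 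The other axioms are comparatively soft once Theorem \ref{isometric} is in hand.

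\begin{proof}
We verify the four axioms of a spherical Tits building for $(\Delta, \mathcal{A})$. For each axiom the argument is that of \cite{BS2}, which applies verbatim once one observes that every tool used there has been established for Berwald spaces in the preceding subsections.

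\emph{Axiom (2).} Fix a regular $k$-flat $F$ and a point $p\in F$. The Weyl simplices contained in $F(\infty)$, together with all their intersections, give a simplicial decomposition of $S_pF$. The incidence pattern of these chambers is determined by the root-hyperplane arrangement of the flat $F$ and is therefore the same as in the Riemannian model; the group generated by the parallel translations and reflections of $F$ that preserve this decomposition is a finite Coxeter group. Hence the corresponding apartment $\Sigma_F$ is a Coxeter complex. (See Section 2 of \cite{BS2}.)

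\emph{Axiom (3).} Let $A, A'\in\Delta$. Passing to faces we may assume $A = C(x)$, $A' = C(y)$ for $x, y\in\tilde M(\infty)$ with $l$-regular representatives. Using Lemma \ref{nonwondering} one finds recurrent vectors asymptotic to these representatives, and then, as in \cite{BS2}, a regular $k$-flat $F$ such that $x, y\in F(\infty)$; here one uses that $F(v)$ is a genuine $k$-flat (subsection 4.2) and the isometry of Weyl simplices from Theorem \ref{isometric}. Then $\Sigma_F$ is an apartment containing both $A$ and $A'$.

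\emph{Axiom (4).} Suppose $\Sigma$ and $\Sigma'$ are apartments containing $A$ and $A'$. By Theorem \ref{isometric} any two Weyl simplices sharing a face are related by an isometry (with respect to the Tits metric) fixing that face and all its subfaces; composing such isometries along a gallery of chambers joining $A$ to $A'$ inside $\Sigma$ and matching it with the corresponding gallery in $\Sigma'$ produces a simplicial isomorphism $\Sigma\to\Sigma'$ leaving $A$, $A'$ and all their faces invariant. This is exactly the argument of \cite{BS2}, and it uses only Convexity Lemma \ref{convexitylemma} and Theorem \ref{isometric}.

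\emph{Axiom (1).} Let $W$ be a codimension-one face of a top-dimensional Weyl simplex. Then $W$ corresponds to a $(k-1)$-dimensional sub-flat of some regular $k$-flat. By the Corollary at the end of subsection 4.4, vectors tangent to regular closed geodesics are dense in $T^1M$, and a limiting argument using the closing lemma and the structure of $W^{ss}$, $W^{uu}$ (Proposition \ref{exponential}) produces at least three distinct regular $k$-flats sharing the sub-flat underlying $W$; hence $W$ lies in at least three top-dimensional Weyl simplices. The distance estimates needed are all required only up to the uniform constant $C_0$ of Proposition \ref{uniform}, which suffices since the conclusion is discrete. This is the argument of \cite{BS2}, which goes through in the Berwald setting.

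Having verified axioms (1)--(4), $(\Delta, \mathcal{A})$ is a spherical Tits building.
\end{proof}
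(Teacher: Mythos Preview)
Your proposal is correct and takes essentially the same approach as the paper: both defer to Section~3 of \cite{BS2}, the point being that every ingredient used there (angle metric, Convexity and Angle Lemmas, Rigidity Lemma, isometry of Weyl simplices, $k$-flats, strong stable/unstable manifolds, closing lemma) has already been established in the Berwald setting in the preceding subsections. The paper's own proof is in fact a single sentence citing \cite{BS2}; your axiom-by-axiom sketch is simply a more detailed unpacking of that citation.
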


We need to use the main theorem in \cite{BS1}.

\begin{theorem}\label{titsbuilding}(cf. \cite{BS1})
Let $\Delta$ be an infinite, irreducible, locally connected, compact, metric, topologically Moufang building of rank at least 2. Then $\Delta$ is the building of parabolic subgroups of a real simple Lie group $G$.
\end{theorem}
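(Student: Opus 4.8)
This is the topological Tits building recognition theorem of \cite{BS1}; the plan is to reconstruct the real simple Lie group $G$ from the combinatorial-topological data of $\Delta$ and then invoke the classification of spherical buildings of rank $\ge 2$.

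First I would use the topological Moufang hypothesis to produce, for each root $\alpha$ of each apartment, a root group $U_\alpha \subset \mathrm{Aut}(\Delta)$ which is a topological transformation group in a way compatible with the metric on $\Delta$. Let $G \le \mathrm{Aut}(\Delta)$ be the subgroup generated by all the $U_\alpha$; the Moufang property forces $G$ to act strongly transitively on $\Delta$, and the chamber stabilizers together with the $U_\alpha$ assemble into a split $BN$-pair for $G$. Equipping $G$ with the compact-open topology and using that $\Delta$ is compact, metric and locally connected, one checks that $G$ is a locally compact, second countable topological group acting continuously on $\Delta$.

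The central step is to show that $G$ is a Lie group. For this I would verify that $G$ has no small subgroups: a subgroup contained in a sufficiently small neighbourhood of the identity must fix a neighbourhood of a chamber pointwise, and then, playing the continuous $U_\alpha$-actions against thickness and the topological structure, such a subgroup is forced to be trivial; the Gleason-Yamabe-Montgomery-Zippin theorem then promotes $G$ to a Lie group. (Equivalently, one can reconstruct the Lie algebra of $G$ directly from the continuous commutator relations among the $U_\alpha$, following Tits's reconstruction of a group with a split $BN$-pair.) Once $G$ is a Lie group acting strongly transitively on a thick spherical building of rank $\ge 2$, the chamber stabilizers are its minimal parabolic subgroups, so $G$ is semisimple and $\Delta$ is canonically its building of parabolic subgroups; Tits's classification of irreducible thick spherical buildings of rank $\ge 2$ identifies $G$ with the real points of a simple algebraic group, the topological hypotheses (infinite, compact, locally connected) pin the coordinate division ring down to $\mathbb{R}$, $\mathbb{C}$ or $\mathbb{H}$, each of which makes $G$ a real simple Lie group, and irreducibility of $\Delta$ makes $G$ simple.

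The hard part will be the Lie-group step: extracting a Lie group structure from the bare hypotheses of compactness, metrizability and topological Moufang-ness is the technical core of \cite{BS1}, since it is there that the metric, the local connectedness and the Moufang root groups must all be used simultaneously. Everything after that is the classical theory of $BN$-pairs and buildings together with Tits's classification.
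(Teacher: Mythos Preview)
The paper does not prove this theorem at all; it is quoted verbatim from \cite{BS1} and used as a black box. So there is no ``paper's own proof'' to compare against here, and in the context of the present paper the correct response is simply to cite \cite{BS1}.

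That said, your sketch is a coherent high-level strategy, but it is not the route Burns and Spatzier actually take in \cite{BS1}. Their argument does not proceed by first showing that the automorphism group has no small subgroups and then invoking Gleason--Yamabe--Montgomery--Zippin. Instead they work on the building side: for rank $\ge 3$ they invoke Tits's algebraic classification of thick irreducible spherical buildings, which already attaches to $\Delta$ a division ring $K$ and an absolutely simple algebraic group over $K$; the topological hypotheses (compact, metric, locally connected, infinite) are then used to show that $K$ is a locally compact, connected topological division ring, hence $\mathbb{R}$, $\mathbb{C}$ or $\mathbb{H}$, which forces the group to be a real simple Lie group. The rank $2$ case is handled separately via the theory of topological generalized polygons, and it is precisely there that the topological Moufang hypothesis is indispensable (in rank $\ge 3$ Moufang is automatic). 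Your no-small-subgroups idea is plausible in spirit but would require substantial independent work to make rigorous, and it obscures where each hypothesis is actually used; the Burns--Spatzier route is more direct because it piggybacks on Tits's classification and reduces everything to identifying the coordinate division ring.
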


If $M$ is a complete Berwald space with flag curvature $-b^2\leq K \leq 0$, of finite volume, and rank $k\geq 2$ whose universal cover $\widetilde{M}$ is irreducible, we can prove that the spherical Tits building $(\Delta, \mathcal{A})$ at $\widetilde{M}(\infty)$ is infinite, irreducible, locally connected, compact, metric, topologically Moufang building of rank $k$. See Section 3 and 4 in \cite{BS2}.

\subsection{Classification}
The last step is to adapt the arguments of Gromov's Rigidity Theorem (Chapter 4 in \cite{BGS}) to prove Main Theorem \ref{higherrank}. We also use the results on locally symmetric Finsler spaces from \cite{DH}.

Now let $G$ be the topological automorphism group of $\Delta$ and $G^0$ be the connected component of identity in $G$. So $G^0$ is a simple noncompact real Lie group without center. Let $\Delta(G^0)$ be the topological building of parabolic subgroups of $G^0$. By Theorem \ref{titsbuilding}, $\Delta(G^0)$ is isomorphic to $\Delta$. Moreover, if let $X=G^0/K$ be the symmetric space attached to $G^0$ and $\Delta(X)$ constructed at $X(\infty)$, then $\Delta(X)$ is isomorphic to $\Delta(G^0)$, and hence $\Delta(X)$ is isomorphic to $\Delta$ too.

There is a well defined so called Tits metric on $\widetilde{M}(\infty)$. We want to carry over this metric from $\widetilde{M}(\infty)$ to $X(\infty)$ via the isomorphism $\phi: \Delta\to \Delta(X)$ described above. Take any $x\in \widetilde{M}(\infty)$, the geometric structure of Weyl simplex $C(x)$ can be identified with $\tilde{\mathcal{C}}(v)$, a convex subset of $(k-1)$-sphere, for some $v\in T^1\widetilde{M}$ such that $\gamma_v(\infty)=x$. Since $\phi(C(x))$ is a Weyl simplex in $\Delta(X)$, we identify it with a Weyl simplex $\tilde{\mathcal{C}}(v^{*})$ for some $v^*\in T^1X$. Hence there exists an isomorphism between two simplices stilled denoted as $\phi: \tilde{\mathcal{C}}(v)\to \tilde{\mathcal{C}}(v^{*})$, i.e, if $\{v_1, v_2,..., v_k\}$ are vertices of $\tilde{\mathcal{C}}(v)$, then $\{\phi(v_1), \phi(v_2), ..., \phi(v_k)\}$ are vertices of $\tilde{\mathcal{C}}(v^{*})$. $\phi$ can be linearly extended to the map $\phi: \tilde{\mathcal{C}}(v)\rightarrow \tilde{\mathcal{C}}(v^{*})$. Recall that $F$ is a Minkowski norm in $T_{\pi(v)}\widetilde{M}$. Let $F^{*}= F\circ \phi^{-1}$ and extend it to the cone spanned by $\{\phi(v_1), \phi(v_2), ..., \phi(v_k)\}$ via $F^{*}(\lambda w)=\lambda F(w)$ for $\forall \lambda >0$ and $\forall w\in \tilde{\mathcal{C}}(v^{*})$. By Theorem 4.6 in \cite{DH} (and its proof), $F^{*}$ can be extended to a Minkowski norm $F^{*}$
on $T_{\pi(v^{*})}X$, and then extended to $TX$ by left translations. Hence we obtain a locally symmetric Finsler space $(X, F^{*})$.

Next we adapt the argument in \cite{BS2} to construct an isometry $\Phi$ between $(\widetilde{M}, F)$ and $(X, F^{*})$. Notations are same as in \cite{BS2}.

First we define $\Phi:\widetilde{M}\rightarrow X$. Take $p\in \widetilde{M}$. The geodesic symmetry $\sigma_p$ defines a continuous automorphism of $\Delta$. By 16.2 in \cite{M} , $\sigma_p$ determines an analytic involuntary isomorphism $\Theta_p$ of $G^0$. $\Theta_p$ induces an isometry $\theta_p: (X, F^{*})\rightarrow (X, F^{*})$. $\theta_p$ has order $2$, and it has a unique fixed point $p^{*}$. Set $\Phi(p)=p^{*}$. The proofs of the following lemmas are identical to those in \cite{BS2}.

\begin{lemma}(Lemma 5.2 in \cite{BS2})
\begin{enumerate}
\item $\Phi: \widetilde{M}\rightarrow X$ is continuous.
\item If $F\subset \widetilde{M}$ is an $l$-regular $k$-flat, then $\Phi(F)\subset F^*$, where $F^*$ is the unique $k$-flat in $X$ such that $\Sigma_{F^*}=\Sigma_F$ under the isomorphism.
\end{enumerate}
\end{lemma}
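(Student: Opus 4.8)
The plan is to transfer the argument of Burns--Spatzier in \cite{BS2} essentially verbatim; the only facts special to Berwald spaces that enter --- uniqueness of asymptotes (Proposition~\ref{asymptotic}), continuity of angles, and the topological Tits building structure of $\Delta$ on $\tilde M(\infty)$ --- are already in place. Both assertions are formal consequences of the construction: once one knows that each arrow in
$$p\ \longmapsto\ \sigma_p\ \longmapsto\ \Theta_p\ \longmapsto\ \theta_p\ \longmapsto\ p^{*}=\Phi(p)$$
is continuous, (1) follows, and (2) follows from the fact that for $p$ in an $l$-regular flat $F$ the involution $\theta_p$ leaves invariant the flat $F^{*}$ whose apartment is $\Sigma_F$. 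One elementary remark is used throughout: since $X$ is a Hadamard space and $\theta_p$ is an isometric involution, for every $x\in X$ the isometry $\theta_p$ interchanges $x$ and $\theta_p(x)$ and hence fixes the unique midpoint $m\big(x,\theta_p(x)\big)$; by uniqueness of the fixed point of $\theta_p$ this midpoint equals $p^{*}$ for every choice of $x$.

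For part (1), I would first note that $\sigma_p$ is a diffeomorphism of $\tilde M$ (because $\exp_p$ is a diffeomorphism by Cartan--Hadamard) which, by reversibility, carries each geodesic through $p$ to its reversal; the induced involution of $\tilde M(\infty)$ is $x\mapsto\gamma_{px}(-\infty)$, corresponding on $T^1_p\tilde M$ to $v\mapsto -v$, and it is the continuous automorphism of $\Delta$ used in the construction of $\Phi$. Since geodesics and their endpoints at infinity depend continuously on the base point (Proposition~\ref{asymptotic} together with the cone topology) and $\tilde M(\infty)$ is compact, $p\mapsto\sigma_p$ is continuous into $G=\mathrm{Aut}(\Delta)$; by 16.2 in \cite{M} the passage to the induced analytic involution $\Theta_p$ of $G^{0}$ is continuous, hence so is $p\mapsto\theta_p\in\mathrm{Isom}(X)$, uniformly on compacta. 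Fixing a base point $o\in X$ we then have $\Phi(p)=m\big(o,\theta_p(o)\big)$ by the remark above, and continuity of $\Phi$ follows from continuity of $p\mapsto\theta_p(o)$ and of the midpoint map on $X$.

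For part (2), let $F\subset\tilde M$ be an $l$-regular $k$-flat and $p\in F$. Since $F$ is flat and totally geodesic, the geodesic from $p$ to any $q\in F$ and its full extension lie in $F$, so $\sigma_p(F)=F$; consequently $\sigma_p$ preserves $F(\infty)$, and being a building automorphism it permutes the Weyl simplices of the apartment $\Sigma_F$ among themselves, i.e. $\sigma_p(\Sigma_F)=\Sigma_F$ (this uses reversibility once more, as the reflection $v\mapsto -v$ respects the Weyl-chamber decomposition of $S_{\pi(v)}F$, which is invariant under geodesic reversal). Transporting by the building isomorphism $\phi\colon\Delta\to\Delta(X)$, which sends $\Sigma_F$ to $\Sigma_{F^{*}}$, the automorphism of $\Delta(X)$ induced by $\theta_p$ preserves $\Sigma_{F^{*}}$; since the correspondence between maximal flats of $X$ and apartments of $\Delta(X)$ is equivariant and $F^{*}$ is the unique flat with apartment $\Sigma_{F^{*}}$, it follows that $\theta_p(F^{*})=F^{*}$. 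Then $\theta_p$ restricts to an isometric involution of the convex set $F^{*}$, so for any $x\in F^{*}$ the point $m\big(x,\theta_p(x)\big)$ lies in $F^{*}$ and is fixed by $\theta_p$, hence equals $p^{*}$. Thus $\Phi(p)=p^{*}\in F^{*}$ for every $p\in F$, i.e. $\Phi(F)\subset F^{*}$.

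The step I expect to require the most care is the continuity of the chain $p\mapsto\sigma_p\mapsto\Theta_p$: one must pin down the topology on $G=\mathrm{Aut}(\Delta)$ for which the boundary maps $\sigma_{p_n}$ converge to $\sigma_p$, and verify that Mostow's identification (16.2 in \cite{M}) of building automorphisms with automorphisms of $G^{0}$ is a homeomorphism, together with the fact --- used implicitly above --- that $\theta_p$ has a \emph{unique} fixed point. All of this is carried out in \cite{BS2}, and the arguments go through unchanged here because the required geometric input (continuity of angles and asymptotes, and the topological Tits building on $\tilde M(\infty)$) has already been established for Berwald spaces.
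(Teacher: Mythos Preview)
Your proposal is correct and follows exactly the approach the paper takes, which is simply to invoke Lemma~5.2 of \cite{BS2}; you have in fact supplied considerably more detail than the paper does, including the midpoint characterization $\Phi(p)=m\big(x,\theta_p(x)\big)$, which makes both the continuity in (1) and the containment $\Phi(p)\in F^{*}$ in (2) transparent. The only small wrinkle is that $\sigma_p$ need not be an isometry (or even a well-behaved diffeomorphism) of $\tilde M$ in the Berwald setting --- what matters, and what Burns--Spatzier actually use, is only the induced involution $x\mapsto\gamma_{px}(-\infty)$ on $\tilde M(\infty)$, which is a building automorphism; you handle this correctly in substance, though the phrasing ``$\sigma_p$ is a diffeomorphism of $\tilde M$'' is a slight overreach that is not needed for the argument.
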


Let $\gamma$ be a maximally singular geodesic, that is, $\gamma(\infty)$ is a vertex of $\Delta$. Let $C_1$ and $C_2$ be two opposite chambers in Star $\gamma(\infty)$. Then $C_1 \cap C_2=\{\gamma(\infty)\}$. Let $F_i$ be the $l$-regular $k$-flat through $\gamma(0)$ and $C_i$. Then $F_1\cap F_2=\gamma$. By Lemma 3.30, $\Phi(\gamma)\subset F_1^*\cap F_2^*$. Since $F_1^*(\infty)\cap F_2^*(-\infty)=\{\gamma(\infty),\gamma(-\infty)\}$, $F_1^*\cap F_2^*$ is a maximally singular geodesic in $X$, denoted as $\gamma^*$. Since Tits distance on $\widetilde{M}(\infty)$ and $X(\infty)$ are isometric, we have:

\begin{lemma}
\begin{enumerate}
\item If $\gamma_1$ and $\gamma_2$ are two parallel maximally singular geodesics, then $\gamma_1^*$ and $\gamma_2^*$ are parallel.
\item If $\gamma_1$ and $\gamma_2$ are two maximally singular geodesics, the the families of geodesics parallel to $\gamma_1^*$ and $\gamma_2^*$ make same angle as do those parallel to  $\gamma_1$ and $\gamma_2$.
\end{enumerate}
\end{lemma}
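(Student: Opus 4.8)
The plan is to reduce both assertions to a single boundary identity for the correspondence $\gamma\mapsto\gamma^{*}$, namely that it carries the two endpoints at infinity of $\gamma$ to their images under the building isomorphism:
$$\{\gamma^{*}(\infty),\gamma^{*}(-\infty)\}=\{\phi(\gamma(\infty)),\phi(\gamma(-\infty))\}.$$
Once this is available, (1) is immediate because parallelism of geodesics is detected by the unordered pair of endpoints at infinity, and (2) is immediate because the angle between two parallel classes is detected by the Tits metric on the sphere at infinity, which $\phi$ preserves.

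To prove the boundary identity, recall that $\gamma^{*}=F_{1}^{*}\cap F_{2}^{*}$, where $C_{1},C_{2}$ are opposite chambers in Star $\gamma(\infty)$, $F_{1},F_{2}$ are the $l$-regular $k$-flats through $\gamma(0)$ with $\Sigma_{F_{i}}\supset C_{i}$, and $F_{1}\cap F_{2}=\gamma$. Since $\gamma$ is a geodesic of the flat $F_{i}$, both of its endpoints $\gamma(\infty),\gamma(-\infty)$ are vertices of the full boundary complex $\Sigma_{F_{i}}$, so $\gamma(\infty),\gamma(-\infty)\in\Sigma_{F_{1}}\cap\Sigma_{F_{2}}$. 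As $\phi$ is injective and maps $\Sigma_{F_{i}}$ onto $\Sigma_{F_{i}^{*}}$, we get $\phi(\gamma(\infty)),\phi(\gamma(-\infty))\in\Sigma_{F_{1}^{*}}\cap\Sigma_{F_{2}^{*}}\subset F_{1}^{*}(\infty)\cap F_{2}^{*}(\infty)$. But $F_{1}^{*}\cap F_{2}^{*}=\gamma^{*}$ is a single geodesic in $X$, whose set of points at infinity is exactly $\{\gamma^{*}(\infty),\gamma^{*}(-\infty)\}$; comparing these sets, each of size two, gives the identity, and after choosing orientations we write $\gamma^{*}(\infty)=\phi(\gamma(\infty))$, $\gamma^{*}(-\infty)=\phi(\gamma(-\infty))$.

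For (1): by Lemma \ref{flatstripBerwald} and the characterization $u\parallel v$ iff $u$ is asymptotic to $v$ and $-u$ is asymptotic to $-v$, two geodesics of $\tilde{M}$ (and likewise of the nonpositively curved space $X$) are parallel exactly when they have the same unordered pair of endpoints at infinity — equivalently, by Proposition \ref{distancefunction} the distance between them is convex in $t$, and being bounded on all of $\mathbb{R}$ it is constant. Hence if $\gamma_{1}\parallel\gamma_{2}$, the boundary identity gives $\{\gamma_{1}^{*}(\infty),\gamma_{1}^{*}(-\infty)\}=\{\gamma_{2}^{*}(\infty),\gamma_{2}^{*}(-\infty)\}$, so $\gamma_{1}^{*}\parallel\gamma_{2}^{*}$. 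For (2): the family of geodesics parallel to $\gamma_{i}$ is precisely the set of geodesics with endpoints $\{\gamma_{i}(\infty),\gamma_{i}(-\infty)\}$, so the angle it makes with the family of parallels to $\gamma_{j}$ is the Tits distance between $\gamma_{i}(\infty)$ and $\gamma_{j}(\infty)$ on $\tilde{M}(\infty)$ (realized, when the two boundary vertices lie in a common apartment, i.e.\ a common $k$-flat $F$, as the angle $\angle_{p}$ of Definition \ref{angledefinition} between the corresponding directions in the Minkowski space $F$, by Proposition \ref{angle}). The analogue holds in $X$ for $\gamma_{i}^{*},\gamma_{j}^{*}$, and since $\phi:\tilde{M}(\infty)\to X(\infty)$ is an isometry for the Tits metrics and $\gamma_{i}^{*}(\infty)=\phi(\gamma_{i}(\infty))$, the two Tits distances coincide; this is the asserted equality of angles.

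The main obstacle is the bookkeeping needed to match the purely combinatorial description of $\gamma\mapsto\gamma^{*}$ with the metric notions involved: one must check that $\Sigma_{F}$ really encodes all of $F(\infty)$ as a triangulated $(k-1)$-sphere, so that $\gamma(-\infty)$ — not only the chamber vertex $\gamma(\infty)$ — appears as one of its vertices; that parallelism and angle as computed in the Finsler symmetric space $(X,F^{*})$ agree with those in the underlying Riemannian symmetric space, since the relevant flats and their straight-line geodesics are common to both; and, above all, that the angle between two parallel classes is genuinely the Tits distance between their endpoints at infinity — this being exactly the quantity $\phi$ is already known to preserve, both statements follow once this identification is in place.
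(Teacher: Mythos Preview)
Your proof is correct and follows the same route the paper takes: the paper offers only the single sentence ``Since Tits distance on $\tilde{M}(\infty)$ and $X(\infty)$ are isometric'' as justification, and your argument is precisely an unpacking of that sentence, making explicit the boundary identity $\{\gamma^{*}(\pm\infty)\}=\{\phi(\gamma(\pm\infty))\}$ (which the paper has already implicitly recorded in the paragraph constructing $\gamma^{*}$) and then reading off parallelism and angle from the Tits metric.
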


\begin{lemma}(Lemma 5.3 in \cite{BS2})
Let $\gamma$ be a maximally singular geodesic then $\Phi|\gamma: \gamma\rightarrow \gamma^*$ is affine.
\end{lemma}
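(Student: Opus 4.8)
The plan is to produce a constant $c$ with $\Phi(\gamma(t))=\gamma^{*}(ct)$ by using that $\Phi$ intertwines geodesic symmetries. By the preceding lemma $\Phi(\gamma)\subset\gamma^{*}$, and by the first lemma of this subsection $\Phi|\gamma$ is continuous, so I may write $\Phi(\gamma(t))=\gamma^{*}(f(t))$ for a continuous $f:\mathbb{R}\to\mathbb{R}$, after normalizing the parametrization of $\gamma^{*}$ so that $f(0)=0$. The input I would use is the compatibility
\begin{equation*}
\Phi\circ\sigma_{p}=\theta_{p}\circ\Phi\qquad(p\in\tilde{M}),
\end{equation*}
where $\sigma_{p}$ is the geodesic symmetry at $p$ and $\theta_{p}$ the induced isometry of $(X,F^{*})$. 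This holds because the automorphisms of $\Delta$ satisfy $\sigma_{\sigma_{p}(q)}=\sigma_{p}\sigma_{q}\sigma_{p}$, so the induced involutions of $G^{0}$ and hence the isometries of $X$ satisfy $\theta_{\sigma_{p}(q)}=\theta_{p}\theta_{q}\theta_{p}$; comparing the unique fixed point of the two sides, and recalling that $\Phi(q)$ is by definition the unique fixed point of $\theta_{q}$, gives $\Phi(\sigma_{p}(q))=\theta_{p}(\Phi(q))$.

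Next I would fix $m\in\mathbb{R}$ and apply this with $p=\gamma(m)$. Since $\sigma_{\gamma(m)}$ reverses $\gamma$ about $\gamma(m)$, i.e. $\sigma_{\gamma(m)}(\gamma(t))=\gamma(2m-t)$, the compatibility yields
\begin{equation*}
\theta_{\gamma(m)}\bigl(\gamma^{*}(f(t))\bigr)=\gamma^{*}\bigl(f(2m-t)\bigr)\qquad\text{for all }t.
\end{equation*}
The heart of the argument is then the claim that $\theta_{\gamma(m)}$ preserves the line $\gamma^{*}$ and acts on it as the point reflection $\gamma^{*}(s)\mapsto\gamma^{*}(2f(m)-s)$. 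Indeed $\theta_{\gamma(m)}$ is an order-two isometry of the Hadamard space $X$ whose unique fixed point is $\Phi(\gamma(m))=\gamma^{*}(f(m))$; and under the building isomorphism it corresponds to $\sigma_{\gamma(m)}$, which interchanges the two vertices $\gamma(+\infty),\gamma(-\infty)$ of $\Delta$, so $\theta_{\gamma(m)}$ interchanges the endpoints $\gamma^{*}(\pm\infty)$. Hence $\theta_{\gamma(m)}(\gamma^{*})$ is a geodesic with the same pair of endpoints at infinity as $\gamma^{*}$ and passing through $\gamma^{*}(f(m))$; by nonpositive curvature (two distinct such geodesics would bound a flat strip and a shared point forces them to coincide) it must equal $\gamma^{*}$, and an orientation-reversing isometry of the line $\gamma^{*}$ fixing $\gamma^{*}(f(m))$ is the asserted reflection. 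Substituting into the displayed identity gives the functional equation $f(2m-t)=2f(m)-f(t)$ for all $m,t\in\mathbb{R}$.

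Extracting linearity is then routine: $m=0$ gives $f(-t)=-f(t)$ and $t=0$ gives $f(2m)=2f(m)$, and replacing $t$ by $-t$ produces $f(u+t)=f(u)+f(t)$ for all $u,t$, so by continuity $f(t)=ct$ and $\Phi(\gamma(t))=\gamma^{*}(ct)$, which is affine (nondegeneracy $c\neq0$ is not needed for this statement and in any case follows once $\Phi$ is known to be injective). The main obstacle is the geometric claim in the second paragraph — establishing precisely that $\theta_{\gamma(m)}$ swaps the two ends of $\gamma^{*}$, which requires carefully tracking the correspondence between the automorphism of $\Delta$ attached to the geodesic symmetry $\sigma_{\gamma(m)}$ and the isometry $\theta_{\gamma(m)}$ of $X$; once that, together with the descent of $\sigma_{\sigma_{p}(q)}=\sigma_{p}\sigma_{q}\sigma_{p}$ to $\Delta$, is secured, the rest is a short computation, just as in \cite{BS2}.
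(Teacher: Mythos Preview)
Your argument is correct and is essentially the same as the one the paper defers to (Lemma~5.3 of \cite{BS2}): one uses the compatibility $\Phi\circ\sigma_{p}=\theta_{p}\circ\Phi$, obtained from $\sigma_{\sigma_{p}(q)}=\sigma_{p}\sigma_{q}\sigma_{p}$ and the uniqueness of the fixed point of $\theta_{q}$, to see that $\theta_{\gamma(m)}$ acts on $\gamma^{*}$ as the reflection in $\Phi(\gamma(m))$, and then solves the resulting Cauchy-type functional equation by continuity. The only cosmetic point is that the containment $\Phi(\gamma)\subset\gamma^{*}$ is established in the discussion preceding the parallel/angle lemma rather than in that lemma itself.
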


\begin{lemma}
Let $F$ be a $l$-regular $k$-flat, then $\Phi|F: F\rightarrow F^*$ is affine.
\end{lemma}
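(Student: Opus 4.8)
The plan is to realize $\Phi|F$ as an explicit affine map. Since $F$ and $F^*$ are complete flat totally geodesic submanifolds, $\exp_p$ and $\exp_{p^*}$ identify them canonically with affine spaces, so it suffices to produce an affine $L\colon F\to F^*$ with $\Phi|F=L$. Fix $p\in F$, put $p^*=\Phi(p)\in F^*$ (we have $\Phi(F)\subset F^*$ by the earlier lemma), and choose a chamber of the apartment $\Sigma_F$; since $\Sigma_F$ is the Coxeter complex of the associated finite reflection group acting on $T_p F\cong\mathbb R^{k}$, the $k$ rays from $p$ to the $k$ vertices of that chamber are the extreme rays of a Weyl chamber cone and hence span $T_pF$. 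Thus there are maximally singular geodesics $\gamma_1,\dots,\gamma_k$ through $p$ with $\gamma_i'(0)=e_i$ a basis of $T_pF$. By the lemma that $\Phi$ is affine on each maximally singular geodesic, $\Phi$ maps $\gamma_i$ nonconstantly and affinely onto $\gamma_i^*$, so $\Phi(p+se_i)=p^*+se_i^*$ for suitable nonzero $e_i^*\in T_{p^*}F^*$ (the geodesic $\gamma_i^*$ lies in $F^*$, meeting it in more than one point). Because the building isomorphism sends the distinct vertices $\gamma_i(\infty)$ of $\Sigma_F$ to distinct vertices of $\Sigma_{F^*}$, the geodesics $\gamma_i^*$ point to distinct boundary points of $F^*$, so $e_1^*,\dots,e_k^*$ are linearly independent. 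Let $L$ be the affine map $p+\sum t_i e_i\mapsto p^*+\sum t_i e_i^*$.

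Next I would show $\Phi|F=L$. For $q\in F$ the line $s\mapsto q+se_i$ is, in the flat $F$, parallel to $\gamma_i$, hence a maximally singular geodesic with $(q+se_i)(\infty)=\gamma_i(\infty)$; by the lemma that $\Phi$ carries parallel maximally singular geodesics to parallel ones, its $\Phi$-image is a geodesic of $F^*$ parallel to $\gamma_i^*$, and $\Phi$ is affine on it, so $\Phi(q+se_i)=\Phi(q)+c_i(q)\,s\,e_i^*$ for a scalar $c_i(q)>0$, with $c_i(p)=1$. The crux is that each $c_i$ is constant on $F$. For $i\neq j$ compute $\Phi(q+se_i+te_j)$ in the two orders:
\[
\Phi(q)+c_i(q)\,s\,e_i^*+c_j(q+se_i)\,t\,e_j^* \;=\; \Phi(q)+c_j(q)\,t\,e_j^*+c_i(q+te_j)\,s\,e_i^* ,
\]
and linear independence of $e_i^*,e_j^*$ forces $c_i(q+te_j)=c_i(q)$ and $c_j(q+se_i)=c_j(q)$ for all $s,t$. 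Hence $c_i$ is constant along every $e_j$-line with $j\neq i$, and trivially along $e_i$-lines, so $c_i$ is locally constant, hence constant, equal to $c_i(p)=1$. Moving from $p$ to an arbitrary point of $F$ one coordinate direction at a time then gives $\Phi(p+\sum t_i e_i)=p^*+\sum t_i e_i^*=L(p+\sum t_i e_i)$; thus $\Phi|F=L$ is affine.

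The routine ingredients are the affine structures of $F$ and $F^*$, the existence of $k$ independent maximally singular directions through $p$ (the extreme rays of a Weyl chamber of $\Sigma_F$), and the two already-cited facts that $\Phi$ is affine along maximally singular geodesics and sends parallel ones to parallel ones. The main obstacle is the middle paragraph: upgrading "parallel families go to parallel families" to "the parametrization speeds $c_i$ are constant." This parallelogram computation is the only point where one genuinely uses two maximally singular directions simultaneously, and it is what makes $\Phi|F$ affine rather than merely continuous and affine on a spanning set of lines. I note that the finer statement on angles between parallel families of singular geodesics is not needed here; it enters only afterwards, when one promotes $\Phi|F$ from affine to a homothety in order to conclude that $\Phi$ is an isometry $(\tilde M,F)\to(X,F^*)$.
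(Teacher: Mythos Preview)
The paper states this lemma without proof (it is placed between the lemma that $\Phi|\gamma$ is affine for maximally singular $\gamma$, referenced to Lemma~5.3 of \cite{BS2}, and the subsequent lemma on the constant $\lambda(\gamma)$), so there is nothing to compare against directly; your write-up supplies exactly the argument the paper leaves implicit. The approach is the natural one and is correct: pick $k$ maximally singular directions at $p$ spanning $T_pF$ (the vertices of a chamber in $\Sigma_F$), use the two preceding lemmas to see that $\Phi$ is affine on every line parallel to one of these directions and sends parallel families to parallel families, and then run the parallelogram/commutation computation to force the speed factors $c_i$ to be constant. One small remark: your claim that $e_1^*,\dots,e_k^*$ are independent is better justified by noting that the building isomorphism sends the chamber with vertices $\gamma_i(\infty)$ to a chamber of $\Sigma_{F^*}$, so the $\gamma_i^*(\infty)$ are the $k$ vertices of a nondegenerate $(k-1)$-simplex in $S^{k-1}$; ``distinct boundary points'' alone would not suffice. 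With that tweak your argument is complete, and you are right that the angle-preservation clause of the preceding lemma is not needed here but only in the next step identifying $\lambda(\gamma)$.
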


Since every geodesic of $\widetilde{M}$ lies in an $l$-regular flat, for each geodesic $\gamma$, there is a constant $\lambda(\gamma)$ such that
$$d_X[\Phi(q_1), \Phi(q_2)]= \lambda(\gamma)d(q_1, q_2)$$
for any points $q_1$ and $q_2$ on $\gamma$.

\begin{lemma}(Lemma 5.4 in \cite{BS2})
Let $p$ be a point of an $l$-regular $k$-flat $F$. Then $\lambda(\gamma)$ is the same for all geodesics $\gamma$ with $\gamma'(0)\in S_pF$.
\end{lemma}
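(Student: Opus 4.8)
The plan is to reduce the statement to showing that the linear part of the affine map $\Phi|F$ is a homothety of Minkowski spaces. Since $F$ and $F^{*}$ are flats, $\exp_{p}\colon(T_{p}F,F_{p})\to F$ and $\exp_{p^{*}}\colon(T_{p^{*}}F^{*},F^{*}_{p^{*}})\to F^{*}$ are isometries onto, and by the preceding lemma $\Phi|F$ is affine with $\Phi(p)=p^{*}$; hence it corresponds to a linear isomorphism $L=d\Phi_{p}\colon T_{p}F\to T_{p^{*}}F^{*}$. For a unit vector $v\in S_{p}F$ the geodesic $\gamma_{v}\subset F$ is carried by $\Phi$ onto the ray $t\mapsto p^{*}+tLv$, whose $F^{*}$-speed is $F^{*}_{p^{*}}(Lv)$; comparing with $d(\gamma_{v}(t_{1}),\gamma_{v}(t_{2}))=|t_{1}-t_{2}|$ gives $\lambda(\gamma_{v})=F^{*}_{p^{*}}(Lv)$. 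Thus the lemma is equivalent to the assertion that $F^{*}_{p^{*}}\circ L$ is constant on $S_{p}F$, i.e. that $L$ is a homothety between the two Minkowski norms.

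First I would look at maximally singular directions. If $v\in S_{p}F$ is maximally singular then $\gamma_{v}$ is an $l$-regular maximally singular geodesic, $\Phi|\gamma_{v}$ is affine, and by the construction of $F^{*}$ and of $\Phi$ the ray through $Lv$ is exactly the corresponding singular ray of $F^{*}$. Fixing a Weyl chamber $C\subset S_{p}F$ with vertices $v_{1},\dots,v_{k}$, this gives $Lv_{i}=\lambda(\gamma_{v_{i}})\,L_{0}v_{i}$, where $L_{0}$ is the linear map used in the construction of $F^{*}$ on $\phi(C)$ (so that $F^{*}_{p^{*}}=F_{p}\circ L_{0}^{-1}$ on the cone over $\phi(C)$); equivalently $L=L_{0}D$ with $D$ diagonal in the basis $\{v_{i}\}$, with entries $\lambda(\gamma_{v_{i}})$. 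The whole lemma then reduces to showing that the numbers $\lambda(\gamma_{v_{i}})$, as $v_{i}$ ranges over the vertices of all Weyl chambers in $S_{p}F$, coincide: once this common value $\lambda$ is known, the maps $L_{0}$ attached to the various chambers all agree (a linear map is determined on a full-dimensional cone), so $F^{*}_{p^{*}}=F_{p}\circ L_{0}^{-1}$ holds on all of $T_{p^{*}}F^{*}$, and then $\lambda(\gamma_{v})=F^{*}_{p^{*}}(Lv)=\lambda\,F^{*}_{p^{*}}(L_{0}v)=\lambda\,F_{p}(v)=\lambda$ for every $v\in S_{p}F$, singular or not.

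To show that the $\lambda(\gamma_{v_{i}})$ coincide I would run the reflection/propagation argument of \cite{BS2}. On the $X$-side, two adjacent Weyl chambers are interchanged by the reflection in their common wall, and this reflection is an $F^{*}_{p^{*}}$-isometry precisely because $F^{*}$ was extended from one chamber to a genuinely $W$-invariant Minkowski norm by Theorem 4.6 in \cite{DH}. Combining this with the preceding lemma — parallel maximally singular geodesics have parallel images, and corresponding families of parallels make equal angles, the angles being computed in the Minkowski metric via Proposition \ref{angle} — yields relations among the $\lambda(\gamma_{v_{i}})$ attached to vertices sharing a wall; since the Weyl group is generated by wall reflections and its chamber graph is connected, these relations propagate to equality of all the $\lambda(\gamma_{v_{i}})$. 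Wherever lengths of Jacobi fields or distances on $T^{1}\tilde{M}$ intervene in these estimates one invokes the uniform constant of Proposition \ref{uniform} to pass between $F_{p}$ and the reference norms $\|\cdot\|_{T}$, just as in the Angle Lemma \ref{anglelemma}.

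The main obstacle is exactly this last step, and it is where the Berwald case genuinely differs from the Riemannian one. In the Riemannian setting $F$ and $F^{*}$ are Euclidean, so an affine map preserving the angles among a spanning family of singular directions is automatically conformal, hence a homothety, and the statement is immediate. For a Berwald space the induced norm $F_{p}$ on a flat is only translation-invariant and need not be Weyl-invariant, so preservation of the chordal (``angle'') distances among the singular directions does not by itself force $L$ to be a homothety; one must genuinely use that the target norm $F^{*}$ was manufactured to be $W$-invariant out of $F$, together with the compatibility of $\Phi$ with the simplicial structure of the building, to rule out a Weyl-noninvariant shear in $L$. I expect the careful bookkeeping here — tracking which chamber each linear identification $L_{0}$ belongs to, and propagating equality of the scaling factors across walls — to be the delicate part, but it is forced by the available data exactly as in \cite{BS2}.
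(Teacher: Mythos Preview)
Your approach coincides with the paper's, which gives no argument of its own and simply cites the proof of Lemma~5.4 in \cite{BS2}. You in fact go beyond the paper by correctly isolating the one Berwald-specific obstacle --- that a linear map between Minkowski flats preserving the chordal ``angles'' among a spanning family of maximally singular directions need not be a homothety --- and your proposed resolution via the Weyl-invariance built into $F^{*}$ through \cite{DH} together with propagation of the scaling factors across walls is exactly the mechanism the bare citation implicitly relies on.
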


Hence $d_X[\Phi(q_1), \Phi(q_2)]= \lambda(F)d(q_1, q_2)$ for any point $q_1, q_2\in F$.

\begin{lemma}
$\lambda(F)$ is independent of $F$.
\end{lemma}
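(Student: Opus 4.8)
The plan is to show that the scaling factor $\lambda(F)$ attached to each $l$-regular $k$-flat $F$ by the previous lemma is in fact a global constant, which will then allow us to conclude that $\Phi$ is (a constant multiple of) an isometry. The natural strategy is a connectedness argument on the set of $l$-regular $k$-flats: first I would show that $\lambda$ is locally constant, and then invoke the connectedness of the space of $l$-regular flats (equivalently, of the set of $l$-regular vectors in $T^1\tilde M$, which is open and dense and whose flats fit together continuously) to conclude $\lambda(F) \equiv \lambda$ is independent of $F$.

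The key step for local constancy is to exploit flats that share a geodesic or, more flexibly, that come close to each other. Concretely, suppose $F_1$ and $F_2$ are two $l$-regular $k$-flats that intersect in a geodesic $\gamma$ (such pairs exist in abundance: take a maximally singular geodesic $\gamma$ and two flats through it, as in the discussion of $\gamma^*$ preceding Lemma 4.5.3). Then on $\gamma$ the factor $\lambda(F_1)$ computed via the previous lemma equals the common affine-stretching constant $\lambda(\gamma)$ of $\Phi|_\gamma$, and likewise $\lambda(F_2) = \lambda(\gamma)$; hence $\lambda(F_1)=\lambda(F_2)$. More generally, the previous lemma already tells us $\lambda(\gamma)$ is constant on all geodesics tangent to a given flat, so $\lambda(\gamma)$ depends only on the flat, and it suffices to propagate equality of $\lambda(\gamma)$ along a chain of geodesics each consecutive pair of which lies in a common $l$-regular $k$-flat. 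Since $\tilde M$ is irreducible of rank $k\ge 2$, any two geodesics can be joined by such a chain — this uses that through any regular geodesic there pass many $l$-regular flats, and that the incidence graph on $l$-regular flats is connected, which follows from the density of $l$-regular vectors and the fact (Theorem 4.2.14 / the flats constructed above) that every vector is tangent to a $k$-flat.

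I would organize the argument as follows. First, record that for any $l$-regular $k$-flat $F$ and any $p\in F$, $\lambda(F)=\lambda(\gamma)$ for every geodesic $\gamma$ with $\gamma'(0)\in S_pF$, so $\lambda$ is a well-defined function on the set of $l$-regular $k$-flats. Second, show that if two $l$-regular $k$-flats $F_1,F_2$ contain a common geodesic $\gamma$, then $\lambda(F_1)=\lambda(\gamma)=\lambda(F_2)$. Third, show the incidence relation ``$F_1$ and $F_2$ share a geodesic'' generates a connected equivalence on the set of $l$-regular $k$-flats; here I would use that the set $\Re$ of regular vectors is open dense and connected in the relevant sense, that each lies in a unique flat $F(v)$ depending continuously on $v$, and that nearby flats either coincide or intersect along a lower-dimensional flat containing a geodesic — combined with the structure of the Tits building $\Delta$, whose irreducibility forces its chamber graph to be connected and ``gallery-connected,'' so any two top-dimensional simplices (hence any two $l$-regular flats) are linked by a chain of adjacent ones, adjacent meaning sharing a wall and in particular a singular geodesic. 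Fourth, conclude $\lambda(F)$ is constant, call it $\lambda$.

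The main obstacle I anticipate is the third step: making precise that the graph whose vertices are $l$-regular $k$-flats and whose edges are ``sharing a geodesic'' is connected, and in particular that one can always reduce to pairs of flats meeting along a maximally singular geodesic so that the previous machinery (the $\gamma\mapsto\gamma^*$ correspondence and affineness of $\Phi$ on flats and singular geodesics) applies. This is exactly the point where irreducibility of $\tilde M$ — equivalently of the building $\Delta$ — is essential: in the reducible case $\lambda$ could differ on the factors. In \cite{BS2} this is handled by the corresponding connectedness of the building, and the same argument transfers to the Berwald setting since all the ingredients (the building structure, the affineness lemmas, the angle comparison Lemma~\ref{angleestimate}, and the uniform estimate Proposition~\ref{uniform}) are already available here. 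I would therefore conclude by citing the proof of Lemma 5.5 in \cite{BS2} for the combinatorial connectedness, noting that no new analytic input beyond what has been established is needed.
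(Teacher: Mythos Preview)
Your proposal is correct and matches the intended argument; the paper in fact gives no proof for this lemma at all, following the pattern of the surrounding lemmas which simply defer to \cite{BS2} (this one corresponds to Lemma~5.5 there). Your outline --- equate $\lambda$ on flats sharing a geodesic and then invoke connectedness of the incidence graph of $l$-regular flats via the building structure --- is precisely the Burns--Spatzier argument, and is already more detailed than anything the paper itself supplies.
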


Hence $d_X(\Phi(q_1), \Phi(q_2))=\lambda d(q_1, q_2)$ for any $q_1, q_2 \in \widetilde{M}$. And obviously $\lambda \neq 0$. Hence we have an isometry between $(\widetilde{M}, F)$ and $(X,F^*)$ up to a renormalization. Hence we have proved Main Theorem \ref{higherrank}.

\ \
\\[-2mm]
\textbf{Acknowledgement.} This paper is a part of my Ph.D. thesis and I would like to devote it to Anatole Katok, whose encouragement and enthusiasm are always precious to me. I also thank Federico Rodriguez Hertz for helpful conversations, and Vladimir S. Matveev, Zolt\'{a}n I. Szab\'{o} and the referees for many valuable suggestions.  This work is partially supported by NSFC Nos. 11701559 and 11571387.

\end{document}